\def\eref#1{(\ref{#1})}
\def\a{{\alpha}}
\newtheorem{theorem}{Theorem}[section]
\newtheorem{lemma}[theorem]{Lemma}
\newtheorem{remark}[theorem]{Remark}
\newtheorem{corollary}[theorem]{Corollary}
\newtheorem{example}[theorem]{Example}
\newtheorem{definition}[theorem]{Definition}
\def\eref#1{(\ref{#1})}
\newcommand{\br}{\mathbf{r}}
\newcommand{\bc}{\mathbf{c}}
\newcommand{\Op}{\mathcal{O}}
\newcommand{\id}{{\rm id}}
\newcommand{\pq}{\preceq}
\tikzset{use/.style={}}
\tikzset{rectangle/.append style={circle, fill=blue!25}}
\newcommand{\PosetInsertion}[3]{
    \ensuremath{
    \mathop{
    \begin{tikzpicture}[scale=0.25]
        \ifnum#1=1 \fill[black!75] (0,0.5) rectangle (0.5,1); \fi
        \ifnum#2=1 \fill[black!75] (0,0) rectangle (0.5,0.5); \fi
        \ifnum#3=1 \fill[black!75] (0.5,0) rectangle (1,0.5); \fi
        \draw[thin](0,0.5) rectangle (0.5,1);
        \draw[thin](0,0) rectangle (0.5,0.5);
        \draw[thin](0.5,0) rectangle (1,0.5);
    \end{tikzpicture}
    }}
}
\newcommand{\PosetComposition}[2]{
    \,
    \begin{tikzpicture}[scale=0.25]
        \ifnum#1=1 \fill[black!75] (0,0.5) rectangle (0.5,1); \fi
        \ifnum#2=1 \fill[black!75] (0.5,0) rectangle (1,0.5); \fi
        \draw[thin](0,0.5) rectangle (0.5,1);
        \draw[thin](0.5,0) rectangle (1,0.5);
    \end{tikzpicture}
    \,
}
\newcommand{\PosetMatrices}{\mathcal{PM}}
\newcommand{\SG}[1]{\textcolor{black}{#1}}
\begin{document}

\title{Operad Structure of Poset Matrices}

\author{Arnauld Mesinga Mwafise$^{b}$, Gi-Sang Cheon$^{a, b}$, Hong Joon Choi$^{b}$, Samuele Giraudo$^{c}$\\
{\footnotesize $^a$ \textit{Department of Mathematics, Sungkyunkwan
University (SKKU), Suwon 16419, Rep. of Korea}}\\
{\footnotesize $^{b}$ \textit{Applied Algebra and Optimization
Research Center, SKKU, Suwon 16419, Rep. of Korea}}\\
 {\footnotesize $^{c}$ \textit{Universit\'e du Qu\'ebec \'a Montr\'eal (UQAM), LACIM, Pavillon Pr\'esident-Kennedy, Montr\'eal, H2X 3Y7, Canada
}}\\
{\footnotesize arnauldmesinga@gmail.com, gscheon@skku.edu, h.j.choi@skku.edu, and giraudo.samuele@uqam.ca
}}

\thanks{This work was partially supported by the National Research Foundation of Korea (NRF) Grant funded by the Korean Government (MSIP)(2016R1A5A1008055).
 G.-S. Cheon was also partially supported by the NRF-2019R1A2C1007518.}
\date{}
\maketitle

\providecommand{\msc}[1]
{
  \small	
  \textbf{\textit{MSC classes---}} #1

}

\providecommand{\keywords}[1]
{
  \small	
  \textbf{\textit{Keywords---}} #1

}

\begin{abstract}
This paper examines operad structures derived from poset matrices by formulating a set of new construction rules for poset matrices. In this direction, eleven different partial composition operations will be introduced as the basis for the construction of poset matrices of any given size by extending the combinatorial setting of species of structures to poset matrices. Three of these partial composition operations are shown to define an operad structure for poset matrices. The structural properties of poset matrices and their duals are then studied based on their associated operad constructions.
\end{abstract}

\msc{06A06, 06A07, 15A36, 18D50}
\keywords{operads, poset matrices, combinatorial species}

\section{Introduction}

{\it Operads} \cite{mendez,may} are algebraic structures modeling the composition of operators and which also encode multilinear structures.  In most cases, it involves the possible ways of gluing together two or more disjoint structures of the same type to form a new structure of that same type, which can lead to significantly new algebraic and enumerative combinatorial interpretations. In this sense, two operators $x$ and $y$ can be composed at the $i^{th}$ position by grafting the output of $y$ on the input of $x$ to obtain the new operator $x\circ_i y.$ In addition, the inputs of $x$ can also be permuted for the case of the symmetric operads. More formally, a \textit{set operad} is a collection $\mathcal{O}:=\bigsqcup\limits_{n\ge 1}\mathcal{O}(n)$ together with partial composition maps $\circ_i$ such that
\begin{equation}\label{operad}
\circ_i:\mathcal{O}(n)\times \mathcal{O}(m)\to \mathcal{O}(n+m-1),\qquad n,m\geq 1,i\in [n],
\end{equation}
and for any  $x\in\mathcal{O}(n),y\in\mathcal{O}(m),z\in\mathcal{O}(k)$ the following properties must be satisfied:
\begin{itemize}
\item[{\rm (i)}] $\left(x\circ_i y\right)\circ_{i+j-1}z =x\circ_i(y\circ_j z)$,\; $i\in[n],j\in[m]$;
\item[{\rm (ii)}] $\left(x\circ_i y\right)\circ_{j+m-1} z = \left(x\circ_j z\right)\circ_i y$, \; $i<j\in[n]$;
\item[{\rm (iii)}] $\id \circ_1 x = x\circ_i \id = x$\; $i\in[n],$ with the identity element $id\in\mathcal{O}(1)$ called the unit of $\mathcal{O}.$
\end{itemize}

Operads derived from finite posets represented by their Hasse diagrams which takes the form of a transitively reduced directed acyclic graph has been recently studied by Fauvet et. al. \cite{fauvet} and Giraudo \cite{giraudo,giraudoAS}. The importance of posets and operads in the fields of combinatorics and computer science has been well studied for over four decades from a broad range of perspectives. In this paper, we are interested in the operad structures of finite posets in terms of their (0,1)-matrices which are generally called poset matrices. A survey of relevant research work shows that poset matrices has not yet been studied in relation to operads. This has served as a motivation to reexamine the poset operads obtained from the paper in \cite{fauvet} in terms of poset matrices. The results from this paper are expected to open new research directions of working with poset matrices from a combinatorial and an algebraic context which could not be previously achieved.

More specifically, let $P = (X,\pq)$ be a partially ordered set (poset) on a finite or an infinite set $X$. From now on, it will be assumed that $X=[n]=\{1,\ldots,n\}$ or $X={\mathbb N}$. We say that a partial order $\pq$ on $X$ is \emph{natural} if $x\pq y$ implies $x\leq y$. As pointed out by Dean and Keller in \cite{Dean}, every partial ordering
of a finite set is isomorphic to a natural partial ordering. This is a consequence
of Szpielrajn's Theorem which states that every partial ordering of a set $X$ may be refined to a total ordering. These natural ordered posets on $X$ are in bijection with $|X|\times |X|$ binary (possibly infinite) lower triangular matrices with ones on the main diagonal, i.e., {\it unit} lower triangular  matrices that contain no $(\begin{smallmatrix}1&1\\0&1\end{smallmatrix})$ submatrix whose upper right entry is on the main diagonal (see \cite{Bevan}). We note that these binary lower triangular matrices $(a_{ij})$ satisfy transitivity of their entries i.e., if $a_{ij}=1$ and $a_{jk} = 1$ then $a_{ik}=1$. Moreover, a (0,1)-triangular matrix with entries 1s in the main diagonal is clearly reflexive and antisymmetric in the sense that $a_{ii}=1$ for all $i\in [n]$ and if $a_{ij}=1$ then $a_{ji}=0$ respectively. In this sense, these matrices are called {\textit{poset matrices}}.  Therefore, such a unit (0,1)-triangular matrix is a poset matrix if it is transitive.
For $n\times n$ poset matrices $A$ and $B$, $A$ is said to be {\it permutation equivalent} to $B$ if there is a permutation matrix $Q$ such that $A=Q^TBQ$. Thus, the number of non-isomorphic posets on $n$ set $[n]$ is equal to the number of all $n\times n$ poset matrices up to permutation.
\noindent It is well known that a poset is connected if the Hasse diagram of its directed graph is connected. For the purpose of this work, we make the following definition categorizing poset matrices in terms of their connectivity with respect to their corresponding subposet matrix structures as follows.
\begin{definition}\label{disconnectMAT}
{\rm Let $A=[a_{i,j}]$ be an $n\times n$ poset matrix. If there exist at least one subposet matrix of $A$ defined on a  non-empty subset $\alpha$ of $[n]$ such that for each $k\in \alpha,$  $a_{i,k}=0$ and $a_{k,j}=0$ whenever $i,j\notin\alpha,$ then we refer to $A$ as a \textit{disconnected poset matrix.} Otherwise, we say that $A$ is a \textit{connected poset matrix}.}
\end{definition}

The rest of the paper is organized as follows. Section 2 is devoted to the construction  of operad structures by using poset matrices. In addition, several new structural properties of poset matrices arising from their operad constructions will be highlighted. Finally, in Section 3, further structural properties  of poset matrix operads are reexamined  in terms of their dual poset matrices.

\section{Structural properties of operads from poset matrices}

We begin by introducing our main approach of constructing poset matrices by explicitly outlining the partial compositions $\circ_i$ defined on the combinatorial species settings  of poset matrices. Some of these partial compositions results to an operad structure on poset matrices. We denote by $\mathcal{PM}(n)$ the set of all $n\times n$ poset matrices and let $$\mathcal{PM}=\bigsqcup\limits_{n\ge 1}\mathcal{PM}(n).$$

For the construction of operads on the species $\mathcal{PM}$ of poset matrices, we shall use some notations for an $n\times n$ matrix $A=[a_{ij}]$. Let $\alpha=\{i_1,\ldots,i_h\}$ and $\beta=\{j_1,\ldots,j_k\}$ be nonempty subsets of $[n]$. Then $A[\alpha\mid\beta]:=A[i_1,\ldots,i_h\mid j_1,\ldots,j_k]$
denotes the $h\times k$ submatrix of $A$ obtained from $A$ by taking rows $i_1,\ldots,i_h$ and columns $j_1,\ldots,j_k$. If $\alpha=\beta$, we simplify the notation to $A[\alpha]$.

For $i\in[n]$, let $\alpha_{i}=\{1,\ldots,i-1\}$ and $\beta_{n-i}=\{i+1,\ldots,n\}$ where $\alpha_{1},\beta_0$ are empty sets. For brevity, set
\begin{eqnarray}\label{e:notation}
  A_{11}:=A[\alpha_{i}],\;\; A_{22}:=A[\beta_{n-i}],\;\; A_{21}:=A[\beta_{n-i}|\alpha_{i}],\;\;A_{(i)}:=A[i|\alpha_{i}],\;\;{\rm and}\;\; A^{(i)}:=A[\beta_{n-i}|i],
\end{eqnarray}
then $A$ can be written as
\begin{eqnarray}\label{e:A}
A=\begin{pmatrix}
A_{11}&\vline&\mathbb{O}&\vline&\mathbb{O}\\
\hline
A_{(i)}&\vline &a_{ii}&\vline &\mathbb{O}\\
\hline
A_{21}&\vline&A^{(i)}&\vline&A_{22}
\end{pmatrix},
\end{eqnarray}
where $a_{ii}=1$, $A_{(i)}=(a_{i,1},\ldots,a_{i,i-1})$ and $A^{(i)}=\begin{pmatrix}
a_{i+1,i}\\\vdots\\a_{n,i}
 \end{pmatrix}.$
As usual, we also denote the $s\times t$ matrix of all zeroes by $\mathbb{O}_{s,t}$ and the $s\times t$ matrix of all ones by $\mathbb{J}_{s,t}$, whenever the size can be understood from these two context we simply denote it by $\mathbb{O}$ and $\mathbb{J}$ respectively.

Now let $A\in\mathcal{PM}(n)$ and $B\in\mathcal{PM}(m)$. For an $i\in[n]$, define $A\circ_i B$ to be a (0,1)-matrix of order $n+m-1$ obtained from $A$ in \eref{e:A} by substituting $a_{ii}$ with the $m\times m$ matrix $B$, $A_{(i)}$ with a $m\times (i-1)$ (0,1)-matrix $U_i$, and $A^{(i)}$ with a $(n-i)\times m$ (0,1)-matrix $V_i$, respectively as follows:
\begin{eqnarray}\label{e:partial}
A\circ_i B=\begin{pmatrix}
A_{11}&\vline &\mathbb{O}&\vline&\mathbb{O}\\
\hline
U_i&\vline &B&\vline&\mathbb{O}\\
\hline
A_{21}&\vline &V_i&\vline&A_{22}\\
\end{pmatrix},
\end{eqnarray}
where $U_i$ and $V_i$ are some (0,1)-matrices of sizes $m\times (i-1)$ and $(n-i)\times m$, respectively. Moreover, $A_{11}$, $A_{21}$ or $A_{22}$ may be vacuous by virtue of having no rows or no columns.

Clearly, $A\circ_i B$ is a lower triangular matrix with ones on the main diagonal and $A_{11},B, A_{22}$ are poset matrices. In the following theorem, the submatrices $U_i$ and $V_i$ of $A\circ_i B$ are determined for which $A\circ_i B$ is a poset matrix. From now on, we denote the all-ones vector of length $m$ by ${\mathbbm{1}}_m=(1,\ldots,1)$, and the Kronecker product $\otimes$ of an $m\times n$ matrix $A$ and a $p\times q$ matrix $B$ is the $mp\times nq$ matrix defined by $A\otimes B=[a_{ij}B]$. Thus,
\begin{eqnarray}\label{e:UV}
{\mathbbm{1}}_m^T\otimes A_{(i)}=\begin{pmatrix} A_{(i)}\\\vdots\\A_{(i)}\end{pmatrix}\;\;\;{\text{and}}\;\;\;
{\mathbbm{1}}_{m}\otimes A^{(i)}=\left(A^{(i)}\;\cdots\;A^{(i)}\right).
\end{eqnarray}
are $m\times (i-1)$ (0,1)-matrix and $(n-i)\times m$ (0,1)-matrix, respectively.

\begin{theorem}\label{thm1} Let $A\in{\mathcal PM}(n)$,$B\in{\mathcal PM}(m),$ and let $C$ be the matrix defined as follows:
\begin{itemize}
\item[{\rm(a)}] If $U_i=\mathbb{J}$, $V_i=\mathbb{J}$, $A_{21}=\mathbb{J},$ then
\SG{$C=A \PosetInsertion{1}{1}{1}_i B$}.
\item[{\rm(b)}] If  $U_i=\mathbb{O}$, $V_i=\mathbb{O}$, $A_{21}=\mathbb{J},$ then
\SG{$C=A \PosetInsertion{0}{1}{0}_i B$}.
\item[{\rm(c)}] If $U_i=\mathbb{J}$, $V_i=\mathbb{O}$, $A_{21}=\mathbb{J},$ then
\SG{$C=A \PosetInsertion{1}{1}{0}_i B$}.
\item[{\rm(d)}] If $U_i=\mathbb{O}$, $V_i=\mathbb{J}$, $A_{21}=\mathbb{J},$ then
\SG{$C=A \PosetInsertion{0}{1}{1}_i B$}.
\item[{\rm(e)}] If $U_i=\mathbb{O}$, $V_i=\mathbb{O}$, $A_{21}=\mathbb{O},$ then
\SG{$C=A \PosetInsertion{0}{0}{0}_i B$}.
\item[{\rm(f)}] If $U_i=\mathbb{O}$, $V_i=\mathbb{J}$, $A_{21}=\mathbb{O},$ then
\SG{$C=A \PosetInsertion{0}{0}{1}_i B$}.
\item[{\rm(g)}]If $U_i=\mathbb{J}$, $V_i=\mathbb{O}$, $A_{21}=\mathbb{O},$ then
\SG{$C=A \PosetInsertion{1}{0}{0}_i B$}.
\end{itemize}
In all cases $C$ is a poset matrix.
\end{theorem}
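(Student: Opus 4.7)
The plan is to reduce the assertion that $C$ is a poset matrix to pure transitivity. By construction, $C$ is unit lower triangular: the diagonals of $A_{11}$, $B$, $A_{22}$ fill the diagonal of $C$, and the three super-diagonal blocks in \eref{e:partial} are zero. Hence the forbidden submatrix $\bigl(\begin{smallmatrix}1&1\\0&1\end{smallmatrix}\bigr)$ with its upper-right entry on the main diagonal cannot occur, and it suffices to verify that $c_{ab}=c_{bc}=1$ implies $c_{ac}=1$ for every admissible triple of indices in each of the seven cases.

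I would organise the check by partitioning the index set of $C$ into the three consecutive blocks $I_1=\{1,\dots,i-1\}$, $I_2=\{i,\dots,i+m-1\}$, $I_3=\{i+m,\dots,n+m-1\}$ associated to \eref{e:partial}. Since $C$ is lower triangular, every instance of the transitivity hypothesis forces $a\ge b\ge c$ both componentwise and in block order, giving exactly ten admissible block-profiles for $(a,b,c)$. Four of them, namely $(1,1,1)$, $(3,1,1)$, $(3,3,1)$, $(3,3,3)$, involve only entries of $A_{11}$, $A_{21}$, and $A_{22}$ and so are instances of transitivity in the poset matrix obtained from $A$ by deleting row and column $i$; they hold automatically from $A\in\mathcal{PM}(n)$. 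The profile $(2,2,2)$ is transitivity in $B\in\mathcal{PM}(m)$.

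The heart of the proof lies in the remaining five ``mixed'' profiles $(2,1,1)$, $(2,2,1)$, $(3,2,1)$, $(3,2,2)$, $(3,3,2)$, which involve at least one entry of $U_i$, $V_i$, or $B$. Each is settled by a uniform dichotomy exploiting the fact that in all seven cases $U_i$, $V_i$, and $A_{21}$ lie in $\{\mathbb{O},\mathbb{J}\}$: either the hypothesis of the implication contains an entry of an $\mathbb{O}$-block (and is therefore vacuous), or its conclusion lies inside a $\mathbb{J}$-block (and is therefore tautological). Concretely, profiles $(2,1,1)$ and $(2,2,1)$ are disposed of by $U_i\in\{\mathbb{O},\mathbb{J}\}$ alone, and profiles $(3,2,2)$ and $(3,3,2)$ by $V_i\in\{\mathbb{O},\mathbb{J}\}$ alone. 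The only genuinely informative profile is $(3,2,1)$, which reads $(V_i)_{ab}(U_i)_{bc}=1\Rightarrow (A_{21})_{ac}=1$; this is vacuous whenever at least one of $U_i,V_i$ equals $\mathbb{O}$ (so in cases (b)--(g) nothing further is needed), while in case (a), where $U_i=V_i=\mathbb{J}$, the conclusion is granted by the standing hypothesis $A_{21}=\mathbb{J}$.

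The expected main obstacle is not conceptual depth but systematic bookkeeping: one has to fill a $7\times 5$ table indexed by (case, mixed-profile) and verify in each of its cells that at least one of the two mechanisms above applies. A single such pass confirms that the seven prescriptions (a)--(g) are precisely the configurations for which no obstruction to transitivity arises, which establishes that $C$ is a poset matrix in every case.
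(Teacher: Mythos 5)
Your proof is correct and follows essentially the same approach as the paper's: the paper's (one-line) argument likewise reduces everything to the observation that the only obstruction to transitivity is the configuration $U_i=\mathbb{J}$, $V_i=\mathbb{J}$, $A_{21}=\mathbb{O}$, which is exactly your block-profile $(3,2,1)$, and that all seven listed cases avoid it. Your version merely makes explicit the block-by-block bookkeeping (the ten profiles and the vacuous-or-tautological dichotomy) that the paper asserts with the word ``clearly.''
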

\begin{proof} Let $i\in[n]$, and assume that $U_i,V_i$ and $A_{21}$ can either be the zero matrix $\mathbb{O}$ or the all ones matrix $\mathbb{J}.$ Clearly, all the $7$ cases (a)--(g) of assigning $\mathbb{O}$ and $\mathbb{J}$ to  $U_i,V_i$ and $A_{21}$  avoid the forbidden pattern for non-transitivity of the entries of a poset matrix whereby $U=\mathbb{J}$, $V=\mathbb{J}$, and $A_{21}=\mathbb{O}.$
\end{proof}

More generally, we have the following theorem.

\begin{theorem}\label{thm1} Let $A\in\mathcal{PM}(n)$, $B\in\mathcal{PM}(m)$ and  $\square_i:{\cal PM}(n)\times{\cal PM}(m)\rightarrow {\cal PM}(n+m-1)$ be partial composition maps  for  $i\in[n]$ defined by
\begin{align}\label{eq1}
A\square_i B =
\left[\begin{array}{c|c|c}
A_{11} & \mathbb{O}& \mathbb{O}\\
\hline
{\mathbbm{1}}_m^T\otimes A_{(i)} & B & \mathbb{O}\\
\hline
A_{21} & {\mathbbm{1}}_{m}\otimes A^{(i)} & A_{22}
\end{array}\right].
\end{align}
\end{theorem}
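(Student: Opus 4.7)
The plan is to verify that the matrix $C := A \square_i B$ defined in \eqref{eq1} lies in $\mathcal{PM}(n+m-1)$; that is, that it is a poset matrix. By the characterization recalled in the introduction, this requires two things: that $C$ be unit lower triangular, and that $C$ be transitive. The first is immediate from the block form \eqref{eq1}, since the three blocks strictly above the block diagonal are zero and each of the diagonal blocks $A_{11}$, $B$, $A_{22}$ is itself unit lower triangular by hypothesis.

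For transitivity, I would partition the rows and columns of $C$ into three consecutive intervals $L := \{1,\ldots,i-1\}$, $M := \{i,\ldots,i+m-1\}$, and $U := \{i+m,\ldots,n+m-1\}$, and fix the index translations $p \in M \leftrightarrow s := p - i + 1 \in [m]$ (indexing $B$) and $p \in U \leftrightarrow s := p - m + 1 \in \{i+1,\ldots,n\}$ (indexing $A$). The key observation, which I would state up front and use throughout, is that by the Kronecker product identities \eqref{e:UV} every row of $C[M \mid L]$ equals $A_{(i)}$ and every column of $C[U \mid M]$ equals $A^{(i)}$; thus, from the point of view of $L \cup U$, every index in $M$ inherits exactly the relations that the index $i$ has in $A$.

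With this setup, for any $p \geq q \geq r$ satisfying $c_{pq} = c_{qr} = 1$, I would verify $c_{pr} = 1$ by case analysis on the block triple $(X,Y,Z) \in \{L,M,U\}^3$ containing $(p,q,r)$. The three diagonal cases reduce directly to the transitivity of $A_{11}$, $B$, and $A_{22}$. Among the seven mixed cases, five reduce to the transitivity of $A$ after rewriting each entry of $C$ as an entry of $A$ via the translations above; for instance, in $(p,q,r) \in U \times M \times L$, writing $p = s + m - 1$ gives $c_{pq} = a_{s,i}$ and $c_{qr} = a_{i,r}$, whence $a_{s,r} = 1$ by transitivity of $A$ and $c_{pr} = (A_{21})_{s-i,\,r} = a_{s,r} = 1$. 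The two remaining cases $(M,M,L)$ and $(U,M,M)$ are immediate from the inheritance principle alone, since both $c_{pq}$ (or $c_{qr}$) and $c_{pr}$ read off the same row of $A_{(i)}$ or the same column of $A^{(i)}$.

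The main difficulty is organizational rather than conceptual: keeping track of the seven mixed cases and of their respective index translations. No single case requires more than one invocation of transitivity of $A$ or of $B$, so once the role played by \eqref{e:UV} is identified, the proof reduces to a careful enumeration.
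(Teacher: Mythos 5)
Your verification that $A\square_i B$ is a poset matrix is correct, and in fact more careful than the paper's: the paper disposes of transitivity in one sentence (``the transitivity of $M$ immediately follows from the transitivity of $A$ and $B$''), whereas your three-interval partition $L,M,U$ and the observation that every index of $M$ inherits the relations of $i$ via \eqref{e:UV} supplies the case analysis the paper leaves implicit. All ten block triples are accounted for and each reduces correctly either to transitivity of $A$, of $B$, or to the inheritance principle.

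However, there is a genuine gap: well-definedness is only the first step of the paper's proof. The theorem's conclusion (the sentence immediately following the displayed definition) is that $\square_i$ endows $\mathcal{PM}$ with an \emph{operad structure}, and the bulk of the paper's argument is the verification of the three operad axioms: the sequential associativity $(A\square_i B)\square_{i+j-1}C = A\square_i(B\square_j C)$, which requires computing both sides as $5\times 5$ block matrices and matching blocks such as ${\mathbbm{1}}_{m+k-1}^T\otimes A_{(i)}$ against the stacked blocks ${\mathbbm{1}}_{j-1}^T\otimes A_{(i)}$, ${\mathbbm{1}}_{k}^T\otimes A_{(i)}$, ${\mathbbm{1}}_{m-j}^T\otimes A_{(i)}$; the parallel associativity $(A\square_i B)\square_{j+m-1}C = (A\square_j C)\square_i B$ for $i<j$, which requires a further refinement of the row/column intervals by $\gamma=\{i+1,\ldots,j-1\}$; and the unit axiom for the $1\times 1$ matrix $[1]$. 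None of these appears in your proposal, and they do not follow from the poset-matrix property alone (indeed, Theorem~\ref{thmMinMaxPoset} exhibits a partial composition that always produces poset matrices yet fails associativity). To complete the proof you would need to carry out these block computations, for which your interval bookkeeping is a good starting point but is not by itself sufficient.
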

Then the partial compositions~$\square_i$ endows the species of poset matrices  $\SG{\PosetMatrices}$ with an operad structure.
\begin{proof} Let $M=A\square_i B$. First we show that $M$ is a poset matrix in ${\cal PM}(n+m-1)$ for any $i\in[n]$. Clearly, $M$ is a unit lower triangular matrix of order $n+m-1$. Noticing the matrices in \eref{e:UV},
the transitivity of $M$ immediately follows from the transitivity of $A$ and $B$. Thus $M$ is a poset matrix.

Now we show that $\square_i$ satisfies three axioms for operad. Let $A\in {\cal PM}(n)$, $B\in {\cal PM}(m)$, $C\in {\cal PM}(k)$, and assume that $i\in[n]$ and $j\in[m]$ are arbitrarily chosen.

(i) By definition we obtain
\begin{eqnarray*}
(A\square_i B)\square_{i+j-1}C=M\square_{i+j-1}C=\begin{pmatrix}
M_{11}&\vline &\mathbb{O}&\vline&\mathbb{O}\\
\hline
{\mathbbm{1}}_k^T\otimes M_{(i+j-1)}&\vline &C&\vline&\mathbb{O}\\
\hline
M_{21}&\vline &{\mathbbm{1}}_k\otimes M^{(i+j-1)}&\vline&M_{22}\\
\end{pmatrix},
\end{eqnarray*}
which is equivalent to
\begin{eqnarray*}\label{e:partial-1}
\begin{pmatrix}
A[\alpha_{i-1}] &\vline& \mathbb{O}&\vline & \mathbb{O}&\vline & \mathbb{O}&\vline & \mathbb{O}\\
\hline
	{\mathbbm{1}}_{j-1}^T\otimes A_{(i)}&\vline &B[\alpha_{j-1}]&\vline & \mathbb{O}&\vline & \mathbb{O} &\vline& \mathbb{O}\\
	\hline
	{\mathbbm{1}}_{k}^T\otimes A_{(i)}&\vline & {\mathbbm{1}}_{k}^T\otimes B_{(j)}&\vline & C &\vline& \mathbb{O}&\vline & \mathbb{O}\\
	\hline
	{\mathbbm{1}}_{m-j}^T\otimes A_{(i)}&\vline & B[\beta_{m-j}|\alpha_{j-1}]&\vline & {\mathbbm{1}}_{k}\otimes B^{(j)}&\vline & B[\beta_{m-j}]&\vline & \mathbb{O}\\
	\hline
	A[\beta_{n-i}|\alpha_{i-1}]&\vline & {\mathbbm{1}}_{j-1}\otimes A^{(j)} &\vline & {\mathbbm{1}}_{k}\otimes A^{(i)} &\vline& {\mathbbm{1}}_{m-j}\otimes A^{(i)}&\vline & A[\beta_{n-i}]
\end{pmatrix}
\end{eqnarray*}

\begin{eqnarray*}
=\begin{pmatrix}
A_{11}&\vline &\mathbb{O}&\vline&\mathbb{O}\\
\hline
{\mathbbm{1}}_{m+k-1}^T\otimes A_{(i)}&\vline &B\square_jC&\vline&\mathbb{O}\\
\hline
A_{21}&\vline &{\mathbbm{1}}_{m+k-1}\otimes A^{(i)}&\vline&A_{22}
\end{pmatrix}=A\square_i (B\square_jC).
\end{eqnarray*}
Thus $(A\square_i B)\square_{i+j-1}C=A\square_i (B\square_jC).$
\vskip.5pc
(ii) To show that $(A\square_i B) \square_{j+m-1} C = (A \square_j C) \square_i B$, let $1\leq i < j \leq n$ and
	\[\gamma=\{i+1,\ldots, j-1\}.
	\]
	Then
	\begin{align*}
		(A\square_i B) \square_{j+m-1} C &=
		\begin{pmatrix}
			A[\alpha_i] & \vline & \mathbb{O} & \vline & \mathbb{O}\\
			\hline
			{\mathbbm{1}}_m^T\otimes A_{(i)} & \vline & B & \vline & \mathbb{O}\\
			\hline
			A[\beta_{n-i}|\alpha_{i}] & \vline & {\mathbbm{1}}_{m}\otimes A^{(i)} & \vline & A[\beta_{n-i}]
	    \end{pmatrix} \square_{j+m-1}C\\
		&=
		\begin{pmatrix}
			A[\alpha_i] & \vline & \mathbb{O} & \vline & \mathbb{O} & \vline & \mathbb{O} & \vline & \mathbb{O}\\
			\hline
			{\mathbbm{1}}_m^T\otimes A_{(i)} & \vline & B & \vline & \mathbb{O} & \vline & \mathbb{O} & \vline & \mathbb{O}\\
			\hline
			A[\gamma|\alpha_i] & \vline & {\mathbbm{1}}_m\otimes A[\gamma|i] & \vline & A[\gamma] & \vline & \mathbb{O} & \vline & \mathbb{O}\\
			\hline
			{\mathbbm{1}}_k^T\otimes A[j|\alpha_i] & \vline & {\mathbbm{1}}_k^T \otimes {\mathbbm{1}}_m \otimes A_{ji} & \vline & {\mathbbm{1}}_k^T\otimes A[j|\gamma] & \vline & C & \vline & \mathbb{O}\\
			\hline
			A[\beta_{n-j}|\alpha_i] & \vline & {\mathbbm{1}}_m \otimes A[\beta_{n-j}|i] & \vline & A[\beta_{n-j}|\gamma] & \vline & {\mathbbm{1}}_k\otimes A^{(j)} & \vline & A[\beta_{n-j}]
		\end{pmatrix}\\
		&=\begin{pmatrix}
			A[\alpha_j] & \vline & \mathbb{O} & \vline & \mathbb{O}\\
			\hline
			{\mathbbm{1}}_k^T\otimes A_{(j)} & \vline & C & \vline & \mathbb{O}\\
			\hline
			A[\beta_{n-j}|\alpha_j] & \vline & {\mathbbm{1}}_k\otimes A^{(j)} & \vline & A[\beta_{n-j}]
	    \end{pmatrix} \square_i B\\
		&=(A \square_j C) \square_i B.
			\end{align*}

(iii) Since $[1]\square_1 A = A\square_i [1] = A$ for $A\in \mathcal{PM}(n)$ and $i\in[n]$, the $1\times 1$ poset matrix $[1]$ acts as the identity in ${\cal O}(1)$.

\noindent Therefore $\square_i$ endows the species of poset matrices $\cal PM$ with an operad structure.
\end{proof}

The following corollary is an immediate consequence of Theorem \ref{thm1}.
\begin{corollary}\label{rc} For $A\in{\cal PM}(n)$, let $u_i=A[i|\alpha_i]$ and $v_i=A[\beta_i|i]$ where $\alpha_i=\{1,\ldots,i-1\}$ and $\beta_i=\{i+1,\ldots,n\}$.
\begin{itemize}
\item  Assume that $A[\beta_i|\alpha_i]=\mathbb{J}_{n-i,i-1}.$
\begin{itemize}
\item[{\rm (a)}] If $u_i=(1,\ldots,1)$ and $v_i=(1,\ldots,1)^T$ then $A\square_i B =
\SG{A \PosetInsertion{1}{1}{1}_i B}$.
\item[{\rm (b)}] If $u_i=(0,\ldots,0)$ and $v_i=(0,\ldots,0)^T$ then $A\square_i B =
\SG{A \PosetInsertion{0}{1}{0}_i B}$.
\item[{\rm (c)}] If $u_i=(1,\ldots,1)$ and $v_i=(0,\ldots,0)^T$ then $A\square_i B =
\SG{A \PosetInsertion{1}{1}{0}_i B}$.
\item[{\rm (d)}] If $u_i=(0,\ldots,0)$ and $v_i=(1,\ldots,1)^T$ then $A\square_i B =
\SG{A \PosetInsertion{0}{1}{1}_i B}$.
\end{itemize}
\item  Assume that $A[\beta_i|\alpha_i]=\mathbb{O}_{n-i,i-1}.$
\begin{itemize}
\item[{\rm (e)}] If $u_i=(1,\ldots,1)$ and $v_i=(1,\ldots,1)^T$ then $A\square_i B =
\SG{A \PosetInsertion{0}{0}{0}_i B}$.
\item[{\rm (f)}] If $u_i=(0,\ldots,0)$ and $v_i=(1,\ldots,1)^T$ then $A\square_i B =
\SG{A \PosetInsertion{0}{0}{1}_i B}$.
\item[{\rm (g)}] If $u_i=(1,\ldots,1)$ and $v_i=(0,\ldots,0)^T$ then $A\square_i B =
\SG{A \PosetInsertion{1}{0}{0}_i B}$.
\end{itemize}
\end{itemize}
\end{corollary}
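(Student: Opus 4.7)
The plan is to prove the corollary by a direct unfolding of the operadic composition formula from Theorem~\ref{thm1}, combined with the elementary observation that the Kronecker product of a column/row of ones with a constant $0/1$ vector is precisely the all-zeros or all-ones matrix. No new structural argument is needed beyond matching the two notations; the work is entirely bookkeeping.

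First I would recall that, in the operadic composition $A\square_i B$ defined in~\eref{eq1}, the blocks occupying the positions of $U_i$ and $V_i$ in~\eref{e:partial} are given explicitly by
\begin{equation*}
U_i={\mathbbm{1}}_m^T\otimes A_{(i)}\qquad\text{and}\qquad V_i={\mathbbm{1}}_m\otimes A^{(i)},
\end{equation*}
while the block in the $A_{21}$ position is $A[\beta_{n-i}|\alpha_i]$ itself. Since by hypothesis $u_i=A_{(i)}$ and $v_i=A^{(i)}$, the definitions of the Kronecker products in~\eref{e:UV} give $U_i=\mathbb{J}_{m,i-1}$ when $u_i=(1,\ldots,1)$ and $U_i=\mathbb{O}_{m,i-1}$ when $u_i=(0,\ldots,0)$; analogously $V_i=\mathbb{J}_{n-i,m}$ or $\mathbb{O}_{n-i,m}$ according to whether $v_i$ is the all-ones or all-zeros column.

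Next I would run through the seven items one by one. In each case, the trio of hypotheses on $A[\beta_i|\alpha_i]$, $u_i$, and $v_i$ pins down the trio $(A_{21},U_i,V_i)$ occurring in $A\square_i B$, and by inspection this trio matches exactly the configuration characterizing one of the cases (a)--(g) of the first Theorem~\ref{thm1}. For instance, in case~(a) the assumptions $A[\beta_i|\alpha_i]=\mathbb{J}$, $u_i=(1,\ldots,1)$, $v_i=(1,\ldots,1)^T$ force $A_{21}=\mathbb{J}$, $U_i=\mathbb{J}$, $V_i=\mathbb{J}$, which is precisely the hypothesis of case~(a) of the first theorem, yielding $A\square_iB=A\,\PosetInsertion{1}{1}{1}_i B$. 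The remaining six cases are handled identically; the only point to keep track of is the distinction between the two blocks of the corollary, corresponding to $A[\beta_i|\alpha_i]=\mathbb{J}$ versus $\mathbb{O}$.

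I do not anticipate any genuine obstacle: the entire content of the corollary is that the operadic composition of Theorem~\ref{thm1} specializes, on inputs $A$ whose $i$-th row and column segments are constant and whose lower-left block is uniform, to the seven insertion operations previously introduced. The main care required is simply to keep the correspondence between the naming conventions in the corollary (where the $\mathbb{J}$ versus $\mathbb{O}$ nature of $A_{21}$ is used to split the list) and in the first Theorem~\ref{thm1} (where it is one of the three defining parameters), and to verify that the Kronecker formulas do produce the constant blocks as claimed.
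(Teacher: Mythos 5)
Your proposal is correct and matches the paper's intent exactly: the paper offers no written proof, stating only that the corollary is an immediate consequence of Theorem~\ref{thm1}, and your unfolding of the Kronecker-product blocks $U_i={\mathbbm{1}}_m^T\otimes A_{(i)}$, $V_i={\mathbbm{1}}_m\otimes A^{(i)}$ into $\mathbb{J}$ or $\mathbb{O}$ according to the constancy of $u_i$ and $v_i$ is precisely that immediate consequence made explicit. Nothing is missing.
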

%

A {\it minimal} element of a poset $P=(X,\leqslant)$ is an element $a$ such that $a >x$ for no $x\in X$; {\it maximal} elements are defined dually. Any finite subset $X$ of a poset $P$ has minimal and maximal elements. In the following lemma, this notion is interpreted from the perspective of the poset matrix.

\begin{lemma}\label{minmax} Let $A\in {\cal PM}(n)$ be a poset matrix associated with the poset $P$ naturally labelled in $[n]$.
\begin{itemize}
\item[{\rm(a)}] An element $i\in P$ is minimal if and only if $i=1$ or $A_{(i)}$ is the zero vector.
\item[{\rm(b)}]	An element $i\in P$ is maximal if and only if $i=n$ or $A^{(i)}$ is the zero vector.
\end{itemize}
\end{lemma}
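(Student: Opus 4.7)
The plan is to unpack the correspondence between the entries of $A$ and the order relation on $P$, and then read off minimality and maximality directly from the two distinguished vectors $A_{(i)}$ and $A^{(i)}$. I would first record that, since $P$ is naturally labelled, $a_{ij} = 1$ encodes $j \pq i$, and that $a_{ij} = 0$ automatically whenever $j > i$ by lower triangularity. In particular, the row $A_{(i)} = (a_{i,1},\ldots,a_{i,i-1})$ flags exactly the elements $j < i$ with $j \pq i$, while the column $A^{(i)} = (a_{i+1,i},\ldots,a_{n,i})^T$ flags exactly the elements $j > i$ with $i \pq j$.

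For part (a), the key step is to argue that $i$ is minimal iff no $j \in [n]\ssm\{i\}$ satisfies $j \prec i$. Because the labeling is natural, any such $j$ is forced to satisfy $j < i$, so the condition is vacuous when $i = 1$ (giving minimality immediately), and for $i > 1$ it reduces to $a_{ij} = 0$ for every $1 \leq j \leq i-1$, which is precisely the statement $A_{(i)} = \mathbb{O}$. Both directions of the equivalence drop out from this observation.

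Part (b) is then handled by the dual argument: $i$ is maximal iff no $j \in [n]\ssm\{i\}$ satisfies $i \prec j$, and naturality forces any such $j$ to satisfy $j > i$. Hence the condition is vacuous when $i = n$, and for $i < n$ it amounts to $a_{ji} = 0$ for every $i+1 \leq j \leq n$, i.e.\ $A^{(i)} = \mathbb{O}$.

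There is no substantive obstacle here; the only thing to be careful about is pinning down the convention that the below-diagonal entry $a_{ij}$ with $i > j$ encodes $j \pq i$ (and not the reverse), together with separating the boundary cases $i = 1$ and $i = n$, where the relevant vector $A_{(1)}$ or $A^{(n)}$ is vacuous. Beyond these bookkeeping points, the argument is a routine definition chase.
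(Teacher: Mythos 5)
Your proposal is correct and follows essentially the same route as the paper's own proof: unwind the definition of minimal (resp.\ maximal), use the naturality of the labelling to restrict attention to $j<i$ (resp.\ $j>i$), and identify the resulting condition with $A_{(i)}=\mathbb{O}$ (resp.\ $A^{(i)}=\mathbb{O}$), with the boundary cases $i=1$ and $i=n$ handled separately. Your additional care in pinning down the convention that $a_{ij}=1$ encodes $j\pq i$ is a sensible bit of extra bookkeeping but does not change the argument.
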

\begin{proof} (a) By definition, an element $i\in P$ is minimal if and only if there is no $j\in P$ such that $j<i$.
It follows that $i\in P$ is a minimal element if and only if $i=1$ or $a_{ij}=0$ for every $j$ with $i>j$, i.e. $A_{(i)}$ is the zero vector.

(b) Similarly, an element $i\in P$ is maximal if and only if there is no $j\in P$ such that $i<j$. It follows that
$i\in P$ is a maximal element if and only if $i=n$ or $a_{ji}=0$ for every $j$ with $i<j$, i.e. $A^{(i)}$ is the zero vector.
\end{proof}

\begin{example} {\rm Consider the poset matrix $A\in{\cal PM}(4)$ associated to the poset $P$:}
\begin{eqnarray*}
 A=\begin{pmatrix}
	1 & 0 & 0 & 0 \\
	1 & 1 & 0 & 0 \\
	0 & 0 & 1 & 0 \\
	1 & 0 & 1 & 1\\
\end{pmatrix}\quad\Leftrightarrow\quad
P: \quad
\begin{minipage}[c]{.20\textwidth}
\begin{tikzpicture}
  [scale=.7,auto=center,every node/.style={circle,fill=blue!20}]
\node(d1) at (9,1) {1};
\node(d6) at (9,3)  {2};
\node(d3) at (11,1)  {3};
\node(d5) at (11,3)  {4};
\draw (d6) -- (d1);
\draw (d3) -- (d5);
\draw (d1) -- (d5);
 \end{tikzpicture}
\end{minipage}
\end{eqnarray*}
{\rm Since $A_{(3)}=[0\;0]$ and $A^{(2)}=[0\;0]^T$, it follows from Lemma \ref{minmax} that the minimal elements of $P$ are 1, 3 and the maximal elements of
 $P$ are 2, 4, which are agree with the Hasse diagram of $P$ as shown above.}
\end{example}

\noindent In the following theorems, we similarly obtain two other operad structures of
$\SG{\PosetMatrices}$.

\noindent Firstly, we define:

${\rm Min}_i(A,B)=[v_1,\ldots,v_m]$ as the $(n-i)\times m$ matrix whose $j$th column $v_j$ is given by
\begin{equation}\label{min}
	v_j=
	\begin{cases}
		A^{(i)} & \text{if $j\in[m]$ is a minimal element of the poset associated to $B$},\\
		\mathbb{O} & \text{otherwise}.
	\end{cases}
\end{equation}

\begin{theorem}\label{thm3} Let $A\in\mathcal{PM}(n)$, $B\in\mathcal{PM}(m)$ $ and  \;\SG{\PosetComposition{0}{1}_i}:\SG{\PosetMatrices}(n)
\times \SG{\PosetMatrices}(m)\rightarrow \SG{\PosetMatrices}(n+m-1)$ be partial composition maps for $i\in[n]$ defined by
\begin{eqnarray}\label{eq2}
	A \SG{\PosetComposition{0}{1}_i} B =\begin{pmatrix}
		A_{11}&\vline & \mathbb{O} &\vline& \mathbb{O}\\
		\hline
	{\mathbbm{1}}_m^T\otimes A_{(i)}&\vline &B&\vline&\mathbb{O}\\
		\hline
	A_{21}&\vline & {\rm Min}_i(A,B) & \vline&A_{22}\\
	\end{pmatrix}.
\end{eqnarray}
Then the partial compositions~$\SG{\PosetComposition{0}{1}_i}$ endows the species of poset matrices  $\SG{\PosetMatrices}$ with an operad structure.
\end{theorem}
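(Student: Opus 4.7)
The plan is to verify (1) well-definedness of $A \PosetComposition{0}{1}_i B$ as a poset matrix, and (2) each of the three operad axioms. Since every block of $A \PosetComposition{0}{1}_i B$ except $\mathrm{Min}_i(A,B)$ coincides with the corresponding block of $A \square_i B$, I would import the computations carried out for the $\square_i$ operad wherever possible and concentrate the new work on the interaction between this block and the others.

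For well-definedness, unit-lower-triangularity is immediate from the block form \eqref{eq2}. The only transitivity checks not already handled by the $\square_i$ argument involve a nonzero entry inside $\mathrm{Min}_i(A,B)$. The decisive case is: suppose row $r$ of $A_{22}$ is recorded above column $j$ of $B$ in $\mathrm{Min}_i(A,B)$, and column $k$ of $B$ is recorded below column $j$ via $B$. I must show that row $r$ lies above column $k$ in the composed matrix. The first entry being $1$ forces $j$ to be minimal in $B$ and $A^{(i)}_r = 1$; the second forces $k \pq j$ in $B$, which together with minimality of $j$ gives $k = j$. Hence $k$ is itself minimal in $B$, and the relevant entry of $\mathrm{Min}_i(A,B)$ equals $A^{(i)}_r = 1$. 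All other transitivity requirements reduce to transitivity of $A$ or $B$, or are identical to those in the $\square_i$ analysis.

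For the identity axiom, $[1]$ acts on the left by filling only the central block, yielding $A$. On the right, $\mathbbm{1}_1^T \otimes A_{(i)} = A_{(i)}$ and $\mathrm{Min}_i(A,[1]) = A^{(i)}$, because the sole element of $[1]$ is minimal, again yielding $A$. For axiom (ii), when $B$ is inserted at $i$ and $C$ at $j > i$ in $A$, the two $\mathrm{Min}$-adjustments occur in disjoint column blocks, so the block comparison used for $\square_i$ carries over verbatim.

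The main obstacle is the associativity axiom (i). I would compute both sides as $(n+m+k-2) \times (n+m+k-2)$ block matrices with five column ranges coming from $A[\alpha_i]$, $B[\alpha_j]$, $C$, $B[\beta_{m-j}]$, $A[\beta_{n-i}]$, and compare block by block. All blocks except the one in position ($A_{22}$-rows, $C$-columns) coincide with those in the $\square_i$ associativity calculation. For this exceptional block, on the left-hand side, set $M = A \PosetComposition{0}{1}_i B$; the block equals $\mathrm{Min}_{i+j-1}(M,C)$ restricted to $A_{22}$-rows. A direct reading of $M^{(i+j-1)}$ shows that its $A_{22}$-part equals $A^{(i)}$ when $j$ is minimal in $B$ and $\mathbb{O}$ otherwise, by Lemma \ref{minmax} applied to $M$; hence the block has $A^{(i)}$ in column $k$ of $C$ iff $k$ is minimal in $C$ and $j$ is minimal in $B$. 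On the right-hand side, with $N = B \PosetComposition{0}{1}_j C$, the same block equals $\mathrm{Min}_i(A,N)$ restricted to $C$-columns, and the content of the computation becomes a characterization of which $C$-positions of $N$ are minimal in $N$. A row-by-row inspection of $N$ via Lemma \ref{minmax} yields exactly the same condition: a $C$-position $k$ is minimal in $N$ iff $C_{(k)} = 0$ and $B_{(j)} = 0$. The two sides therefore agree, completing associativity.
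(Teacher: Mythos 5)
Your proposal is correct and follows essentially the same route as the paper's proof: a block-by-block comparison of both sides of each axiom, importing the $\square_i$ computations for every block not touched by the ${\rm Min}$ construction. If anything, your handling of the one genuinely new block in associativity --- characterizing the $C$-columns of ${\rm Min}_i(A, B\SG{\PosetComposition{0}{1}_j} C)$ via the observation that a $C$-position $k$ is minimal in $B\SG{\PosetComposition{0}{1}_j} C$ precisely when $j$ is minimal in $B$ and $k$ is minimal in $C$ --- is more explicit than the paper's, which writes that block simply as ${\rm Min}_i(A,C)$ and leaves the dependence on the minimality of $j$ in $B$ implicit.
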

\begin{proof} Let $A\in {\cal PM}(n)$, $B\in {\cal PM}(m)$, and $C\in {\cal PM}(k)$. Assume that $i\in[n]$ and $j\in[m]$ are arbitrarily chosen. Note that ${\rm Min}_i(A,B)$ defined in (\ref{min}) represents the minimal elements of the poset ${\cal P}_X$ associated to the poset matrix $X$ for $X=A$ or $B$. Thus, using the similar argument used in the proof of Theorem \ref{thm1} yields
\begin{align*}
&(A \SG{\PosetComposition{0}{1}_i} B)
\SG{\PosetComposition{0}{1}_{i + j - 1}}
C\\&=\begin{pmatrix}
A[\alpha_{i}] &\vline&\mathbb{O}&\vline & \mathbb{O}&\vline & \mathbb{O}&\vline & \mathbb{O}\\
\hline
	{\mathbbm{1}}_{j-1}^T\otimes A_{(i)}&\vline &B[\alpha_{j}]&\vline & \mathbb{O}&\vline & \mathbb{O} &\vline& \mathbb{O}\\
	\hline
	{\mathbbm{1}}_{k}^T\otimes A_{(i)}&\vline & {\mathbbm{1}}_{k}^T\otimes B_{(j)}&\vline & C &\vline& \mathbb{O}&\vline & \mathbb{O}\\
	\hline
	{\mathbbm{1}}_{m-j}^T\otimes A_{(i)}&\vline & B[\beta_{m-j}|\alpha_{j}]&\vline & {\rm Min}_j(B,C)&\vline & B[\beta_{m-j}]&\vline & \mathbb{O}\\
	\hline
	A[\beta_{n-i}|\alpha_{i}]&\vline & {\rm Min}_i(A,B)[\beta_{n-i}|\alpha_{j}] &\vline & {\rm Min}_i(A,C) &\vline& {\rm Min}_i(A,B)[\beta_{n-i}|\beta_{m-j}]&\vline & A[\beta_{n-i}]
\end{pmatrix}\\
&=\begin{pmatrix}
A_{11}&\vline &\mathbb{O}&\vline&\mathbb{O}\\
\hline
{\mathbbm{1}}_{m+k-1}^T\otimes A_{(i)}&\vline &B\SG{\PosetComposition{0}{1}_j} C&\vline&\mathbb{O}\\
\hline
A_{21}&\vline &{\rm Min}_i(A,B \SG{\PosetComposition{0}{1}_j}C)&\vline&A_{22}
\end{pmatrix}\\
&=A\SG{\PosetComposition{0}{1}_i} (B\SG{\PosetComposition{0}{1}_j} C).
\end{align*}

\noindent To show that $(A\SG{\PosetComposition{0}{1}_i} B) \SG{\PosetComposition{0}{1}_{j +
m - 1}}
C = (A \SG{\PosetComposition{0}{1}_j} C) \SG{\PosetComposition{0}{1}_i} B$,
let $1\leq i < j \leq n$ and
\[\gamma=\{i+1,\ldots, j-1\}.\]
Then
\begin{align*}
	(A \SG{\PosetComposition{0}{1}_i} B) \SG{\PosetComposition{0}{1}_{j + m - 1}}
	C &=
	\begin{pmatrix}
		A[\alpha_i] & \vline & \mathbb{O} & \vline & \mathbb{O}\\
		\hline
		{\mathbbm{1}}_m^T\otimes A_{(i)} & \vline & B & \vline & \mathbb{O}\\
		\hline
		A[\beta_{n-i}|\alpha_i] & \vline & {\rm Min}_i(A,B) & \vline & A[\beta_{n-i}]
    \end{pmatrix} \SG{\PosetComposition{0}{1}_{j + m - 1}} C\\
	&=
	\begin{pmatrix}
		A[\alpha_i] & \vline & \mathbb{O} & \vline & \mathbb{O} & \vline & \mathbb{O} & \vline & \mathbb{O}\\
		\hline
		{\mathbbm{1}}_m^T\otimes A_{(i)} & \vline & B & \vline & \mathbb{O} & \vline & \mathbb{O} & \vline & \mathbb{O}\\
		\hline
		A[\gamma|\alpha_i] & \vline & {\rm Min}_i(A,B)[\gamma|\alpha_{m+1}] & \vline & A[\gamma] & \vline & \mathbb{O} & \vline & \mathbb{O}\\
		\hline
		{\mathbbm{1}}_k^T\otimes A[j|\alpha_i] & \vline & {\rm Min}_i(A,B)[j|\alpha_{m+1}] & \vline & {\mathbbm{1}}_k^T\otimes A[j|\gamma] & \vline & C & \vline & \mathbb{O}\\
		\hline
		A[\beta_{n-j}|\alpha_i] & \vline & {\rm Min}_i(A,B)[\beta_{n-j}|\alpha_{m+1}] & \vline & A[\beta_{n-j}|\gamma] & \vline & {\rm Min}_j(A,C) & \vline & A[\beta_{n-j}]
	\end{pmatrix}\\
	 &=
	\begin{pmatrix}
		A[\alpha_j] & \vline & \mathbb{O} & \vline & \mathbb{O}\\
		\hline
		{\mathbbm{1}}_k^T\otimes A_{(j)} & \vline & C & \vline & \mathbb{O}\\
		\hline
		A[\beta_{n-j}|\alpha_j] & \vline & {\rm Min}_j(A,C) & \vline & A[\beta_{n-j}]
    \end{pmatrix} \SG{\PosetComposition{0}{1}_i} B\\
	&=(A \SG{\PosetComposition{0}{1}_j} C) \SG{\PosetComposition{0}{1}_i} B.
	\end{align*}

 Moreover, $[1] \SG{\PosetComposition{0}{1}_1} A = A \SG{\PosetComposition{0}{1}_i} [1] = A$
 for $A\in \Op(n)$ and $i\in[n]$. Thus the $1\times 1$ poset matrix $[1]$ acts as the
identity in ${\cal O}(1)$. Therefore $\SG{\PosetComposition{0}{1}_i}$ endows the species of
poset matrices $\cal PM$ with an operad structure. \end{proof}

\noindent Secondly, we define:

${\rm Max}_i(A,B)=[u_1,u_2,\ldots,u_m]^T$ as the $m\times (i-1)$ matrix with $j$th row $u_j$ given by
\begin{equation*}
	{\bf u}_j=
	\begin{cases}
		A_{(i)} & \text{if $j\in[m]$ is a maximal element of the poset associated to $B$},\\
		\mathbb{O} & \text{otherwise}.
	\end{cases}
\end{equation*}
\begin{theorem}\label{thm4} Let $A\in\mathcal{PM}(n)$, $B\in\mathcal{PM}(m)$ and  \; $\SG{\PosetComposition{1}{0}_i}:\SG{\PosetMatrices}(n)
\times\SG{\PosetMatrices}(m)\rightarrow \SG{\PosetMatrices}(n+m-1)$ be the partial composition maps for $i\in[n]$ defined by
\begin{eqnarray}\label{eq3}
	A\SG{\PosetComposition{1}{0}_i} B =\begin{pmatrix}
		A_{11}&\vline & \mathbb{O} &\vline& \mathbb{O}\\
		\hline
	{\rm Max}_i(A,B)&\vline &B&\vline&\mathbb{O}\\
		\hline
	A_{21}&\vline & {\mathbbm{1}}_{m}\otimes A^{(i)} & \vline&A_{22}\\
	\end{pmatrix}.
\end{eqnarray}
Then the partial compositions~$\SG{\PosetComposition{1}{0}_i}$ endows the species of poset matrices  $\SG{\PosetMatrices}$ with an operad structure.
\end{theorem}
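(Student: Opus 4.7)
The plan is to adapt the argument of Theorem~\ref{thm3} using the duality between minimal and maximal elements: the Max construction in \eqref{eq3} is the row-versus-column mirror of the Min construction in \eqref{eq2}. The proof proceeds in four stages: first show that $A \SG{\PosetComposition{1}{0}_i} B \in \PosetMatrices(n+m-1)$, then verify the three operad axioms (i)--(iii).

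For the poset-matrix property, reflexivity, unit lower-triangularity, and antisymmetry are immediate from the block form. Transitivity need only be checked for chains crossing block boundaries, and the critical case involves $k \in \alpha_i$, some $p$ in the placed copy of $B$, and $\ell \in \beta_{n-i}$. The $\mathbbm{1}_m \otimes A^{(i)}$ block records $p \preceq \ell$ whenever $A_{\ell i} = 1$, while the Max rule records $k \preceq p$ precisely when $p$ is maximal in $B$ and $A_{ik} = 1$; transitivity of $A$ then forces $A_{\ell k} = 1$, which is exactly the content of the $A_{21}$ block. The remaining chains reduce directly to transitivity of $A$ or of $B$.

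For axiom (i), I would expand both $(A \SG{\PosetComposition{1}{0}_i} B) \SG{\PosetComposition{1}{0}_{i+j-1}} C$ and $A \SG{\PosetComposition{1}{0}_i} (B \SG{\PosetComposition{1}{0}_j} C)$ as $5 \times 5$ block matrices indexed by $\alpha_i$, the $\alpha_j$-rows of $B$, the placed copy of $C$, the $\beta_{m-j}$-rows of $B$, and $\beta_{n-i}$. Blocks not involving a Max row compare identically to the square-operad analysis of Theorem~\ref{thm1}. The essential new content is the block at the intersection of the $C$-rows and the $\alpha_i$-columns: on both sides, row $q$ equals $A_{(i)}$ when $q$ is maximal in $C$ and $j$ is maximal in $B$, and is $\mathbb{O}$ otherwise. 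This reduces to the combinatorial identity that the maximal elements of $B \SG{\PosetComposition{1}{0}_j} C$ are the maximal elements of $B$ distinct from $j$, together with the maximal elements of $C$ when (and only when) $j$ is itself maximal in $B$; in the opposite case the $\mathbbm{1}_k \otimes B^{(j)}$ block places the strict successors of $j$ in $B$ above every element of $C$, destroying their maximality.

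Axiom (ii) is structurally simpler since the two substitutions occur at disjoint indices $i < j$, so the Max blocks localize to their respective substitution points and the $5 \times 5$ block comparison of Theorem~\ref{thm3} transfers verbatim with Max replacing Min. Axiom (iii) is immediate: ${\rm Max}_1([1], A)$ and ${\rm Max}_i(A, [1])$ each collapse to $A_{(i)}$, so $[1] \SG{\PosetComposition{1}{0}_1} A = A \SG{\PosetComposition{1}{0}_i} [1] = A$. The main obstacle is the bookkeeping of the Max block under nested substitution captured by the combinatorial identity above; once that identity is in place, the rest of the proof is routine block algebra mirroring the Min case.
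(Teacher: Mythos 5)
Your proof is correct, but it takes a genuinely different route from the paper. The paper disposes of Theorem~\ref{thm4} in one line by invoking duality: Theorem~\ref{dualthm}(2) states that if $A = B \SG{\PosetComposition{0}{1}_i} C$ then $A^* = B^* \SG{\PosetComposition{1}{0}_{n-i+1}} C^*$, so the Max-composition is the conjugate of the Min-composition under the involution $X \mapsto X^*$ together with the index relabelling $i \mapsto n-i+1$, and the operad axioms are transported wholesale from Theorem~\ref{thm3}. You instead give a direct, self-contained verification that mirrors the proof of Theorem~\ref{thm3}, and your key new ingredient -- the identification of the maximal elements of $B \SG{\PosetComposition{1}{0}_j} C$ as the maximal elements of $B$ other than $j$ together with the maximal elements of $C$ precisely when $j$ is maximal in $B$ -- is exactly the Max-analogue of the Min bookkeeping that drives the paper's proof of Theorem~\ref{thm3}, and it is stated and justified correctly. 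Your approach buys a proof with no forward dependence on the duality machinery of Section~3 and makes the combinatorial content explicit; the paper's approach buys brevity at the cost of having to trust that conjugation by $*$ really transports all three axioms, including the index shift. Two small points of precision: in axiom (ii) the comparison is not quite ``verbatim,'' since the block sitting at the $C$-rows against the $B$-copy columns mixes the Max block of one substitution with the ${\mathbbm{1}}_m \otimes A^{(i)}$ block of the other (both sides reduce to $A_{ji}$ repeated over the maximal rows of $C$, so it does check out); and ${\rm Max}_1([1],A)$ is a vacuous $m \times 0$ block rather than a copy of $A_{(i)}$, though axiom (iii) holds either way.
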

 \begin{proof} It follows from Theorem \ref{thm3} and (2) of Theorem \ref{dualthm} in Section 4.
 \end{proof}

\begin{theorem}\label{thmMinMaxPoset} Let $A\in\mathcal{PM}(n)$, $B\in\mathcal{PM}(m)$ $ and  \;\SG{\PosetComposition{1}{1}_i}:\SG{\PosetMatrices}(n)
\times \SG{\PosetMatrices}(m)\rightarrow \SG{\PosetMatrices}(n+m-1)$ be the partial composition maps for $i\in[n]$ defined by
\begin{eqnarray}\label{eq22}
	A \SG{\PosetComposition{1}{1}_i} B =\begin{pmatrix}
		A_{11}&\vline & \mathbb{O} &\vline& \mathbb{O}\\
		\hline
	{\rm Max}_i(A,B)&\vline &B&\vline&\mathbb{O}\\
		\hline
	A_{21}&\vline & {\rm Min}_i(A,B) & \vline&A_{22}\\
	\end{pmatrix}.
\end{eqnarray}
Then the partial compositions~$\SG{\PosetComposition{1}{1}_i}$ results to poset matrices for all $i.$

\end{theorem}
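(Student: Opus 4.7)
The plan is to verify that $C := A \PosetComposition{1}{1}_i B$ is a poset matrix by checking unit lower triangularity and transitivity of the entries. Unit lower triangularity is immediate: the diagonal blocks $A_{11}$, $B$, $A_{22}$ are themselves unit lower triangular, and the blocks strictly above the diagonal (as prescribed in \eref{eq22}) are all zero. The entire substance of the proof therefore lies in showing that if $C[a,b]=1$ and $C[b,c]=1$, then $C[a,c]=1$ for all $a \geq b \geq c$ in $[1,n+m-1]$.

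The approach would be a case analysis on the triple of blocks containing $(a,b,c)$. I would partition $[1,n+m-1]$ into the consecutive intervals $T=[1,i-1]$, $M=[i,i+m-1]$, $L=[i+m,n+m-1]$, corresponding respectively to $\alpha_i$ in $A$, the rows of $B$, and $\beta_{n-i}$ in $A$ (shifted by $m-1$). Lower triangularity restricts the triple (block of $a$, block of $b$, block of $c$) to ten possibilities. The three pure cases follow directly from transitivity of $A$ or of $B$. The remaining seven mixed cases I would group as follows.

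When $b \in M$ but $\{a,c\} \not\subset M$, the constraint $C[a,b]=1$ or $C[b,c]=1$ involves the Min or Max block. By the definition of $\mathrm{Min}_i(A,B)$ and $\mathrm{Max}_i(A,B)$, this forces $b' := b-i+1$ to be minimal (resp. maximal) in $B$, and forces $A[a',i]=1$ or $A[i,c]=1$ in $A$ (where $a' = a-m+1$ is the preimage in $A$). The second hypothesis then either reduces to a $B$-comparability $c' \pq b'$ or $b' \pq a'$ against a minimal/maximal element (collapsing the case to $c=b$ or $a=b$), or it produces another comparability with $i$ in $A$, which by transitivity of $A$ yields $c' \pq a'$ (routing through $i$). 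In the latter situation the required entry $C[a,c]$ lies either in $A_{21}$, in $\mathrm{Min}_i(A,B)$, or in $\mathrm{Max}_i(A,B)$, and in each case the value just computed provides the needed $1$. The cases where $b$ is not in $M$ but $(a,c)$ spans multiple blocks reduce similarly, using the transitivity of $A$ applied to the underlying indices in $\alpha_i \cup \{i\} \cup \beta_{n-i}$.

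The main obstacle I expect is not any single case but the bookkeeping: correctly translating a global index into its preimage in $A$ or $B$ and identifying which entry of $A_{(i)}$ or $A^{(i)}$ is selected by the Min/Max construction. The most delicate single case is $(a,b,c) \in L \times M \times T$: here $C[a,b]=1$ forces $b'$ minimal in $B$ and $i \pq a'$ in $A$, while $C[b,c]=1$ forces $b'$ maximal in $B$ and $c \pq i$ in $A$, so $b'$ is an isolated element of $B$ and transitivity of $A$ through $i$ yields $c \pq a'$, giving $C[a,c] = A_{21}[a',c] = 1$. Once this case is handled cleanly, the other mixed cases are routine adaptations of the same pattern.
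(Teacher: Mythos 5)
Your proposal is correct and takes essentially the same route as the paper: both arguments observe that unit lower triangularity is immediate from the block shape and then verify transitivity case by case according to which of the three diagonal blocks the indices fall in, the only nontrivial cases being those passing through the $\mathrm{Min}_i(A,B)$ and $\mathrm{Max}_i(A,B)$ blocks. Your treatment of the $L\times M\times T$ case is in fact more careful than the paper's, which asserts that the $\mathrm{Min}$-column of a maximal element of $B$ is always zero (false when that element is isolated, i.e.\ simultaneously minimal and maximal), whereas you correctly note that both hypotheses can hold at once only for such an isolated element and then obtain the required entry of $A_{21}$ by routing transitivity through the index $i$ in $A$.
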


\begin{proof}


Assume that  $U_i={\rm Max}_i(A,B)$ and $V_i={\rm Min}_i(A,B)$ in (\ref{e:partial}) results to (\ref{eq22}). Let $u_{k,j}$ and $v_{y,z}$ denote an entries in the submatrices $U_i$ and $V_i$ respectively. Consider the  following possibilities of assigning entries to $U_i$ and $V_i$.
\begin{enumerate}
\item By definition  of ${\rm Max}_i(A,B)$ each index  $k$ associated to the maximal element of the poset matrix $B$ has as entry $u_{k,j}\in\{0,1\}.$ On the other hand, the entry $v_{y,k}=0$ for each $k$ associated to the maximal element of $B.$ Therefore, the transitivity condition of the entry indexed by the pair $(y,j)$ in the submatrix $A_{21}$ cannot be violated.

\item By definition  of ${\rm Min}_i(A,B)$ each index  $z$ associated to the minimal element of the poset matrix $B$ has as entry $v_{y,z}\in\{0,1\}.$ On the other hand, the entry $u_{z,j}=0$ for each $z$ associated to the minimal element of $B.$ Therefore, the transitivity condition of the entry indexed by the pair $(y,j)$ in the submatrix $A_{21}$ cannot be violated.

\item  Consider the case in which the index $k$ is not a maximal element associated to $B$ in the submatrix $U_i=(u_{k,j})$ or when the index $z$ is not a minimal element associated to $B$ in the submatrix  $V_i=(v_{y,z}).$ In both cases the entries $u_{k,j}$ and $v_{y,z}$ are zero. The transitivity of the output poset matrix of $A \SG{\PosetComposition{1}{1}_i} B $ cannot be violated.

\end{enumerate}

Since $A_{11}, B$ and $A_{22}$ are poset matrices in (\ref{eq22})  and the cases (1),(2) and (3) possible ways of assigning entries to the submatrices $U_i$ and $V_i$ do not violate the transitivity condition of poset matrices.  Thus, the output of $A \SG{\PosetComposition{1}{1}_i} B $ is a poset matrix for all $i.$

\end{proof}

\begin{example}\label{minmaxops} \text{Let} $A\in {\cal PM}(4)$, $B\in {\cal PM}(3)$ \text{and} $C\in {\cal PM}(2)$  \text{as follows}:
 \begin{align*}
	A =
	\left[\begin{array}{cccc}
		1 & 0 & 0 & 0 \\
		1 & 1 & 0 & 0 \\
		1 & 0 & 1 & 0 \\
		1 & 1 & 1 & 1
	\end{array}\right],\quad
	B =
	\left[\begin{array}{ccc}
		1 & 0 & 0 \\
		1 & 1 & 0 \\
		1 & 0 & 1
			\end{array}\right],\quad
	C =
	\left[\begin{array}{cc}
		1 & 0\\
		1 & 1
              \end{array}\right].
\end{align*}
Then
\begin{align*}
	A \square_2 B =
	\left[\begin{array}{c|ccc|cc}
		1 & 0 & 0 & 0 & 0 & 0 \\
		\hline
		1 & 1 & 0 & 0 & 0 & 0 \\
		1 & 1 & 1 & 0 & 0 & 0 \\
		1 & 1 & 0 & 1 & 0 & 0 \\
		\hline
		1 & 0 & 0 & 0 & 1 & 0 \\
		1 & 1 & 1 & 1 & 1 & 1
	\end{array}\right],
\end{align*}
\begin{align*}
	A \SG{\PosetComposition{0}{1}_2} B =
	\left[\begin{array}{c|ccc|cc}
		1 & 0 & 0 & 0 & 0 & 0 \\
		\hline
		1 & 1 & 0 & 0 & 0 & 0 \\
		1 & 1 & 1 & 0 & 0 & 0 \\
		1 & 1 & 0 & 1 & 0 & 0 \\
		\hline
		1 & 0 & 0 & 0 & 1 & 0 \\
		1 & 1 & 0 & 0 & 1 & 1
	\end{array}\right],\quad
	A \SG{\PosetComposition{1}{0}_2} B =
	\left[\begin{array}{c|ccc|cc}
		1 & 0 & 0 & 0 & 0 & 0 \\
		\hline
		0 & 1 & 0 & 0 & 0 & 0 \\
		1 & 1 & 1 & 0 & 0 & 0 \\
		1 & 1 & 0 & 1 & 0 & 0 \\
		\hline
		1 & 0 & 0 & 0 & 1 & 0 \\
		1 & 1 & 1 & 1 & 1 & 1
	\end{array}\right]   \quad  \text{and}   \quad  A \SG{\PosetComposition{1}{1}_2} B =
	\left[\begin{array}{c|ccc|cc}
		1 & 0 & 0 & 0 & 0 & 0 \\
		\hline
		0 & 1 & 0 & 0 & 0 & 0 \\
		1 & 1 & 1 & 0 & 0 & 0 \\
		1 & 1 & 0 & 1 & 0 & 0 \\
		\hline
		1 & 0 & 0 & 0 & 1 & 0 \\
		1 & 1 & 0 & 0 & 1 & 1
	\end{array}\right].
\end{align*}
\end{example}

\begin{remark}
Using the poset matrices $A, B,$ and $C$ as defined in example \ref{minmaxops}, it can be verified that when  $i=2,j=3$ the partial composition~$\SG{\PosetComposition{1}{1}}$ is not an operad as illustrated below.

\begin{align*}\left(A\SG{\PosetComposition{1}{1}_i} B\right)\SG{\PosetComposition{1}{1}_{i+j-1}}C =\left[\begin{array}{ccc|cc|cc}
		1 & 0 & 0 & 0 & 0 & 0 &0\\
		0 & 1 & 0 & 0 & 0 & 0&0 \\
		1 & 1 & 1 & 0 & 0 & 0 &0\\
                   \hline
		0 & 0 & 0 & 1 & 0 & 0& 0\\
	         1 & 1 & 0 & 1 & 1 & 0&0 \\
                  	\hline
		1 & 0 & 0 & 0 & 0 & 1&0\\
                   1 & 1 & 0 & 0 & 0 & 1&1
	\end{array}\right]\end{align*}

and  \begin{align*}A\SG{\PosetComposition{1}{1}_i}(B\SG{\PosetComposition{1}{1}_j} C)=\left[\begin{array}{c|cccc|cc}
		1 & 0 & 0 & 0 & 0 & 0 &0\\
                   \hline
		0 & 1 & 0 & 0 & 0 & 0&0 \\
		1 & 1 & 1 & 0 & 0 & 0 &0\\
                  0 & 0 & 0 & 1 & 0 & 0& 0\\
	         1 & 1 & 0 & 1 & 1 & 0&0 \\
                  	\hline
		1 & 0 & 0 & 0 & 0 & 1&0\\
                   1 & 1 & 0 & 1 & 0 & 1&1
	\end{array}\right].\end{align*}
\end{remark}
Since as  $\left(A\SG{\PosetComposition{1}{1}_i} B\right)\SG{\PosetComposition{1}{1}_{i+j-1}}C\neq$$  A\SG{\PosetComposition{1}{1}_i}(B\SG{\PosetComposition{1}{1}_j} C)$, it follows that  $\SG{\PosetComposition{1}{1}}$ is not an operad by the nested associativity axiom in \ref{operad}.
\begin{theorem}\label{dpm}
The partial composition operation $A\;\scalebox{0.7}{$\square$}_i\;B$ forms a disconnected poset matrix for all $i$ if and only if $A$ is a disconnected poset matrix.
\end{theorem}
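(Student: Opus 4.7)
I would prove the biconditional in two parts. For the reverse implication, ``$A \square_i B$ disconnected for all $i$'' $\Rightarrow$ ``$A$ disconnected'', I specialize $B$ to the operad unit $[1] \in \mathcal{PM}(1)$: axiom (iii) gives $A \square_i [1] = A$, so the hypothesis immediately forces $A$ itself to be disconnected. (As a sanity alternative one can use the contrapositive: when $A$ is connected every element of the $B$-block inherits the relations of $i$ in $A$, so the $B$-block cannot split off from the connected component of $i$.)

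For the forward direction, assume $A$ admits a proper non-empty separating subset $\alpha \subsetneq [n]$, meaning $a_{r,k} = 0 = a_{k,s}$ for all $k \in \alpha$ and $r, s \in \overline{\alpha} := [n] \setminus \alpha$. Fix $i \in [n]$ and $B \in \mathcal{PM}(m)$, and partition the index set $[n+m-1]$ of $A \square_i B$ into the three blocks
$L = \{1, \ldots, i-1\}$, $M_B = \{i, \ldots, i+m-1\}$ (the new $B$-positions), and $R = \{i+m, \ldots, n+m-1\}$. My candidate separating subset is
\[
\alpha' := \{k \in \alpha : k<i\} \cup \{k+m-1 : k \in \alpha,\; k>i\},
\]
augmented by $M_B$ if $i \in \alpha$; equivalently, I absorb the entire $B$-block $M_B$ into whichever side of the $\alpha / \overline{\alpha}$ partition contains $i$. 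Since both $\alpha$ and $\overline{\alpha}$ are non-empty, $\alpha'$ and its complement in $[n+m-1]$ are non-empty, making $\alpha'$ a legitimate candidate.

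The remaining task is to verify that $\alpha'$ satisfies Definition \ref{disconnectMAT} for $A \square_i B$, i.e.\ that $(A \square_i B)_{r',k'} = 0 = (A \square_i B)_{k',s'}$ for every $k' \in \alpha'$ and $r', s' \notin \alpha'$. Using the block form (\ref{eq1}), I case-split on which of $L, M_B, R$ each of $r', k', s'$ belongs to. When $r', k', s'$ are all drawn from $L \cup R$, the relevant entries come from the $A_{11}, A_{21}, A_{22}$ blocks and thus reduce directly to entries of $A$ between $\alpha$ and $\overline{\alpha}$, which vanish by hypothesis. When exactly one of the indices lies in $M_B$, the entries live in the blocks $\mathbbm{1}_m^T \otimes A_{(i)}$ or $\mathbbm{1}_m \otimes A^{(i)}$, hence reduce to $a_{i,\cdot}$ or $a_{\cdot,i}$; here the case split on $i \in \alpha$ versus $i \in \overline{\alpha}$ is exactly aligned with the inclusion of $M_B$ in $\alpha'$, so the required vanishing again follows from the separating property of $\alpha$ in $A$. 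The main obstacle is not depth but the systematic bookkeeping across these block combinations; each individual sub-case collapses to a single instance of the defining vanishing property in $A$.
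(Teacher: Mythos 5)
The paper states Theorem~\ref{dpm} without supplying any proof (the text moves directly to Lemma~\ref{dpmops}), so there is no argument of the authors' to compare yours against; your proposal effectively fills that gap. Your forward direction is sound: lifting a separating set $\alpha$ of $A$ to $\alpha'$ by the index shift $k\mapsto k$ for $k<i$ and $k\mapsto k+m-1$ for $k>i$, and absorbing the whole $B$-block into the side containing $i$, is exactly the right move, because every entry of $\mathbbm{1}_m^T\otimes A_{(i)}$ and $\mathbbm{1}_m\otimes A^{(i)}$ reduces to $a_{i,\cdot}$ or $a_{\cdot,i}$, so the case analysis collapses to the vanishing hypothesis on $\alpha$ as you say, and $\alpha'$ is visibly proper and non-empty.

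Two cautions on the converse. First, the argument via $B=[1]$ is only available if the theorem is read with $B$ universally quantified; if $B$ is regarded as fixed (and of size $m\ge 2$), you must fall back on your contrapositive. Second, that contrapositive sketch has a real gap: you assert that the $B$-block ``cannot split off from the connected component of $i$,'' but you do not treat the case in which a putative separating set $\alpha'$ of $A\,\scalebox{0.7}{$\square$}_i\,B$ cuts \emph{through} the $B$-block, placing some $B$-positions in $\alpha'$ and others outside it. In that case one cannot directly push $\alpha'$ down to a separating set of $A$; instead one must observe that, since every $B$-position carries the same external relations $a_{i,\cdot}$ and $a_{\cdot,i}$, having $B$-positions on both sides forces every index outside the $B$-block to be unrelated to $i$ in $A$, so that $\{i\}$ itself separates $A$ --- contradicting connectedness when $n\ge 2$. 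Adding that one observation makes the converse complete for fixed $B$; as written, the converse rests entirely on the $B=[1]$ specialization.
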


\begin{lemma}\label{dpmops}
Let $D_n$ denote the set of all disconnected poset matrices of order $n\times n.$ Then,
$D_n= \SG{P_{\PosetInsertion{0}{1}{0}_i}} \cup \SG{P_{\PosetInsertion{0}{0}{0}_i}} \cup
\SG{P_{\PosetInsertion{0}{0}{1}_i}} \cup \SG{P_{\PosetInsertion{1}{0}{0}_i}}$, where
$\SG{P_{\PosetInsertion{0}{1}{0}_i}}$, $\SG{P_{\PosetInsertion{0}{0}{1}_i}}$,
$\SG{P_{\PosetInsertion{1}{0}{0}_i}}$, and $\SG{P_{\PosetInsertion{0}{0}{0}_i}}$ correspond
to the set of all poset matrices derived from the partial composition operations
$\SG{\PosetInsertion{0}{1}{0}_i}$, $\SG{\PosetInsertion{0}{0}{1}_i}$,
$\SG{\PosetInsertion{1}{0}{0}_i}$, and $\SG{\PosetInsertion{0}{0}{0}_i}$ respectively.
\end{lemma}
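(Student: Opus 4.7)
The plan is to prove the equality by establishing both set inclusions.

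For the inclusion $P_{\PosetInsertion{0}{1}{0}_i} \cup P_{\PosetInsertion{0}{0}{0}_i} \cup P_{\PosetInsertion{0}{0}{1}_i} \cup P_{\PosetInsertion{1}{0}{0}_i} \subseteq D_n$, I would fix $C$ in any one of the four sets and exhibit a disconnecting subset directly from the block decomposition \eref{e:partial}. In the case $\PosetInsertion{0}{1}{0}$, the index interval $\{i,\ldots,i+m-1\}$ occupied by $B$ is isolated from its complement because $U_i=V_i=\mathbb{O}$, hence satisfies Definition~\ref{disconnectMAT}. In the case $\PosetInsertion{0}{0}{0}$, each of the blocks $A_{11}$, $B$, $A_{22}$ is isolated from the other two because $U_i=V_i=A_{21}=\mathbb{O}$. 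In the case $\PosetInsertion{0}{0}{1}$, the index set of $A_{11}$ is isolated by the simultaneous vanishing $U_i=A_{21}=\mathbb{O}$; symmetrically, in the case $\PosetInsertion{1}{0}{0}$, the index set of $A_{22}$ is isolated by $V_i=A_{21}=\mathbb{O}$. In each situation the exhibited subset witnesses Definition~\ref{disconnectMAT}, so the matrix belongs to $D_n$.

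For the reverse inclusion $D_n \subseteq P_{\PosetInsertion{0}{1}{0}_i} \cup P_{\PosetInsertion{0}{0}{0}_i} \cup P_{\PosetInsertion{0}{0}{1}_i} \cup P_{\PosetInsertion{1}{0}{0}_i}$, I would start from $C \in D_n$ with a disconnecting subset $\alpha$ guaranteed by Definition~\ref{disconnectMAT}, and construct a decomposition matching one of the four patterns. The key step is to extract a contiguous disconnecting interval $\{i,\ldots,i+m-1\}$ out of $\alpha$: beginning with a minimal disconnecting subset (a connected component of the Hasse graph of $C$), I would apply Lemma~\ref{minmax} together with the natural labeling to align this component as an initial, medial, or terminal block of $[n]$. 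Once the disconnecting interval is contiguous, set $B:=C[\{i,\ldots,i+m-1\}]$ and build the auxiliary matrix $A\in\PosetMatrices(n-m+1)$ whose $i$th vertex plays the role of the collapsed $B$-block; by the isolation of $\alpha$ the blocks $U_i$ and $V_i$ of \eref{e:partial} are forced to be $\mathbb{O}$, so the four cases are distinguished entirely by $A_{21}$: when $A_{21}=\mathbb{J}$ we land in $\PosetInsertion{0}{1}{0}$, and when $A_{21}=\mathbb{O}$ we land in $\PosetInsertion{0}{0}{0}$, refining to $\PosetInsertion{0}{0}{1}$ or $\PosetInsertion{1}{0}{0}$ whenever $A_{11}$ or $A_{22}$ respectively is itself disconnected from the remaining indices.

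The main obstacle I anticipate is the contiguity step: showing that a disconnecting subset of a naturally labelled poset matrix can always be chosen as a contiguous index interval, and that the resulting off-diagonal block $A_{21}$ is then necessarily uniform ($\mathbb{O}$ or $\mathbb{J}$). I would address this by induction on $n$, peeling off at each stage an extremal connected component that by Lemma~\ref{minmax} is witnessed by an index whose row $A_{(i)}$ (for a minimal element) or column $A^{(i)}$ (for a maximal element) vanishes, and exploiting the freedom in naturally labelling independent components to bring this component to one end of the index range so that the complementary submatrix is itself a poset matrix of strictly smaller order.
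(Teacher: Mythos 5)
Your first inclusion coincides with what the paper actually proves: the three cases with $A_{21}=\mathbb{O}$ yield a matrix of the block-diagonal form $\left(\begin{smallmatrix} G & \mathbb{O}\\ \mathbb{O} & H\end{smallmatrix}\right)$, and the case $U_i=V_i=\mathbb{O}$, $A_{21}=\mathbb{J}$ isolates the index interval occupied by $B$, so Definition~\ref{disconnectMAT} applies in each case. Up to that point the two arguments are essentially identical.

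The problem is the reverse inclusion. The paper's own proof silently omits it (it only shows that the four operations produce disconnected matrices), and your attempt to supply it runs into a gap that you correctly anticipate but do not overcome. Re-labelling the components of the Hasse graph to make a disconnecting subset contiguous replaces the matrix by a permutation-equivalent one, which cannot establish the literal set equality $D_n=\bigcup P$; and even when a disconnecting subset is already a contiguous interval, the residual block $A_{21}$ need not be uniform. Concretely, let $C\in\mathcal{PM}(5)$ be the naturally labelled poset matrix of the poset with relations $1\prec 2$, $4\prec 5$, $1\prec 5$ and isolated element $3$ (so $c_{21}=c_{51}=c_{54}=1$ and all other off-diagonal entries vanish). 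This matrix is disconnected via $\alpha=\{3\}$, but the only nontrivial contiguous union of components is $\{3\}$ itself, for which $A_{21}=C[\{4,5\}\mid\{1,2\}]=\left(\begin{smallmatrix}0&0\\1&0\end{smallmatrix}\right)$ is neither $\mathbb{O}$ nor $\mathbb{J}$; and since $c_{51}=1$, the matrix also fails to have the block-diagonal shape forced by the three cases with $A_{21}=\mathbb{O}$. Hence $C\in D_5$ lies in none of the four sets, the equality fails as literally stated, and at best one can claim that every disconnected poset matrix is permutation-equivalent to a matrix in the union; your induction would need to be recast in those terms.
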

\begin{proof}
The poset matrices derived from
\SG{$P_{\PosetInsertion{0}{0}{1}_i}$},
\SG{$P_{\PosetInsertion{1}{0}{0}}$},
and \SG{$P_{\PosetInsertion{0}{0}{0}_i}$}
are of the forms
$$\begin{pmatrix}
A_{11}&\vline &\mathbb{O}&\vline&\mathbb{O}\\
\hline
\mathbb{O}&\vline &B&\vline&\mathbb{O}\\
\hline
\mathbb{O}&\vline &\mathbb{O}&\vline&A_{22}\\
\end{pmatrix},\begin{pmatrix}
A_{11}&\vline &\mathbb{O}&\vline&\mathbb{O}\\
\hline
\mathbb{O}&\vline &B&\vline&\mathbb{O}\\
\hline
\mathbb{O}&\vline &\mathbb{J}&\vline&A_{22}\\
\end{pmatrix},\;{\rm and}\;
\begin{pmatrix}
A_{11}&\vline &\mathbb{O}&\vline&\mathbb{O}\\
\hline
\mathbb{J}&\vline &B&\vline&\mathbb{O}\\
\hline
\mathbb{O}&\vline &\mathbb{O}&\vline&A_{22}\\
\end{pmatrix},$$
which can all be expressible as
$$\left(
\begin{array}{c|c}
G &\mathbb{O} \\
\hline
\mathbb{O} & H
\end{array}
\right)$$ where $G$ and $H$ are poset matrices.

Similarly, the partial composition operation $\SG{\PosetInsertion{0}{1}{0}_i}$
generates only disconnected poset matrices since $A_{21}=\mathbb{F}$ and the entries of the submatrices  $U_i$ and $V_i$ in \ref{e:partial} are all $\rm 0$ for this case.
\end{proof}

The possible candidates for the input poset matrices to obtain a particular output poset matrix for the  partial composition operations described in (\ref{eq1}) can be determined as follows.

\begin{theorem}\label{inputoutputposet}
Let $C_{n+m-1}$ be a poset matrix of size $(n+m-1)\times (n+m-1)$ labeled on the set $X$, and let $A$ and  $B$ be poset matrices of sizes $n\geq 2$ and $m\geq 2$ respectively. Then $C_{n+m-1}=A\;\scalebox{0.7}{$\square$}_i\;B$ if and only if $A$ and $B$ are subposet matrices of $C_{n+m-1}$.
\end{theorem}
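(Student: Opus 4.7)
My approach is to analyze both directions of the equivalence directly from the block decomposition in \eref{eq1}, identifying $A$ and $B$ with principal submatrices of $C_{n+m-1}$ indexed by carefully chosen subsets.

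For the ``only if'' direction the plan is a direct read-off of \eref{eq1}. Setting $I_B := \{i, i+1, \ldots, i+m-1\}$ makes the principal submatrix $C[I_B]$ equal to the central diagonal block, which is $B$ by construction. Setting $I_A := \{1, \ldots, i\} \cup \{i+m, \ldots, n+m-1\}$ of cardinality $n$, I would check that $C[I_A]$ reproduces every block of $A$: the corners $A_{11}$, $A_{21}$, $A_{22}$ appear unaltered; $a_{ii}=1$ is recovered as the $(1,1)$-entry of the $B$-block; and $A_{(i)}$, $A^{(i)}$ are recovered as the first row of $\mathbbm{1}_m^T \otimes A_{(i)}$ restricted to columns $\{1,\ldots,i-1\}$ and the first column of $\mathbbm{1}_m \otimes A^{(i)}$ restricted to rows $\{i+m,\ldots,n+m-1\}$, respectively. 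This makes the direction immediate.

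For the ``if'' direction, suppose $A = C[I_A]$ and $B = C[I_B]$ with $|I_A|=n$, $|I_B|=m$. The cardinality identity $|I_A|+|I_B| = n+m = (n+m-1)+1$, combined with the three-block diagonal tiling of \eref{eq1}, should pin down $|I_A \cap I_B|=1$, $I_A \cup I_B = [n+m-1]$, and the contiguity $I_B = \{p, p+1, \ldots, p+m-1\}$ with $\{p\} = I_A \cap I_B$. Letting $i$ be the rank of $p$ inside $I_A$, the blocks $A_{11}$, $A_{21}$, $A_{22}$, $B$ of the candidate $A \square_i B$ all agree with the corresponding blocks of $C_{n+m-1}$ directly from the subposet hypothesis.

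The step I expect to be the main obstacle is verifying the two replicated off-diagonal blocks $\mathbbm{1}_m^T \otimes A_{(i)}$ and $\mathbbm{1}_m \otimes A^{(i)}$ in \eref{eq1}. That is, I must show every row of $C[I_B \mid \{1,\ldots,p-1\}]$ coincides with $A_{(i)}$ and every column of $C[\{p+m,\ldots,n+m-1\} \mid I_B]$ coincides with $A^{(i)}$. Transitivity of $C_{n+m-1}$ alone yields only one direction of the comparison $c_{k,j}$ versus $c_{p,j}$ for $k \in I_B$ and $j < p$ in $I_A$, so the argument must additionally exploit the role of $p$ as the \emph{unique} attachment index between the disjoint parts $I_A \setminus\{p\}$ and $I_B \setminus\{p\}$, ruling out by contradiction any configuration in $C_{n+m-1}$ that produces the forbidden submatrix pattern $(\begin{smallmatrix} 1 & 1 \\ 0 & 1 \end{smallmatrix})$ relative to the chosen block layout. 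The column-side Kronecker block then follows by the symmetric (dual) argument, closing the equivalence.
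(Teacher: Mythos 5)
Your ``only if'' direction is correct and is essentially the paper's argument made precise: you exhibit $B$ as the central diagonal block $C[I_B]$ with $I_B=\{i,\dots,i+m-1\}$ and recover $A$ as the principal submatrix $C[I_A]$ with $I_A=\{1,\dots,i\}\cup\{i+m,\dots,n+m-1\}$, which is a cleaner version of the paper's remark that $A$ is obtained ``without the label $i$'' from the deleted row and column of the insertion point.

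The genuine gap is in the ``if'' direction, and the step you yourself flag as the main obstacle is not merely hard --- it fails. Knowing only that $A=C[I_A]$ and $B=C[I_B]$ for some index sets does not force the off-diagonal blocks of $C$ to have the replicated forms ${\mathbbm{1}}_m^T\otimes A_{(i)}$ and ${\mathbbm{1}}_m\otimes A^{(i)}$, and no amount of transitivity or forbidden-pattern analysis can supply this, because the implication is false. Take
\begin{equation*}
C=\begin{pmatrix}1&0&0\\ 1&1&0\\ 1&0&1\end{pmatrix},\qquad
A=C[\{2,3\}]=\begin{pmatrix}1&0\\ 0&1\end{pmatrix},\qquad
B=C[\{1,2\}]=\begin{pmatrix}1&0\\ 1&1\end{pmatrix}.
\end{equation*}
Here $A$ and $B$ are subposet matrices of $C$ of sizes $2$ and $2$, yet $A\square_1 B$ and $A\square_2 B$ are both disconnected while $C$ is connected, so $C\neq A\square_i B$ for any $i$. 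Your preliminary reductions are also circular: the claim that the cardinalities ``combined with the three-block diagonal tiling of \eref{eq1}'' pin down $|I_A\cap I_B|=1$ and the contiguity of $I_B$ presupposes that $C$ already has the block form \eref{eq1}, which is the conclusion; from the hypotheses alone one only gets $|I_A\cap I_B|\ge 1$. For comparison, the paper's entire treatment of this direction is the sentence that it ``is obvious from the definition of $\square_i$,'' so your attempt is no weaker than the published proof --- rather, by trying to carry it out honestly you have exposed that the converse needs a substantially stronger hypothesis (for instance, that $C$ admits a block partition already of the shape \eref{eq1} relative to the pair $(I_A,I_B)$) before it can hold.
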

\begin{proof}

Suppose that $C_{n+m-1}=A\;\scalebox{0.7}{$\square$}_i\;B,$ then $B$ will always be a subposet matrix of
$C_{n+m-1}$ by the construction of poset matrices using $\scalebox{0.7}{$\square$}_i.$ Since the entries of $A_{(i)}$ and $A^{(i)}$ in (\ref{eq1}) are determined by the deleted row and column entries at position $i$ of the poset matrix $A$ for the operation $\scalebox{0.7}{$\square$}_i$, it will therefore form a subposet matrix without the label $i.$ Therefore, the relationship between $A$ and $C_{n+m-1}$ is established using their unlabeled poset matrices.

 The reverse direction of the proof is obvious from the definition of $\scalebox{0.7}{$\square$}_i$. Also, the sizes of the matrices $A$ and $B$ must be greater or equal to $2$ since if it is $1$ then it is the identity matrix and the problem becomes trivial.
\end{proof}

\begin{example} {\rm Let $A\;\scalebox{0.8}{$\square$}_2 \;B=C_{n+m-1}$ be represented as
$$\begin{blockarray}{cccc}
1 & 2&3\\
\begin{block}{(ccc)c}
 1 & 0&0  &1\\
 0 & 1&0 &2\\
 1 & 1&1 &3 \\
\end{block}
\end{blockarray}\scalebox{0.7}{$\square$}_2\;\;
\begin{blockarray}{ccc}
4 & 5 \\
\begin{block}{(cc)c}
 1 & 0&4 \\
 1 & 1&5\\
\end{block}
\end{blockarray}=
\begin{blockarray}{ccccc}
1&4 & 5 &3\\
\begin{block}{(cccc)c}
 1 & 0 & 0&0 &1\\
 0 & 1 & 0&0 &4\\
 0 & 1 & 1&0 &5\\
 1 & 1 & 1&1 &3\\
\end{block}
\end{blockarray}$$
In the above example, we can deduce that $A$ is isomorphic to the subposet matrix $C_4[1,4,3]$ of $C_4.$}
\end{example}

 It is shown \cite{riordanposets} that every $n\times n$ binary Pascal matrix $P_n=[b_{ij}]$ where $b_{ij}={i\choose j}$ (mod $2$) is the poset matrix. The corresponding poset is called the Pascal poset ${\cal P}_n$. If $n=2^k$ then the Pascal poset ${\cal P}_n$ is isomorphic to the {\it $k$-dimensional Boolean lattice} ${\mathbb B}_{k}$ consisting of all subsets of a $k$-element set ordered by inclusion. As noted in the paper \cite{Csaba}, every finite poset is isomorphic to a subposet of a sufficiently large Boolean lattice. It turns out that every finite poset matrix is a submatrix of the infinite binary Pascal matrix.

\begin{corollary} Every $n\times n$ binary Pascal matrix $P_n$ can be represented by $P_n=P_2\;\scalebox{0.7}{$\square$}_2\;P_n(1|1)$, where $P_n(1|1)$ is the $(n-1)\times (n-1)$ matrix obtained from $P_n$ by deleting first row and first column.
\end{corollary}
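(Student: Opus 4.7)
The plan is to verify $P_n = P_2 \square_2 P_n(1|1)$ directly by comparing block decompositions. The key observation is that $P_n$ admits a natural $1 + (n-1)$ block decomposition forced by elementary binomial identities, and this decomposition is exactly the shape produced by the partial composition $\square_2$ on $P_2$ at position $i=2$.

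First, I would record the first row and first column of the binary Pascal matrix $P_n = [b_{ij}]$ where $b_{ij} = \binom{i-1}{j-1} \pmod{2}$. The identity $\binom{0}{k} = 0$ for every $k \geq 1$ forces the first row of $P_n$ to equal $(1, 0, \ldots, 0)$, while $\binom{k}{0} = 1$ for every $k \geq 0$ forces the first column to equal $(1, 1, \ldots, 1)^T$. By the definition of $P_n(1|1)$, the trailing $(n-1) \times (n-1)$ block of $P_n$ is precisely $P_n(1|1)$, so
\[ P_n = \begin{pmatrix} 1 & \mathbb{O}_{1,n-1} \\ \mathbbm{1}_{n-1}^T & P_n(1|1) \end{pmatrix}. \]

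Next, I would compute the right-hand side via \eqref{eq1}. With $A = P_2 = \begin{pmatrix} 1 & 0 \\ 1 & 1 \end{pmatrix}$, $B = P_n(1|1)$, $i = 2$, and $m = n - 1$, the relevant submatrices of $A$ are $A_{11} = [1]$ and $A_{(2)} = [1]$, while $A_{22}$, $A^{(2)}$, and $A_{21}$ are all vacuous. Formula \eqref{eq1} therefore collapses to
\[ P_2 \square_2 P_n(1|1) = \begin{pmatrix} 1 & \mathbb{O}_{1,n-1} \\ \mathbbm{1}_{n-1}^T \otimes [1] & P_n(1|1) \end{pmatrix} = \begin{pmatrix} 1 & \mathbb{O}_{1,n-1} \\ \mathbbm{1}_{n-1}^T & P_n(1|1) \end{pmatrix}, \]
which coincides with the block decomposition of $P_n$ recorded above.

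The only step requiring real care is matching the indexing convention between the binomial formula for $P_n$ and the submatrix notation introduced in \eqref{e:notation}; once the first row and first column of $P_n$ are identified, no further calculation is required, because the Pascal structure of the trailing block is built into the definition of $P_n(1|1)$. I do not anticipate any genuine obstacle, and Theorem~\ref{thm1} ensures that the object produced is automatically a valid poset matrix, so no consistency check on transitivity is needed.
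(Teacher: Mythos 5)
Your verification is correct: the paper states this corollary without any proof, and your direct block comparison — identifying the first row $(1,0,\ldots,0)$ and first column $\mathbbm{1}_{n-1}^T$ of $P_n$ from $\binom{0}{k}=0$ and $\binom{k}{0}=1$, then collapsing the definition of $\square_2$ for $A=P_2$, $i=2$ (where $A_{22}$, $A_{21}$, $A^{(2)}$ are all vacuous and $A_{(2)}=[1]$) — is exactly the computation the authors leave implicit. The only thing worth making explicit is that $P_n(1|1)=P_n[\{2,\ldots,n\}]$ is itself a poset matrix (being a principal submatrix of the poset matrix $P_n$), since that is required for $\square_2$ to be applicable and for Theorem~\ref{thm1} to guarantee the output is a poset matrix; this is immediate but is an input hypothesis, not a conclusion, of that theorem.
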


\subsection{Operad construction of $3\times 3$ and $4\times 4$  poset matrices}
 We begin by generating all the possible connected poset matrices in the set $\mathcal {PM}(3)$ using the partial composition operation $\scalebox{0.7}{$\square_i$}$ defined in (\ref{eq1})  as follows:

\begin{enumerate}
\item  $\begin{blockarray}{ccc}
1 & 2 \\
\begin{block}{(cc)c}
 1 & 0  &1\\
 1 & 1 &2 \\
\end{block}
\end{blockarray}\scalebox{0.7}{$\square$}_1\;\;\begin{blockarray}{ccc}
1 & 2 \\
\begin{block}{(cc)c}
 1 & 0&1 \\
 1 & 1&2\\
\end{block}
\end{blockarray}=
\begin{blockarray}{cccc}
1&2 & 3 \\
\begin{block}{(ccc)c}
 1 & 0 & 0&1 \\
 1 & 1 & 0&2 \\
 1 & 1 & 1&3 \\
\end{block}
\end{blockarray}\quad\Leftrightarrow\quad$
\begin{minipage}[c]{.20\textwidth}
\begin{tikzpicture}
  [scale=.7,auto=center,every node/.style={circle,fill=blue!20}]
\node(d1) at (9,1.7) {2};
\node(d6) at (9,3)  {3};
\node(d7) at (9,0.5) {1};
\draw (d6) -- (d1);
\draw (d1) -- (d7);

 \end{tikzpicture}

\end{minipage}

\item $\begin{blockarray}{ccc}
1 & 2 \\
\begin{block}{(cc)c}
 1 & 0  &1\\
 1 & 1 &2 \\
\end{block}
\end{blockarray}\scalebox{0.7}{$\square$}_1\;\;\begin{blockarray}{ccc}
1 & 2 \\
\begin{block}{(cc)c}
 1 & 0&1 \\
 0 & 1&2\\
\end{block}
\end{blockarray}=
\begin{blockarray}{cccc}
1&2& 3 \\
\begin{block}{(ccc)c}
 1 & 0 & 0&1 \\
 0 & 1 & 0&2 \\
 1 & 1 & 1&3 \\
\end{block}
\end{blockarray}\quad\Leftrightarrow\quad$
\begin{minipage}[c]{.20\textwidth}
\begin{tikzpicture}
  [scale=.7,auto=center,every node/.style={circle,fill=blue!20}]
\node(d1) at (10,1.7) {3};
\node(d6) at (11,0.5)  {2};
\node(d7) at (9,0.5) {1};
\draw (d6) -- (d1);
\draw (d1) -- (d7);

 \end{tikzpicture}

\end{minipage}

\item $\begin{blockarray}{ccc}
1 & 2 \\
\begin{block}{(cc)c}
 1 & 0  &1\\
 1 & 1 &2 \\
\end{block}
\end{blockarray}\scalebox{0.7}{$\square$}_2\;\;\begin{blockarray}{ccc}
1 & 2 \\
\begin{block}{(cc)c}
 1 & 0&1 \\
 0 & 1&2\\
\end{block}
\end{blockarray}=
\begin{blockarray}{cccc}
1&2 & 3 \\
\begin{block}{(ccc)c}
 1 & 0 & 0&1 \\
 1 & 1 & 0&2 \\
 1 & 0 & 1&3 \\
\end{block}
\end{blockarray}\quad\Leftrightarrow\quad$
\begin{minipage}[c]{.20\textwidth}
\begin{tikzpicture}
  [scale=.7,auto=center,every node/.style={circle,fill=blue!20}]
\node(d1) at (10,0.5) {1};
\node(d6) at (11,1.7)  {3};
\node(d7) at (9,1.7) {2};
\draw (d6) -- (d1);
\draw (d1) -- (d7);

 \end{tikzpicture}

\end{minipage}

\end{enumerate}

Similarly, the $2$ disconnected poset matrices of size $3\times 3$ in the set set $\mathcal {PM}(3)$  can be obtained as follows:

\begin{enumerate}
\item  $\begin{blockarray}{ccc}
1 & 2 \\
\begin{block}{(cc)c}
 1 & 0  &1\\
 0 & 1 &2 \\
\end{block}
\end{blockarray}\scalebox{0.7}{$\square$}_1\;\;\begin{blockarray}{ccc}
1 & 2 \\
\begin{block}{(cc)c}
 1 & 0&1 \\
 0 & 1&2\\
\end{block}
\end{blockarray}=
\begin{blockarray}{cccc}
1&2 & 3 \\
\begin{block}{(ccc)c}
 1 & 0 & 0&1 \\
 0 & 1 & 0&2 \\
 0 & 0 & 1&3 \\
\end{block}
\end{blockarray}\quad\Leftrightarrow\quad$
\begin{minipage}[c]{.20\textwidth}
\begin{tikzpicture}
  [scale=.7,auto=center,every node/.style={circle,fill=blue!20}]
\node(d1) at (10,1.7) {2};
\node(d6) at (11,1.7)  {3};
\node(d7) at (9,1.7) {1};

 \end{tikzpicture}

\end{minipage}

\item  $\begin{blockarray}{ccc}
1 & 2 \\
\begin{block}{(cc)c}
 1 & 0  &1\\
 0 & 1 &2 \\
\end{block}
\end{blockarray}\scalebox{0.7}{$\square$}_1\;\;\begin{blockarray}{ccc}
1 & 2 \\
\begin{block}{(cc)c}
 1 & 0&1 \\
 1 & 1&2\\
\end{block}
\end{blockarray}=
\begin{blockarray}{cccc}
1&2 & 3 \\
\begin{block}{(ccc)c}
 1 & 0 & 0&1 \\
 1 & 1 & 0&2 \\
 0 & 0 & 1&3 \\
\end{block}
\end{blockarray}\quad\Leftrightarrow\quad$
\begin{minipage}[c]{.20\textwidth}
\begin{tikzpicture}
  [scale=.7,auto=center,every node/.style={circle,fill=blue!20}]
\node(d1) at (9,1.3) {2};
\node(d6) at (9,2.5)  {1};
\node(d7) at (10.2,1.3) {3};
\draw (d6) -- (d1);

 \end{tikzpicture}

\end{minipage}

\end{enumerate}

The $10$ connected poset matrices of size $4\times 4$ in the set $\mathcal {PM}(4)$  can be obtained from the partial composition operations defined by (\ref{eq1}) and (\ref{eq3}) as follows:

\begin{enumerate}
\item $\begin{blockarray}{cccc}
1 & 2&3\\
\begin{block}{(ccc)c}
 1 & 0&0  &1\\
 1 & 1&0 &2\\
 1 & 1&1 &3 \\
\end{block}
\end{blockarray}\scalebox{0.7}{$\square$}_3\;\;
\begin{blockarray}{ccc}
1 & 2 \\
\begin{block}{(cc)c}
 1 & 0&1 \\
 0 & 1&2\\
\end{block}
\end{blockarray}=
\begin{blockarray}{ccccc}
1&2 & 3 &4\\
\begin{block}{(cccc)c}
 1 & 0 & 0&0 &1\\
 1 & 1 & 0&0 &2\\
 1 & 1 & 1&0 &3\\
 1 & 1 & 0&1 &4\\
\end{block}
\end{blockarray}\quad\Leftrightarrow\quad$
\begin{minipage}[c]{.20\textwidth}
\begin{tikzpicture}
  [scale=.7,auto=center,every node/.style={circle,fill=blue!20}]
\node(d1) at (10,4) {2};
\node(d6) at (11,5)  {4};
\node(d7) at (9,5) {3};
\node(d8) at (10,2.5) {1};
\draw (d6) -- (d1);
\draw (d1) -- (d7);
\draw (d1) -- (d8);

 \end{tikzpicture}

\end{minipage}

\item $\begin{blockarray}{cccc}
1 & 2&3\\
\begin{block}{(ccc)c}
 1 & 0&0  &1\\
 0 & 1&0 &2\\
 1 & 1&1 &3 \\
\end{block}
\end{blockarray}\scalebox{0.7}{$\square$}_3\;\;\begin{blockarray}{ccc}
1 &2 \\
\begin{block}{(cc)c}
 1 & 0&1 \\
 1 & 1&2\\
\end{block}
\end{blockarray}=
\begin{blockarray}{ccccc}
1&2 & 3 &4\\
\begin{block}{(cccc)c}
 1 & 0 & 0&0 &1\\
 0 & 1 & 0&0 &2\\
 1 & 1 & 1&0 &3\\
 1 & 1 & 1&1 &4\\
\end{block}
\end{blockarray}\quad\Leftrightarrow\quad$
\begin{minipage}[c]{.20\textwidth}
\begin{tikzpicture}
  [scale=.7,auto=center,every node/.style={circle,fill=blue!20}]

\node(d4) at (10,2) {4};
\node(d1) at (10,0.5) {3};
\node(d6) at (11,-0.3)  {2};
\node(d7) at (9,-0.3) {1};
\draw (d6) -- (d1);
\draw (d1) -- (d7);
\draw (d1) -- (d4);

 \end{tikzpicture}

\end{minipage}

\item $\begin{blockarray}{cccc}
1 & 2&3\\
\begin{block}{(ccc)c}
 1 & 0&0  &1\\
 1 & 1&0 &2\\
 1 & 1&1 &3 \\
\end{block}
\end{blockarray}\scalebox{0.7}{$\square$}_1\;\;
\begin{blockarray}{ccc}
1 & 2 \\
\begin{block}{(cc)c}
 1 & 0&1 \\
 1 & 1&2\\
\end{block}
\end{blockarray}=
\begin{blockarray}{ccccc}
1&2 & 3 &4\\
\begin{block}{(cccc)c}
 1 & 0 & 0&0 &1\\
 1 & 1 & 0&0 &2\\
 1 & 1 & 1&0 &3\\
 1 & 1 & 1&1 &4\\
\end{block}
\end{blockarray}\quad\Leftrightarrow\quad$
\begin{minipage}[c]{.20\textwidth}
\begin{tikzpicture}
  [scale=.7,auto=center,every node/.style={circle,fill=blue!20}]
\node(d1) at (10,4) {3};
\node(d6) at (10,3)  {2};
\node(d7) at (10,5) {4};
\node(d8) at (10,2) {1};
\draw (d6) -- (d1);
\draw (d1) -- (d7);
\draw (d6) -- (d8);

 \end{tikzpicture}

\end{minipage}

\item $
\begin{blockarray}{ccc}
1 & 2 \\
\begin{block}{(cc)c}
 1 & 0&1 \\
 1 & 1&2\\
\end{block}
\end{blockarray}
\scalebox{0.7}{$\square$}_1\;\;\begin{blockarray}{cccc}
1 & 2&3\\
\begin{block}{(ccc)c}
 1 & 0&0  &1\\
 1 & 1&0 &2\\
 1 & 0&1 &3 \\
\end{block}
\end{blockarray}
=\begin{blockarray}{ccccc}
1&2 & 3 &4\\
\begin{block}{(cccc)c}
 1 & 0 & 0&0 &1\\
 1 & 1 & 0&0 &2\\
 1 & 0 & 1&0 &3\\
 1 & 1 & 1&1 &4\\
\end{block}
\end{blockarray}\quad\Leftrightarrow\quad$
\begin{minipage}[c]{.20\textwidth}
\begin{tikzpicture}
  [scale=.7,auto=center,every node/.style={circle,fill=blue!20}]

\node[rectangle] (c1) at (6.2,3) {2};
  \node[rectangle] (c2) at (7,2) {1};
  \node[rectangle] (c4) at (7.8,3) {3};
  \node[rectangle] (c8) at (7,4)  {4};

      \draw (c1) -- (c2);
        \draw (c2) -- (c4);
    \draw (c8) -- (c4);
   \draw (c8) -- (c1);

 \end{tikzpicture}

\end{minipage}

\item $\begin{blockarray}{cccc}
1 & 2&3\\
\begin{block}{(ccc)c}
 1 & 0&0  &1\\
 1 & 1&0 &2\\
 1 & 0&1 &3 \\
\end{block}
\end{blockarray}\scalebox{0.7}{$\square$}_1\;\;
\begin{blockarray}{ccc}
1& 2 \\
\begin{block}{(cc)c}
 1 & 0&1 \\
 0 & 1&2\\
\end{block}
\end{blockarray}=
\begin{blockarray}{ccccc}
1&2 & 3 &4\\
\begin{block}{(cccc)c}
 1 & 0 & 0&0 &1\\
 0 & 1 & 0&0 &2\\
 1 & 1 & 1&0 &3\\
 1 & 1 & 0&1 &4\\
\end{block}
\end{blockarray}\quad\Leftrightarrow\quad$
\begin{minipage}[c]{.20\textwidth}
\begin{tikzpicture}
  [scale=.7,auto=center,every node/.style={circle,fill=blue!20}]
\node(d1) at (10,5) {3};
\node(d8) at (10,3) {1};
\node(d4) at (12,5) {4};
\node(d5) at (12,3) {2};
\draw (d4) -- (d5);
\draw (d1) -- (d8);
\draw (d1) -- (d5);
\draw (d4) -- (d8);

 \end{tikzpicture}

\end{minipage}

\item $\begin{blockarray}{cccc}
1 & 2&3\\
\begin{block}{(ccc)c}
 1 & 0&0  &1\\
 1 & 1&0 &2\\
 1 & 0&1 &3 \\
\end{block}
\end{blockarray}\scalebox{0.7}{$\square$}_2\;\;\begin{blockarray}{ccc}
1 &2 \\
\begin{block}{(cc)c}
 1 & 0&1 \\
 0 & 1&2\\
\end{block}
\end{blockarray}
=\begin{blockarray}{ccccc}
1&2 & 3 &4\\
\begin{block}{(cccc)c}
 1 & 0 & 0&0 &1\\
 1 & 1 & 0&0 &2\\
 1 & 0 & 1&0 &3\\
 1 & 0 & 0&1 &4\\
\end{block}
\end{blockarray}\quad\Leftrightarrow\quad$
\begin{minipage}[c]{.20\textwidth}
\begin{tikzpicture}
  [scale=.7,auto=center,every node/.style={circle,fill=blue!20}]

\node[rectangle] (c1) at (5.5,3) {2};
  \node[rectangle] (c2) at (7,1) {1};
  \node[rectangle] (c4) at (7,3) {3};
   \node[rectangle] (c5) at (8.5,3) {4};
      \draw (c5) -- (c2);
      \draw (c1) -- (c2);
        \draw (c2) -- (c4);

 \end{tikzpicture}

\end{minipage}

\item $\begin{blockarray}{cccc}
1 & 2&3\\
\begin{block}{(ccc)c}
 1 & 0&0  &1\\
 0 & 1&0 &2\\
 1 & 1&1 &3 \\
\end{block}
\end{blockarray}\scalebox{0.7}{$\square$}_1\;\;
\begin{blockarray}{ccc}
1 & 2 \\
\begin{block}{(cc)c}
 1 & 0&1 \\
 0 & 1&2\\
\end{block}
\end{blockarray}=
\begin{blockarray}{ccccc}
1&2 & 3 &4\\
\begin{block}{(cccc)c}
 1 & 0 & 0&0 &1\\
 0 & 1 & 0&0 &2\\
 0 & 0 & 1&0 &3\\
 1 & 1 & 1&1 &4\\
\end{block}
\end{blockarray}\quad\Leftrightarrow\quad$
\begin{minipage}[c]{.20\textwidth}
\begin{tikzpicture}
  [scale=.7,auto=center,every node/.style={circle,fill=blue!20}]

\node(d4) at (10,-0.3) {2};
\node(d1) at (10,1) {4};
\node(d6) at (11,-0.3)  {1};
\node(d7) at (9,-0.3) {3};
\draw (d6) -- (d1);
\draw (d1) -- (d7);
\draw (d1) -- (d4);

 \end{tikzpicture}

\end{minipage}

\item $\begin{blockarray}{cccc}
1 & 2&3\\
\begin{block}{(ccc)c}
 1 & 0&0  &1\\
 0 & 1&0 &2\\
 1 & 1&1 &3 \\
\end{block}
\end{blockarray}\scalebox{0.7}{$\square$}_1\;\;\begin{blockarray}{ccc}
1 &2 \\
\begin{block}{(cc)c}
 1 & 0&1 \\
 1 & 1&2\\
\end{block}
\end{blockarray}
=\begin{blockarray}{ccccc}
1&2 & 3 &4\\
\begin{block}{(cccc)c}
 1 & 0 & 0&0 &1\\
 1 & 1 & 0&0 &2\\
 0 & 0 & 1&0 &3\\
 1 & 1 & 1&1 &4\\
\end{block}
\end{blockarray}\quad\Leftrightarrow\quad$
\begin{minipage}[c]{.20\textwidth}
\begin{tikzpicture}
  [scale=.7,auto=center,every node/.style={circle,fill=blue!20}]

\node[rectangle] (c1) at (6.2,3) {3};
  \node[rectangle] (c2) at (7.8,1.7) {1};
  \node[rectangle] (c4) at (7.8,3) {2};
  \node[rectangle] (c8) at (7,4)  {4};

      \draw (c4) -- (c2);
    \draw (c8) -- (c4);
   \draw (c8) -- (c1);

 \end{tikzpicture}

\end{minipage}

\item $\begin{blockarray}{cccc}
1 & 2&3\\
\begin{block}{(ccc)c}
 1 & 0&0  &1\\
 1 & 1&0 &2\\
 1 & 0&1 &3 \\
\end{block}
\end{blockarray}\scalebox{0.7}{$\square$}_2\;\;\begin{blockarray}{ccc}
1 & 2 \\
\begin{block}{(cc)c}
 1 & 0&1 \\
 1 & 1&2\\
\end{block}
\end{blockarray}
=\begin{blockarray}{ccccc}
1&2 & 3 &4\\
\begin{block}{(cccc)c}
 1 & 0 & 0&0 &1\\
 1 & 1 & 0&0 &2\\
 1 & 1 & 1&0 &3\\
 1 & 0 & 0&1 &4\\
\end{block}
\end{blockarray}\quad\Leftrightarrow\quad$
\begin{minipage}[c]{.20\textwidth}
\begin{tikzpicture}
  [scale=.7,auto=center,every node/.style={circle,fill=blue!20}]

\node[rectangle] (c1) at (6.2,2.5) {2};
  \node[rectangle] (c2) at (7,1.5) {1};
  \node[rectangle] (c4) at (7.8,2.5) {4};
  \node[rectangle] (c8) at (6.2,3.7)  {3};

      \draw (c1) -- (c2);
       \draw (c4) -- (c2);
   \draw (c8) -- (c1);

 \end{tikzpicture}

\end{minipage}

\item $\begin{blockarray}{cccc}
1 & 2&3\\
\begin{block}{(ccc)c}
 1 & 0&0  &1\\
 1 & 1&0 &2\\
 1 & 0&1 &3 \\
\end{block}
\end{blockarray}\SG{\PosetComposition{1}{0}_2}\;\;
\begin{blockarray}{ccc}
1 & 2 \\
\begin{block}{(cc)c}
 1 & 0&1 \\
 1 & 1&2\\
\end{block}
\end{blockarray}=
\begin{blockarray}{ccccc}
1&2 & 3 &4\\
\begin{block}{(cccc)c}
 1 & 0 & 0&0 &1\\
 0 & 1 & 0&0 &2\\
 1 & 1 & 1&0 &3\\
 1 & 0 & 0&1 &4\\
\end{block}
\end{blockarray}\quad\Leftrightarrow\quad$
\begin{minipage}[c]{.20\textwidth}
\begin{tikzpicture}
  [scale=.7,auto=center,every node/.style={circle,fill=blue!20}]

\node(d1) at (9,1) {2};
\node(d6) at (9,3)  {3};
\node(d3) at (11,1)  {1};
\node(d5) at (11,3)  {4};
\draw (d6) -- (d1);
\draw (d3) -- (d5);
\draw (d1) -- (d5);

 \end{tikzpicture}

\end{minipage}

\end{enumerate}

Similarly, the $6$ disconnected poset matrices of size $4\times 4$ in the set $\mathcal {PM}(4)$   can be generated as follows.

\begin{enumerate}

\item $\begin{blockarray}{cccc}
1 & 2&3\\
\begin{block}{(ccc)c}
 1 & 0&0& 1\\
 1 & 1&0 &2\\
  0 & 0&1&3\\
\end{block}
\end{blockarray}\scalebox{0.7}{$\square$}_3\;\;
\begin{blockarray}{ccc}
1 & 2 \\
\begin{block}{(cc)c}
1 & 0&1\\
1 & 1&2\\
\end{block}
\end{blockarray}=
\begin{blockarray}{ccccc}
1&2 & 3 &4\\
\begin{block}{(cccc)c}
 1 & 0 & 0&0&1 \\
 1 & 1 & 0&0&2 \\
 0 & 0 & 1&0&3 \\
 0 & 0 & 1&1&4 \\
\end{block}
\end{blockarray}\quad\Leftrightarrow\quad$
\begin{minipage}[c]{.20\textwidth}
 \begin{tikzpicture}
\node [rectangle] (bb1) at (3,5) {2};
\node [rectangle] (bb2) at (3,3)  {1};
\node [rectangle]  (bb3) at (4,5) {4};
\node [rectangle] (bb4) at (4,3)  {3};
\node [use](aa17) at (5,4) {$.$};
\draw (bb2) -- (bb1);
\draw (bb3) -- (bb4);
\end{tikzpicture}

\end{minipage}

\item $\begin{blockarray}{cccc}
1 & 2&3\\
\begin{block}{(ccc)c}
1 & 0&0&1  \\
 0 & 1&0&2 \\
 0 & 0&1&3\\
\end{block}
\end{blockarray}\scalebox{0.7}{$\square$}_1\;\;
\begin{blockarray}{ccc}
1 & 2 \\
\begin{block}{(cc)c}
1 & 0&1\\
0 & 1&2\\
\end{block}
\end{blockarray}=
\begin{blockarray}{ccccc}
1&2 & 3 &4\\
\begin{block}{(cccc)c}
  1 & 0 & 0&0&1 \\
 0 & 1 & 0&0&2 \\
 0 & 0 & 1&0&3 \\
 0 & 0 & 0&1&4 \\
\end{block}
\end{blockarray}\quad\Leftrightarrow\quad$
\begin{minipage}[c]{.20\textwidth}
\begin{tikzpicture}
  [scale=.7,auto=center,every node/.style={circle,fill=blue!20}]
\node(d1) at (9,1) {1};
\node(d6) at (10,1)  {2};
\node(d7) at (11,1) {3};
\node(d8) at (12,1) {4};
\end{tikzpicture}
 \end{minipage}

\item $\begin{blockarray}{cccc}
1 & 2&3\\
\begin{block}{(ccc)c}
 1 & 0&0&1  \\
 0 & 1&0&2\\
   0 & 1&1&3\\
\end{block}
\end{blockarray}\scalebox{0.7}{$\square$}_1\;\;
\begin{blockarray}{ccc}
1 & 2 \\
\begin{block}{(cc)c}
1 & 0& 1\\
0 & 1& 2\\
\end{block}
\end{blockarray}=
\begin{blockarray}{ccccc}
1&2 & 3 &4\\
\begin{block}{(cccc)c}
 1 & 0 & 0&0&1 \\
 0 & 1 & 0&0&2 \\
 0 & 0 & 1&0&3 \\
 0 & 0 & 1&1&4 \\
\end{block}
\end{blockarray}\quad\Leftrightarrow\quad$
\begin{minipage}[c]{.20\textwidth}
\begin{tikzpicture}
  [scale=.7,auto=center,every node/.style={circle,fill=blue!20}]
\node(d1) at (9,1) {3};
\node(d6) at (9,2.5)  {4};
\node(d7) at (10,1) {1};
\node(d7) at (11,1) {2};
\draw (d6) -- (d1);
\end{tikzpicture}
 \end{minipage}

\item $\begin{blockarray}{cccc}
1 & 2&3\\
\begin{block}{(ccc)c}
  1 & 0&0&1 \\
 1 & 1&0&2 \\
  0 & 0&1&3\\
\end{block}
\end{blockarray}\scalebox{0.7}{$\square$}_1\;\;
\begin{blockarray}{ccc}
1 & 2 \\
\begin{block}{(cc)c}
1 & 0&1\\
1 & 1&2\\
\end{block}
\end{blockarray}=
\begin{blockarray}{ccccc}
1&2 & 3 &4\\
\begin{block}{(cccc)c}
  1 & 0 & 0&0 &1\\
 1 & 1 & 0&0&2 \\
 1 & 1 & 1&0&3 \\
 0 & 0 & 0&1&4 \\
\end{block}
\end{blockarray}\quad\Leftrightarrow\quad$
\begin{minipage}[c]{.20\textwidth}
\begin{tikzpicture}
  [scale=.7,auto=center,every node/.style={circle,fill=blue!20}]
\node(d1) at (9,1) {1};
\node(d6) at (9,2.5)  {2};
\node(d8) at (9,4)  {3};
\node(d7) at (10,1) {4};
\draw (d6) -- (d1);
\draw (d6) -- (d8);
\end{tikzpicture}
 \end{minipage}

\item $\begin{blockarray}{cccc}
1 & 2&3\\
\begin{block}{(ccc)c}
  1 & 0&0&1  \\
 0 & 1&0&2 \\
  0 & 1&1&3\\
\end{block}
\end{blockarray}\scalebox{0.7}{$\square$}_2\;\;
\begin{blockarray}{ccc}
1 & 2 \\
\begin{block}{(cc)c}
1 & 0&1\\
0 & 1&2\\
\end{block}
\end{blockarray}=
\begin{blockarray}{ccccc}
1&2 & 3 &4\\
\begin{block}{(cccc)c}
   1 & 0 & 0&0&1 \\
 0 & 1 & 0&0&2 \\
 0 & 0 & 1&0&3 \\
 0 & 1 & 1&1&4 \\
\end{block}
\end{blockarray}\quad\Leftrightarrow\quad$
\begin{minipage}[c]{.20\textwidth}
\begin{tikzpicture}
  [scale=.7,auto=center,every node/.style={circle,fill=blue!20}]
\node(d1) at (10,1.7) {4};
\node(d6) at (11,0.5)  {3};
\node(d7) at (9,0.5) {2};
\node(d88) at (12,0.5) {1};
\draw (d6) -- (d1);
\draw (d1) -- (d7);

 \end{tikzpicture}
  \end{minipage}

\item $\begin{blockarray}{cccc}
1 & 2&3\\
\begin{block}{(ccc)c}
  1 & 0&0&1  \\
 0 & 1&0&2 \\
   0 & 1&1&3\\
\end{block}
\end{blockarray}\scalebox{0.7}{$\square$}_3\;\;
\begin{blockarray}{ccc}
1 & 2 \\
\begin{block}{(cc)c}
1 & 0&1\\
 0 & 1&2\\
\end{block}
\end{blockarray}=
\begin{blockarray}{ccccc}
1&2 & 3 &4\\
\begin{block}{(cccc)c}
     1 & 0 & 0&0 &1\\
 0 & 1 & 0&0 &2\\
 0 & 1 & 1&0 &3\\
 0 & 1 & 0&1&4 \\
\end{block}
\end{blockarray}\quad\Leftrightarrow\quad$
\begin{minipage}[c]{.20\textwidth}
\begin{tikzpicture}
  [scale=.7,auto=center,every node/.style={circle,fill=blue!20}]
\node(d1) at (10,0.5) {2};
\node(d6) at (11,1.7)  {4};
\node(d7) at (9,1.7) {3};
\node(d88) at (12,0.5) {1};
\draw (d6) -- (d1);
\draw (d1) -- (d7);

 \end{tikzpicture}
   \end{minipage}

\end{enumerate}

\begin{definition}{\rm 
Let  $\text{A}=[a_{i,j}]$ be a poset matrix of size $n\times n$. If $a_{i,j}=1$ for all $i\in[n]$ whenever $i\leq j,$  then we call $\text{A}$ a totally connnected poset matrix.}
\end{definition}
\begin{definition}{\rm 
Let  $\text{A}=[a_{i,j}]$ be a poset matrix of size $n\times n.$ If $a_{i,j}=0$ for all $i\in[n]$ whenever $i\neq j,$ Then we call $\text{A}$ a totally disconnected poset matrix.}
\end{definition}


\begin{remark}{\rm 
For the purpose of clarity on the notations used in the theorems that follow, given a poset matrix $A$ of size $n\times n,$ we refer to $A_{(j)}[\{k,...,n\}\mid\{1,..,k-1\}]$ and $A^{(j)}[\{k,...,n\}\mid\{1,..,k-1\}]$ to denote the $j^{th}$ row of size $1\times(k-1)$ and the $j^{th}$ column of size $(n-k+1)\times 1$ respectively of the submatrix $A[\{k,...,n\}\mid\{1,..,k-1\}]$ derived from the matrix $A.$ If $k=2$ and $n=5$ then the set $\{k,...,n\}$ is equivalent to $\{2,3,4,5\}.$}
\end{remark}

\begin{theorem}\label{connectedsubposet} Let $A$ be a connected poset matrix of size $n\times n$ and let $\text{B}$ be a totally connected poset matrix of size $m\times m.$ Then the following holds.
\begin{enumerate}
\item  If there exist a totally connected subposet matrix $A[\alpha]$ defined on $\alpha=\{1,...,k\}$ with $1<k<n$ such that the submatrix $D:=A[\{k+1,...,n\}|\{1,...,k\}]$ satisfies the condition that $D^{(p)}=D^{(p+1)}$ for each $p\in\{1,...,k-1\}.$ Then $\text{A}\square_i \text{B}=\text{A}\square_r \text{B}$ whenever $i,r\in\alpha.$
\item If there exist a totally connected subposet matrix $A[\alpha]$ defined on $\alpha=\{k,...,n\}$ with $k\geq 2$ such that the submatrix $D:=A[\{k,...,n\}|\{1,...,k-1\}]$ satisfies the condition that $D_{(p)}=D_{(p+1)}$ for each $p\in\{1,...,n-k\}.$ Then $\text{A}\square_i \text{B}=\text{A}\square_r \text{B}$ whenever $i,r\in\alpha.$
\item   If there exist a totally connected subposet matrix $A[\alpha]$ defined on $\alpha=\{d,...,k\}$ with  $1<d<k<n$ such that the submatrix $D:=A[\{k,...,n\}|\{1,...,k-1\}]$ satisfies the condition $D_{(p)}=D_{(p+1)}$ for each $p\in\{1,...,n-k\}$ where $D_{(p)}$ is a   \textbf{1}-vector or a \textbf{0}-vector of size $1 \times(k-1).$ Then $\text{A}\square_i \text{B}=\text{A}\square_r \text{B}$ whenever $i,r\in\alpha.$
\end{enumerate}
\end{theorem}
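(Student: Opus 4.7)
I would prove the equality $A\square_i B = A\square_r B$ by entry-by-entry comparison of the two $(n+m-1)\times(n+m-1)$ matrices. Fix $i,r\in\alpha$ with $i<r$, and set $M_s:=A\square_s B$. The block formula~(\ref{eq1}) yields, relative to any $s\in[n]$, a closed form for the $(p,q)$-entry of $M_s$ in terms of the regions $L_s=\{1,\ldots,s-1\}$, $R_s=\{s,\ldots,s+m-1\}$, $U_s=\{s+m,\ldots,n+m-1\}$: for $p\geq q$ one has $A[p,q]$ on $L_s\times L_s$, $B[p-s+1,q-s+1]$ on $R_s\times R_s$, $A[p-m+1,q-m+1]$ on $U_s\times U_s$, and on the three mixed lower-triangular blocks the expressions $a_{s,q}$ (for $p\in R_s$, $q\in L_s$), $a_{p-m+1,q}$ (for $p\in U_s$, $q\in L_s$), and $a_{p-m+1,s}$ (for $p\in U_s$, $q\in R_s$). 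This catalog reduces the theorem to a finite table of region pairs.

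Next I would form the common refinement of the two decompositions, producing the five regions $L=L_i\cap L_r$, $W_\ell=R_i\cap L_r$, $W_B=R_i\cap R_r$ (the $B$-overlap, nonempty iff $r\leq i+m-1$), $W_u=U_i\cap R_r$, and $U=U_i\cap U_r$. For every pair $(p,q)$ in the lower triangle I would read $M_i[p,q]$ and $M_r[p,q]$ off the catalog and check equality using three uniformity facts: (i)~since $A[\alpha]$ is totally connected, $a_{s,t}=1$ for $s,t\in\alpha$ with $s\geq t$; (ii)~since $B$ is totally connected, $B[u,v]=1$ for $u\geq v$; and (iii)~the hypothesis $D^{(p)}=D^{(p+1)}$ of Part~(1) gives $a_{j,s}=a_{j,t}$ for $s,t\in\alpha$ and every $j>k$. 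For example, when $p\in W_u$ and $q\in L$ one reads $M_i[p,q]=a_{p-m+1,q}$ and $M_r[p,q]=a_{r,q}$; the inclusion $W_u\subseteq R_r$ forces $p-m+1\leq r\leq k$, so $p-m+1\in\alpha$ and both entries equal $1$ by (i). The role of (iii) appears, for instance, when $p\in U$ and $q\in W_\ell$: there $M_i[p,q]=a_{p-m+1,i}$ and $M_r[p,q]=a_{p-m+1,q}$ with $p-m+1>k$, and (iii) identifies the two values.

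Parts (2) and (3) follow the same template with the chain repositioned in the labeling. In Part~(2) the chain sits at the top, so there is no ``above the chain'' region and the hypothesis $D_{(p)}=D_{(p+1)}$ plays the symmetric role of~(iii) for cases involving $L$; alternatively one may derive Part~(2) from Part~(1) by the dualization Theorem~\ref{dualthm}. In Part~(3) the chain is internal, so both directions of uniformity must enter the case analysis; the strengthened requirement that each $D_{(p)}$ is a $0$- or $1$-vector, together with the chain monotonicity of $a_{i,q}$ along $i\in\alpha$ coming from the transitivity $q\pq i\pq i'$ for $i<i'$ in $\alpha$, supplies the needed uniformity in the residual cases. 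The main obstacle throughout is purely organizational: the dozen or so region pairs each admit a one-line verification, and I would lay them out as a compact case table indexed by the regions of $p$ and $q$, each cell naming the uniformity fact responsible for the match.
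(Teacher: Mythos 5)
Your proposal is correct and follows essentially the same route as the paper's proof: both decompose $A\square_i B$ and $A\square_r B$ into blocks determined by the insertion point and match them using the all-ones uniformity of the totally connected pieces together with the equal-columns (resp.\ equal-rows) hypothesis on $D$, the paper merely organizing this as a comparison of adjacent insertion points $j$ and $j+1$ with three coarse blocks $E1,E2,E3$ playing the role of your refined regions. Two minor bookkeeping points to fix: when $r>i+m$ the common refinement also contains the nonempty region $U_i\cap L_r$, which your five-region list omits (its entries match by the same uniformity fact (i)), and in your $U\times W_\ell$ case one can have $p-m+1\le k$, in which case the equality follows from (i) rather than (iii).
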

\begin{proof}
Let $j,j+1\in\alpha.$ In (1), it suffices to show that the output poset matrix $A\square_{j} B=A\square_{j +1}B$ for $j\in\{1,...,k\}.$   Consider that the output poset matrix structure of $A\square_{j}B=E$ can be partitioned into three submatrices $E1, E2$ and $E3$ such that:
\begin{itemize}
\item $E1=E[\{1,...,k+m-1\}]$  of size $(k+m-1)\times (k+m-1).$
\item $E2=E[\{k+m,...,n+m-1\}\mid\{1,...,k+m-1\}]$ of size $(n-k)\times (k+m-1).$
\item $E3=E[\{k+m,...,n+m-1\}]$ of size $(n-k)\times(n-k).$
\end{itemize}
$E1$ can be subpartitioned such that $E[\{j,...,j+m\}]=B$, $E[\{j,...,j+m\}\mid\{1,...,j-1\} ]$, $E[\{j+m+1,....,k+m-1\}\mid\{1,...,k+m-1\}]$ and $E[\{1,...,j-1\}]$  have entries of all $1$'s that lie on and below the main diagonal of $E$ since these entries are derived from $A[\{1,...,k\}]$ which is a totally connected subposet matrix.

\noindent $E2$ can be subpartitioned such that $E[\{k+m,...,n+m-1\}\mid\{1,...,j-1\}]=A[\{k+1,...,n+m-1\}\mid\{1,...,j-1\}], E[\{k+m,...,n+m-1\}\mid\{j,...,j+m\}]={\mathbbm{1}}_{m}\otimes A^{(j)}[\{k+1,..n\}\mid\{1,..,k\}]$, $ E[\{k+m,...,n+m-1\}\mid\{j+m+1,...,k\}]=A[\{k+1,...,n+m-1\}\mid\{j+m+1,...,k]$ and $E3=A[\{k+1,...,n+m-1\}].$

In the adjacent insertion point $j+1$, similarly consider that the output poset matrix structure of $A\square_{j+1}B=F$ can be partitioned into three submatrices $F1, F2$ and $F3$ such that:
\begin{itemize}
\item $F1=F[\{1,...,k+m-1\}]$  of size $(k+m-1)\times (k+m-1).$
\item $F2=F[\{k+m,...,n+m-1\}\mid\{1,...,k+m-1\}]$ of size $(n-k)\times (k+m-1).$
\item $F3=F[\{k+m,...,n+m-1\}]$ of size $(n-k)\times(n-k).$
\end{itemize}
$F1$ can be subpartitioned such that $F[\{j+1,...,j+1+m\}]=B$, $F[\{j+1,...,j+1+m\}\mid\{1,...,j\} ]$, $F[\{j+m+2,....,k+m-1\}\mid\{1,...,k+m-1\}]$ and $F[\{1,...,j\}]$  have entries of all $1$'s that lie on and below the main diagonal of $F$ since these entries are derived from $A[\{1,...,k\}]$ which is a totally connected subposet matrix.

\noindent $F2$ can be subpartitioned such that $F[\{k+m,...,n+m-1\}\mid\{1,...,j\}]=A[\{k+1,...,n+m-1\}\mid\{1,...,j\}], F[\{k+m,...,n+m-1\}\mid\{j+1,...,j+1+m\}]={\mathbbm{1}}_{m}\otimes A^{(j+1)}[\{k+1,..n\}\mid\{1,..,k\}]$, $ F[\{k+m,...,n+m-1\}\mid\{j+m+2,...,k\}]=A[\{k+1,...,n+m-1\}\mid\{j+m+2,...,k]$ and $F3=A[\{k+1,...,n+m-1\}].$

From the matrix structure of $E$ and $F$ presented above, we get the following.

\begin{itemize}
\item $E1=F1$ since all the subpartitions of $E1$ and $F1$ are equal.
\item $E2=F2$ since as the subpartitions can be equal whenever ${\mathbbm{1}}_{m}\otimes A^{(j)}[\{k+1,..n\}\mid\{1,..,k\}]={\mathbbm{1}}_{m}\otimes A^{(j+1)}[\{k+1,..n\}\mid\{1,..,k\}].$ This condition holds when each of the columns in the submatrix $A[\{k+1,..n\}\mid\{1,..,k\}]$ have their correponding entries equal.
\item $E3=F3.$
\end{itemize}
 Thus,  $A\square_{j} B=A\square_{j +1}B$ for $j\in\{1,...,k\}.$

Let $j,j+1\in\alpha.$ In (2) it suffices to show that the output poset matrix $A\square_{j} B=A\square_{j +1}B$ for $j\in\{k,...,n\}.$  Consider that the output poset matrix structure of $A\square_{j}B=G$ can be partitioned into three submatrices $G1, G2$ and $G3$ such that:

\begin{itemize}
\item $G1=G[\{1,...,k-1\}]$ of size $(k-1)\times(k-1).$
\item  $G2=G[\{k,...,n+m-1\}\mid\{1,...,k-1\}]$ of size $(n+m-k)\times(k-1).$
\item $G3=G[\{k,...,n+m-1\}]$ of size $(n+m-k)\times(n+m-k)$.
\end{itemize}
We note that $G1=A[\{1,...,k-1\}].$ $G2$ can be subpartitioned such that  $G[\{j,...,j+m\}\mid\{1,...,k-1\}]={\mathbbm{1}}_m^T\otimes A_{(j)}[\{k,...,n\}\mid\{1,...,k-1\}]$ and $G[\{j+m+1,...,n+m-1\}\mid\{1,...,k-1\}]=A[\{j+1,...,n\}\mid\{1,...,k-1].$ $G3$ can be subpartitioned such that $G[\{j,...j+m\}]=B,$ $G[\{k,..,j-1\}]$, and $G[\{j+m+1,...,n+m-1\}]$ have entries of all $1$'s that lie on and below the main diagonal of $G$ since these entries are derived from $A[\{k,...,n\}]$ which is a totally connected subposet matrix.

In the adjacent insertion point $j+1$, similarly consider that the output poset matrix structure of $A\square_{j+1}B=H$ can be partitioned into three submatrices $H1, H2$ and $H3$ such that:
\begin{itemize}
\item $H1=H[\{1,...,k-1\}]$ of size $(k-1)\times(k-1).$
\item  $H2=H[\{k,...,n+m-1\}\mid\{1,...,k-1\}]$ of size $(n+m-k)\times(k-1).$
\item $H3=H[\{k,...,n+m-1\}]$ of size $(n+m-k)\times(n+m-k)$.
\end{itemize}
Based on the matrix structures for $G$ and $H$ we obtain the following.
\begin{itemize}
\item $H1=A[\{1,...,k-1\}]=G1.$
\item By replacing $j:=j+1$ in $G2$ and noting that ${\mathbbm{1}}_m^T\otimes A_{(j)}[\{k,...,n\}\mid\{1,...,k-1\}]={\mathbbm{1}}_m^T\otimes A_{(j+1)}[\{k,...,n\}\mid\{1,...,k-1\}]$ is satisfied  when each of the rows in the submatrix $A[\{k,..,n\}\mid\{1,..,k-1\}]$ have their correponding entries equal. Therefore whenever this condition holds $G2=H2.$
\item $G3=H3$ since as replacing $j$ in $E3$ with $j+1,$ preserves the same matrix structure in $H3.$
\end{itemize}
 Thus,  $A\square_{j} B=A\square_{j +1}B$ for each $j\in\{k,...,n\}.$

\noindent The proof of (3) follows from the arguments in (1) and (2).

\end{proof}

\begin{example} {\rm Consider the poset matrices}
\end{example}
 $$\text{A}=\begin{blockarray}{ccccc}
1 & 2&3&4\\
\begin{block}{(cccc)c}
 1 & 0&0  &0&1\\
 1 & 1&0 &0&2\\
 1 & 1&1 &0&3 \\
 1&1&0&1&4\\
\end{block}
\end{blockarray}\qquad\text{B}=\begin{blockarray}{ccc}
1 & 2 \\
\begin{block}{(cc)c}
 1 & 0&1 \\
 1 & 1&2\\
\end{block}
\end{blockarray}\qquad\text{C}=\begin{blockarray}{cccccc}
1&2 & 3 &4&5\\
\begin{block}{(ccccc)c}
 1 & 0 & 0&0&0 &1\\
 1 & 1 & 0&0& 0&2\\
 1 & 1 & 1&0 &0&3\\
 1&1&1&1&0&4\\
 1 & 1 & 1&0&1 &5\\
\end{block}
\end{blockarray}$$

It can be verified that $\text{A}\square_1 \text{B}=\text{A}\square_2\text{B}=\text{C}.$ On the other hand, $\text{A}\square_1 \text{B}\neq\text{A}\square_3 \text{B}\neq\text{A}\square_4 \text{B}$
\begin{remark}{\rm
The subposet matrix $A[\alpha]$ with  $\alpha=\{1,2\}$ forms a totally connected subposet matrix of $A.$ The submatrix  $D=A[{3,4}|{1,2}]$ has $2$ equal columns. By Theorem \ref{connectedsubposet} the output poset matrices from insertion at the labels $\{1,2\}$ are identical. On the other hand, consider the totally connected subposet matrix  $A[\alpha]$ with  $\alpha=\{1,2,3\}.$ In this case its associated submatrix $D=A[\{4\}|\{1,2,3\}]$ do not have all equal columns since the entry of the third column of $D$ is different from the first and second column. By Theorem \ref{connectedsubposet} its output poset matrix from the square partial composition operation at insertion point $3$.would be different from the output poset matrices at insertion points $1$ and $2$ of the input poset matrix $A.$}

\end{remark}

\begin{example}{\rm Consider the poset matrices}
\end{example}
 $$\text{A}=\begin{blockarray}{ccccc}
1 & 2&3&4\\
\begin{block}{(cccc)c}
 1 & 0&0  &0&1\\
 1 & 1&0 &0&2\\
 0 & 0&1 &0&3 \\
 1&1&1&1&4\\
\end{block}
\end{blockarray}\qquad\text{B}=\begin{blockarray}{ccc}
1 & 2 \\
\begin{block}{(cc)c}
 1 & 0&1 \\
 1 & 1&2\\
\end{block}
\end{blockarray}\qquad\text{C}=\begin{blockarray}{cccccc}
1&2 & 3 &4&5\\
\begin{block}{(ccccc)c}
 1 & 0 & 0&0&0 &1\\
 1 & 1 & 0&0& 0&2\\
 1 & 1 & 1&0 &0&3\\
 0&0&0&1&0&4\\
 1 & 1 & 1&1&1 &5\\
\end{block}
\end{blockarray}$$

It can be verified that $\text{A}\square_1 \text{B}=\text{A}\square_2\text{B}=\text{C}.$ On the other hand, $\text{A}\square_1 \text{B}\neq\text{A}\square_3 \text{B}\neq\text{A}\square_4 \text{B}$
\begin{remark}
{\rm The totally connected subposet matrix $A[\alpha]$ with  $\alpha=\{1,2\}$ forms a totally connected subposet matrix of $A.$ The submatrix  $D=A[{3,4}|{1,2}]$ has $2$ equal coluns. By Theorem \ref{connectedsubposet}(i) the outposet matrix from insertion at the labels $\{1,2\}$ are identical. On the other hand,  the totally connected subposet matrix $A[\alpha]$ with  $\alpha=\{3,4\}$ fdoes not result in identical output poset matrices at insertion points $3$ and $4.$  In this case, it can be observed that the associated submatrix $D=A[{3,4}|{1,2}]$ does not have equal rows as required by Theorem \ref{connectedsubposet}(ii).}

\end{remark}

\begin{example}{\rm Consider the poset matrices}
\end{example}
 $$\text{A}=\begin{blockarray}{ccccc}
1 & 2&3&4\\
\begin{block}{(cccc)c}
 1 & 0&0  &0&1\\
 1 & 1&0 &0&2\\
 1 & 0&1 &0&3 \\
 1&0&1&1&4\\
\end{block}
\end{blockarray}\qquad\text{B}=\begin{blockarray}{ccc}
1 & 2 \\
\begin{block}{(cc)c}
 1 & 0&1 \\
 1 & 1&2\\
\end{block}
\end{blockarray}\qquad\text{C}=\begin{blockarray}{cccccc}
1&2 & 3 &4&5\\
\begin{block}{(ccccc)c}
 1 & 0 & 0&0&0 &1\\
 1 & 1 & 0&0& 0&2\\
 1 & 0 & 1&0 &0&3\\
 1&0&1&1&0&4\\
 1 & 0 & 1&1&1 &5\\
\end{block}
\end{blockarray}$$

It can be verified that $\text{A}\square_3 \text{B}=\text{A}\square_4\text{B}=\text{C}.$ On the other hand, $\text{A}\square_3 \text{B}\neq\text{A}\square_1 \text{B}\neq\text{A}\square_2 \text{B}.$
\begin{remark}{\rm The totally connected subposet matrix $A[\alpha]$ with  $\alpha=\{3,4\}$ forms a totally connected subposet matrix of $A.$ The submatrix  $D=A[{3,4}|{1,2}]$ has $2$ equal rows. By Theorem \ref{connectedsubposet}(ii) the outposet matrix from insertion at the labels $\{3,4\}$ are identical. On the other hand,  the totally connected subposet matrix $A[\alpha]$ with  $\alpha=\{1,2\}$ does not result in identical output poset matrices at insertion points $1$ and $2.$  In this case, it can be observed that the associated submatrix $D=A[{3,4}|{1,2}]$ does not have equal columns as required by Theorem \ref{connectedsubposet}(i).
}

\end{remark}

\begin{theorem}\label{disconnectedsubposet} Let $A\in {\cal PM}(n)$ and let $\text{B}$ be a totally disconnected poset matrix of size $m\times m.$  Then the following holds.
\begin{enumerate}
\item  If there exist a totally disconnected subposet matrix $A[\alpha]$ defined on $\alpha=\{1,...,k\}$ with $1<k<n$ such that the submatrix $D:=A[\{k+1,...,n\}|\{1,...,k\}]$ satisfies the condition $D^{(p)}=D^{(p+1)}$ for each $p\in\{1,...,k-1\}.$ Then $\text{A}\square_i \text{B}=\text{A}\square_r \text{B}$ whenever $i,r\in\alpha.$
\item If there exist a totally disconnected subposet matrix $A[\alpha]$ defined on $\alpha=\{k,...,n\}$ with $k\geq 2$ such that the submatrix $D:=A[\{k,...,n\}|\{1,...,k-1\}]$ satisfies the condition $D_{(p)}=D_{(p+1)}$ for each $p\in\{1,...,n-k\}.$ Then $\text{A}\square_i \text{B}=\text{A}\square_r \text{B}$ whenever $i,r\in\alpha.$
\item   If there exist a totally disconnected subposet matrix $A[\alpha]$ defined on $\alpha=\{d,...,k\}$ with $1<d<k<n$ such that the submatrix $D:=A[\{k,...,n\}|\{1,...,k-1\}]$ satisfies the condition $D_{(p)}=D_{(p+1)}$ for each $p\in\{1,...,n-k\}$ where $D_{(p)}$ is a   \textbf{1}-vector or a \textbf{0}-vector of size $1 \times(k-1).$  Then $\text{A}\square_i \text{B}=\text{A}\square_r \text{B}$ whenever $i,r\in\alpha.$
\end{enumerate}
\end{theorem}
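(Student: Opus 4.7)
The plan is to mirror the proof of Theorem~\ref{connectedsubposet} almost verbatim, with the role of ``all $1$'s below the diagonal in $A[\alpha]$ and $B$'' now played by ``all $0$'s below the diagonal''. In each of the three cases, it suffices to verify $A\square_j B = A\square_{j+1} B$ for adjacent indices $j, j+1 \in \alpha$, since iterating this equality over the integer interval $\alpha$ yields the conclusion for arbitrary $i,r\in\alpha$.

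For case (1), I partition the output matrix $E := A\square_j B$ into the three blocks $E_1 := E[\{1,\ldots,k+m-1\}]$, $E_2 := E[\{k+m,\ldots,n+m-1\}\mid\{1,\ldots,k+m-1\}]$, $E_3 := E[\{k+m,\ldots,n+m-1\}]$, and similarly $F_1,F_2,F_3$ from $F := A\square_{j+1} B$. Inside $E_1$, the inserted copy of $B$ sits at positions $\{j,\ldots,j+m-1\}$ and, being totally disconnected, contributes only diagonal ones; the surrounding below-diagonal entries come from $A[\alpha]$, which is also totally disconnected, hence they vanish. The same description applies to $F_1$, giving $E_1 = F_1$ (a totally disconnected block of size $k+m-1$). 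The bottom-right block $E_3 = A[\{k+1,\ldots,n\}] = F_3$ is insensitive to the insertion index. The only nontrivial equality is $E_2 = F_2$: the $B$-columns of $E_2$ are ${\mathbbm{1}}_m \otimes A^{(j)}[\{k+1,\ldots,n\}\mid\{1,\ldots,k\}]$, and of $F_2$ are ${\mathbbm{1}}_m \otimes A^{(j+1)}[\{k+1,\ldots,n\}\mid\{1,\ldots,k\}]$; these agree by the hypothesis $D^{(j)} = D^{(j+1)}$. The flanking $A[\{k+1,\ldots,n\}\mid\cdots]$ pieces coincide because deleting column $j$ instead of column $j+1$ is immaterial when the two columns are equal.

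Case (2) is the transposed analogue: the partition now isolates $A[\{1,\ldots,k-1\}]$ in the top-left, and the insertion of $B$ occurs in the large bottom-right block. The row hypothesis $D_{(p)} = D_{(p+1)}$ ensures ${\mathbbm{1}}_m^T \otimes A_{(j)} = {\mathbbm{1}}_m^T \otimes A_{(j+1)}$ at the relevant restriction, while the remaining blocks coincide by the same arguments as before. Case (3), where $\alpha = \{d,\ldots,k\}$ lies strictly interior, follows by combining (1) and (2): the extra assumption that each $D_{(p)}$ is uniformly a \textbf{0}-vector or \textbf{1}-vector ensures that both the above-$\alpha$ Kronecker block (${\mathbbm{1}}_m^T \otimes A_{(j)}$) and the below-$\alpha$ Kronecker block (${\mathbbm{1}}_m \otimes A^{(j)}$) are invariant under the shift $j \mapsto j+1$.

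The main obstacle is bookkeeping rather than conceptual: one must carefully verify that the row- and column-index ranges of the subblocks align correctly under $j \mapsto j+1$, particularly in case (3) where both the upper and lower shoulders of the inserted $B$ must be tracked. Once the partition is fixed, each required equality either exploits the totally disconnected nature of $A[\alpha]$ and $B$ to produce identity-like blocks, or uses the hypothesis on $D$ to match the Kronecker copies across the two insertion points.
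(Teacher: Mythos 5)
Your proposal is correct and takes essentially the same route as the paper: the paper's proof of this theorem is literally the single line ``Similar to Theorem~\ref{connectedsubposet}'', and your argument is precisely that analogy carried out, with the same three-block partition $E1,E2,E3$ and the hypothesis on $D$ matching the Kronecker shoulders across adjacent insertion points. If anything, you supply more detail than the paper does.
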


\begin{proof}
Similar to Theorem \ref{connectedsubposet}.
\end{proof}

\begin{example}
\end{example}

Consider the poset matrices $$\text{A}=\begin{blockarray}{ccccc}
1 & 2&3&4\\
\begin{block}{(cccc)c}
 1 & 0&0  &0&1\\
 1 & 1&0 &0&2\\
 1 & 1&1 &0&3 \\
 1&1&0&1&4\\
\end{block}
\end{blockarray}\qquad\text{B}=\begin{blockarray}{ccc}
1 & 2 \\
\begin{block}{(cc)c}
 1 & 0&1 \\
 0 & 1&2\\
\end{block}
\end{blockarray}\qquad\text{C}=\begin{blockarray}{cccccc}
1&2 & 3 &4&5\\
\begin{block}{(ccccc)c}
 1 & 0 & 0&0&0 &1\\
 1 & 1 & 0&0& 0&2\\
 1 & 1 & 1&0 &0&3\\
 1&1&0&1&0&4\\
 1 & 1 & 0&0&1 &5\\
\end{block}
\end{blockarray}$$

It can be verified that $\text{A}\square_3 \text{B}=\text{A}\square_4\text{B}=\text{C}.$ On the other hand, $\text{A}\square_1 \text{B}\neq\text{A}\square_2\text{B}\neq\text{A}\square_4 \text{B}.$

\begin{remark}
{\rm The subposet matrix $A[\alpha]$ with  $\alpha=\{3,4\}$ forms a totally disconnected subposet matrix of $A.$ The submatrix  $D=A[{3,4}|{1,2}]$ has $2$ equal rows. By Theorem \ref{disconnectedsubposet}(ii) the output poset matrix derived from the square PCO insertion at the labels $\{3,4\}$ are identical.}
\end{remark}

\begin{example}{\rm 
Consider the poset matrices}
\end{example}
 $$\text{A}=\begin{blockarray}{ccccc}
1 & 2&3&4\\
\begin{block}{(cccc)c}
 1 & 0&0  &0&1\\
 0 & 1&0 &0&2\\
 0 & 0&1 &0&3 \\
 1&1&1&1&4\\
\end{block}
\end{blockarray}\qquad\text{B}=\begin{blockarray}{ccc}
1 & 2 \\
\begin{block}{(cc)c}
 1 & 0&1 \\
 0 & 1&2\\
\end{block}
\end{blockarray}\qquad\text{C}=\begin{blockarray}{cccccc}
1&2 & 3 &4&5\\
\begin{block}{(ccccc)c}
 1 & 0 & 0&0&0 &1\\
 0 & 1 & 0&0& 0&2\\
 0 & 0 & 1&0 &0&3\\
 0&0&0&1&0&4\\
 1 & 1 & 1&1&1 &5\\
\end{block}
\end{blockarray}$$

It can be verified that $\text{A}\square_1 \text{B}=\text{A}\square_2\text{B}=\text{A}\square_3\text{B}=\text{C}.$ On the other hand, $\text{A}\square_1 \text{B}\neq\text{A}\square_4 \text{B}.$

\begin{remark}
{\rm The subposet matrix $A[\alpha]$ with  $\alpha=\{1,2,3\}$ forms a totally disconnected subposet matrix of $A.$ The submatrix  $D=A[{4}|{1,2,3}]$ has $3$ equal columns. By Theorem \ref{disconnectedsubposet}(i) the output poset matrix derived from $\square_i$ insertions at the labels $\{1,2,3\}$ are identical.}

\end{remark}

\section{Structural properties of operads from dual poset matrices}

The {\it dual} ${P^*}$ of a poset $P$ is the poset obtained from $P$ by inverting the order relation on the same elements. If $P=P^*$ then $P$ is called self-dual. Many important posets are self-dual. The dual $P^*$ of a poset $P$ is obtained simply by turning the Hasse diagram of $P$ upside down. Obviously posets are dual in pairs whenever they are not self-dual. Similarly, the definitions and theorems involving posets are dual in pairs, when they are not self-dual; if any theorem is true for all posets, so is its dual.

 Let $A^*$ denote the poset matrix associated to $\mathcal{P^*}$. Clearly, $A^*$ is the poset matrix obtained from the Hasse diagram of $\mathcal{P}$ by replacing label $i$ by $n-i+1$.  Using the $n \times n$ {\it backward identity matrix} defined as
\[
E =
\left[\begin{array}{cccc}
	0 & \cdots & 0 & 1 \\
	\vdots & \udots & 1 & 0 \\
	0 & \udots & \udots & \vdots \\
	1 & 0 & \cdots & 0
\end{array}\right],
\]
the matrix $A^*$ can be obtained from $A^F$ the {\it flip-transpose} of $A$ defined by $A^F=EA^TE$.
\bigskip

For a given $n$ and an index set $\alpha$, let $\alpha^F=\{n-i+1:i \in \alpha\}$. Then the following lemma follows from the definition of $\alpha^F$.
\begin{lemma}\label{dual2}
Let $A \in {\cal PM}(n)$. Then we have:
\begin{itemize}
	\item[(1)] $A[\alpha;\beta]^*=A^*[\beta^F;\alpha^F]$. In particular,  $A[\alpha]^*=A^*[\alpha^F]$.
	\item[(2)] $({\mathbbm{1}}^T_{m}\otimes A_{(i)})^*={\mathbbm{1}}_{m}\otimes A^{*(n-i+1)}$ and $({\mathbbm{1}}_{m}\otimes A^{(i)})^*={\mathbbm{1}}^T_{m}\otimes A^*_{(n-i+1)}$ .
\end{itemize}
\end{lemma}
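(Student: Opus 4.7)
My plan is to reduce both parts to entry-level computations based on the definition $A^* = E A^T E$, which I extend to rectangular $p \times q$ matrices by $M^* := E_q M^T E_p$. A one-step index chase yields the convenient formula $A^*_{p,q} = a_{n-q+1,\, n-p+1}$, which is the workhorse throughout.

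For Part~(1), I first note that $A[\alpha;\beta]^T = A^T[\beta;\alpha]$, so
\[
A[\alpha;\beta]^* = E_{|\beta|} \, A^T[\beta;\alpha] \, E_{|\alpha|}.
\]
I would then verify entry-wise that both sides agree. Writing $\alpha = \{i_1 < \cdots < i_k\}$ and $\beta = \{j_1 < \cdots < j_\ell\}$, the $q$-th smallest element of $\alpha^F$ is $n - i_{k-q+1} + 1$, and likewise for $\beta^F$. A direct calculation shows that both $A[\alpha;\beta]^*$ and $A^*[\beta^F;\alpha^F]$ have $(p,q)$-entry equal to $a_{i_{k-q+1},\, j_{\ell-p+1}}$. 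The specialization $\alpha = \beta$ yields the ``in particular'' statement.

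For Part~(2), the two identities are symmetric, so I would prove the first and obtain the second by identical reasoning. The key algebraic fact is the Kronecker compatibility $(B \otimes C)^* = B^* \otimes C^*$, which follows from $(B \otimes C)^T = B^T \otimes C^T$ together with the factorization $E_{pq} = E_p \otimes E_q$ (easily verified by checking the action on a lex-ordered pair of indices). Moreover ${\mathbbm{1}}_m^* = {\mathbbm{1}}_m^T$ and $({\mathbbm{1}}_m^T)^* = {\mathbbm{1}}_m$ are immediate since $E$ only permutes entries that are all $1$. Consequently,
\[
({\mathbbm{1}}_m^T \otimes A_{(i)})^* = ({\mathbbm{1}}_m^T)^* \otimes A_{(i)}^* = {\mathbbm{1}}_m \otimes A_{(i)}^*,
\]
and applying Part~(1) with $\alpha = \{i\}$ and $\beta = \{1,\ldots,i-1\}$ identifies $A_{(i)}^* = A^{*(n-i+1)}$, since $\{i\}^F = \{n-i+1\}$ and $\{1,\ldots,i-1\}^F = \{n-i+2,\ldots,n\}$. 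The second identity follows by swapping the roles of rows and columns.

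The main obstacle I expect is the careful bookkeeping of the reversed order induced by $\alpha \mapsto \alpha^F$: one must verify that the sorted enumeration of $\alpha^F$ matches the \emph{reversed} enumeration of $\alpha$, which is exactly what synchronizes with the row and column reversals produced by the $E$ factors. Once this is set up correctly, both parts collapse to routine manipulations of Kronecker products and backward-identity matrices.
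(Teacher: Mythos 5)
Your proposal is correct. The paper gives no proof of this lemma at all (it simply asserts that it ``follows from the definition of $\alpha^F$''), so there is no argument to compare against; your entry-level computation via $(M^*)_{p,q}=M_{k-q+1,\ell-p+1}$, together with the factorization $E_{pq}=E_p\otimes E_q$ and the mixed-product property to get $(B\otimes C)^*=B^*\otimes C^*$, is a sound and complete way to supply the missing details, and your bookkeeping of the reversed enumeration of $\alpha^F$ is exactly the point that needs care.
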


\begin{theorem}\label{dualthm}
 Let $B\in {\cal PM}(n)$ and $C\in{\cal PM}(m)$. Then we have:
 \begin{itemize}
 \item[(1)] If $A=B\square_i C$ then $A^{*}=B^{*}\square_{n-i+1} C^{*}$.
 \item[(2)] If $A=B \SG{\PosetComposition{0}{1}_i} C$ then $A^{*}=B^{*}
\SG{\PosetComposition{1}{0}_{n - i + 1}} C^{*}$.
 \end{itemize}
 \end{theorem}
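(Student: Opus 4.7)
The plan is to verify both equalities blockwise, using the pointwise identity $(X^*)_{ij} = X_{n-j+1,\, n-i+1}$ (immediate from $X^* = E X^T E$) together with Lemma~\ref{dual2}.

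For part~(1), I would write $M := B \square_i C$ as a $3 \times 3$ block matrix with row and column partitions $(i-1,\, m,\, n-i)$. The pointwise identity shows that entries belonging to block $(r,c)$ of $M$ land in block $(4-c,\, 4-r)$ of $M^*$, so $M^*$ is itself a $3 \times 3$ block matrix with partition $(n-i,\, m,\, i-1)$, matching the block structure of $B^* \square_{n-i+1} C^*$. It then suffices to check the nine blocks one at a time. The three diagonal blocks reduce via Lemma~\ref{dual2}(1) to $B_{22}^* = B^*[\alpha_{n-i+1}]$, $C^*$, and $B_{11}^* = B^*[\beta_{i-1}]$; the bottom-left block reduces similarly to $B_{21}^* = B^*[\beta_{i-1} \mid \alpha_{n-i+1}]$; and the two Kronecker-type off-diagonal blocks $\mathbbm{1}_m \otimes B^{(i)}$ and $\mathbbm{1}_m^T \otimes B_{(i)}$ flip-transpose to $\mathbbm{1}_m^T \otimes B^*_{(n-i+1)}$ and $\mathbbm{1}_m \otimes B^{*(n-i+1)}$ respectively, which is exactly the content of Lemma~\ref{dual2}(2).

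For part~(2), the argument runs in parallel except at the two blocks where $\SG{\PosetComposition{0}{1}_i}$ differs from $\square_i$. The block $(2,1) = \mathbbm{1}_m^T \otimes B_{(i)}$ is unchanged, and its flip-transpose is again $\mathbbm{1}_m \otimes B^{*(n-i+1)}$, matching the $(3,2)$ block of $B^* \SG{\PosetComposition{1}{0}_{n-i+1}} C^*$. The decisive block is $(3,2) = \mathrm{Min}_i(B, C)$: unwinding the flip-transpose, its $k$-th row equals the reverse of the $(m-k+1)$-th column of $\mathrm{Min}_i(B, C)$, which is $B^*_{(n-i+1)}$ when $m-k+1$ is minimal in the poset of $C$ and $\mathbb{O}$ otherwise.

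The crucial poset-theoretic step is then the equivalence ``$m-k+1$ is minimal in the poset of $C$ if and only if $k$ is maximal in the poset of $C^*$'', which holds because $C^*$ encodes the dual of the poset of $C$ under the label involution $j \mapsto m-j+1$, and passing to the dual interchanges minimal and maximal elements. With this equivalence in hand, the flip-transpose of $\mathrm{Min}_i(B, C)$ matches $\mathrm{Max}_{n-i+1}(B^*, C^*)$ by their very definitions, closing the comparison. The principal technical difficulty throughout is bookkeeping --- tracking how the flip-transpose reshuffles block positions and applying Lemma~\ref{dual2} correctly to the various non-square submatrices --- while the only conceptually new ingredient, confined to part~(2), is the min/max interchange under poset duality.
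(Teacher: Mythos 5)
Your proposal is correct and follows essentially the same route as the paper: a blockwise computation of the flip-transpose $EA^TE$ using Lemma~\ref{dual2} to identify the diagonal, corner, and Kronecker-type blocks for part~(1). Your treatment of part~(2) --- reducing the flip-transpose of ${\rm Min}_i(B,C)$ to ${\rm Max}_{n-i+1}(B^*,C^*)$ via the observation that the label involution $j\mapsto m-j+1$ exchanges minimal and maximal elements --- is exactly the argument the paper gives (in its later theorem relating $\SG{\PosetComposition{1}{0}}$ and $\SG{\PosetComposition{0}{1}}$ under duality), and in fact spells out the details that the paper's proof of part~(2) leaves as ``similar to (1)''.
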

  \begin{proof}
 For brevity, let
 \[\alpha_i= \{1,\ldots, i-1\}\quad \text{and} \quad \beta_{n-i}=\{i+1,\ldots, n\}.
 \]
 Then \[\alpha_i^F= \{n-i+2,\ldots, n\}=\beta_{i-1}\quad \text{and} \quad \beta_{n-i}^F=\{1,\ldots, n-i\}=\alpha_{n-i+1}.
 \]
(1) Let $A=B\square_i C$. By lemma \ref{dual2}, we have:
 \begin{align}
 	A^*=(B\square_i C)^* &=
 	\left[\begin{array}{c|c|c}
 		B[\alpha_i] &  \mathbb{O} &  \mathbb{O}\\
 		\hline
 		{\mathbbm{1}}^T_{m}\otimes B_{(i)} & C & \mathbb{O}\\
 		\hline
 		B[\beta_{n-i};\alpha_i] & {\mathbbm{1}}_{m}\otimes B^{(i)} & B[\beta_{n-i}]
 	\end{array}\right]^* \nonumber\\
 	&=
 	\left[\begin{array}{c|c|c}
 		B[\beta_{n-i}]^* & \mathbb{O} & \mathbb{O}\\
 		\hline
 		{\mathbbm{1}}_{m}^T\otimes B^*_{(n-i+1)} & C^* & \mathbb{O}\\
 		\hline
 		B[\beta_{n-i};\alpha_i]^* & {\mathbbm{1}}_{m}\otimes B^{*(n-i+1)} & B[\alpha_i]^*
 	\end{array}\right]\nonumber\\
 	&=
 	\left[\begin{array}{c|c|c}
 		B^*[\beta_{n-i}^F] & \mathbb{O} & \mathbb{O}\\
 		\hline
 		{\mathbbm{1}}_{m}^T\otimes B^*_{(n-i+1)} & C^* & \mathbb{O}\\
 		\hline
 		B^*[\alpha_i^F;\beta_{n-i}^F] & {\mathbbm{1}}_{m}\otimes B^{*(n-i+1)} & B^*[\alpha_i^F]
 	\end{array}\right]\nonumber\\
 &=
 \left[\begin{array}{c|c|c}
 	B^*[\alpha_{n-i+1}] & \mathbb{O} & \mathbb{O}\\
 	\hline
 	{\mathbbm{1}}_{m}^T\otimes B^*_{(n-i+1)} & C^* & \mathbb{O}\\
 	\hline
 	B^*[\beta_{i-1};\alpha_{n-i+1}] & {\mathbbm{1}}_{m}\otimes B^{*(n-i+1)} & B^*[\beta_{i-1}]
 \end{array}\right]\nonumber\\
 	&=B^{*}\square_{n-i+1} C^{*}.\nonumber	
 \end{align}
 (2) It can be also shown in the similar method used in (1).
 	\end{proof}

\begin{corollary} Let $A\in{\cal PM}(n)$ and $B\in{\cal PM}(m)$. Then
$A \square_i B$ is self dual if and only if $A$ and $B$ are self dual.
\end{corollary}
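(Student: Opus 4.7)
The plan is to derive this biconditional as a direct consequence of \tref{dualthm}(1), which supplies the core identity
\[ (A \square_i B)^* = A^* \square_{n-i+1} B^*. \]
Self-duality of $A \square_i B$ is the equation $A \square_i B = A^* \square_{n-i+1} B^*$, so I would reduce both implications to a block-by-block comparison of the two matrix presentations \eref{eq1} used at insertion indices $i$ and $n-i+1$ respectively.

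For the backward direction, I would assume $A = A^*$ and $B = B^*$, and substitute into the identity above to obtain $(A \square_i B)^* = A \square_{n-i+1} B$. The goal is then to check that this matches $A \square_i B$ by matching blocks: the central $m \times m$ block is $B$ on both sides, the outer principal blocks $A_{11}$, $A_{22}$ and the off-diagonal block $A_{21}$ can be read off from the self-duality identity $A = A^*$ using \lref{dual2}(1), and the Kronecker-product row and column blocks built from $A_{(i)}$, $A^{(i)}$ are translated into the corresponding data of $A^*$ at index $n-i+1$ via \lref{dual2}(2).

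For the forward direction, assuming $A \square_i B$ is self-dual, the identity from \tref{dualthm}(1) becomes $A \square_i B = A^* \square_{n-i+1} B^*$. Matching the central $m \times m$ blocks forces $B = B^*$; matching the surrounding blocks, namely $A_{11}$ against $A^*_{11}$, $A_{22}$ against $A^*_{22}$, $A_{21}$ against $A^*_{21}$, and the row/column data $A_{(i)}$, $A^{(i)}$ against $A^*_{(n-i+1)}$, $A^{*(n-i+1)}$, together with \lref{dual2}, repackages into the single matrix equality $A = A^*$.

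The main obstacle I anticipate is the careful cross-indexing between the two block decompositions: tracking which block of $A \square_i B$ at position $i$ corresponds to which block of $A^* \square_{n-i+1} B^*$ at position $n-i+1$, and invoking \lref{dual2}(2) to convert a Kronecker product involving the row $A_{(i)}$ of $A$ into one involving the column $A^{*(n-i+1)}$ of $A^*$. Once this translation is in place, both directions reduce to a routine entrywise comparison.
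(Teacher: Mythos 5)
The paper states this corollary without any proof (it is presented as an immediate consequence of \tref{dualthm}), so there is no argument of record to compare against; what follows assesses your proposal on its own terms. Your starting point, reducing everything to the identity $(A \square_i B)^* = A^* \square_{n-i+1} B^*$, is the natural one, but the block-matching step that is supposed to close the backward direction does not work, and the difficulty you flag as ``careful cross-indexing'' is not a bookkeeping issue but a fatal one. Substituting $A=A^*$ and $B=B^*$ gives $(A\square_i B)^* = A\square_{n-i+1}B$, and you then assert that this agrees with $A\square_i B$ because ``the central $m\times m$ block is $B$ on both sides.'' It is not: in $A\square_i B$ the copy of $B$ occupies rows and columns $i,\dots,i+m-1$, whereas in $A\square_{n-i+1}B$ it occupies rows and columns $n-i+1,\dots,n-i+m$, so the two decompositions of \eref{eq1} are not aligned unless $i=n-i+1$. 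The asserted equality is in fact false in general: take $A=\left(\begin{smallmatrix}1&0\\0&1\end{smallmatrix}\right)$ and $B=\left(\begin{smallmatrix}1&0\\1&1\end{smallmatrix}\right)$, both self-dual, and $i=1$; then
\[
A\square_1 B=\begin{pmatrix}1&0&0\\1&1&0\\0&0&1\end{pmatrix}
\neq
\begin{pmatrix}1&0&0\\0&1&0\\0&1&1\end{pmatrix}=A\square_2 B=(A\square_1 B)^*,
\]
so $A\square_1 B$ is not self-dual even though $A$ and $B$ are. The same misalignment undermines the forward direction: from $A\square_i B=A^*\square_{n-i+1}B^*$ you cannot ``match the central blocks'' to conclude $B=B^*$, because $B$ and $B^*$ sit in different positions of the same matrix.

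The upshot is that the gap is not only in your write-up but in the statement itself: for the notion of self-duality $M=M^*$ used in Section 3, the ``if'' direction fails. Your argument does prove the corollary when $i=n-i+1$ (that is, $n$ odd and $i=(n+1)/2$), where the two block decompositions coincide and the entrywise comparison you describe goes through via \lref{dual2}; it would also go through under an additional hypothesis forcing $A\square_i B=A\square_{n-i+1}B$, such as the symmetry conditions on $A$ appearing in \tref{connectedsubposet}. Any correct treatment must either add such a restriction on $i$ or on $A$, or reinterpret self-duality up to permutation equivalence --- and in the latter case a different argument (and, one can check, a further hypothesis) is still required, since substitution at position $i$ and at position $n-i+1$ need not produce isomorphic posets.
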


\noindent The following example illustrates the two output poset matrices generated from the partial composition operation $\scalebox{0.7}{$\square$}_i$ when the two input poset matrices of the operations are dual to each other.

\begin{example}
\end{example}

$\begin{blockarray}{cccc}
1 & 2&3\\
\begin{block}{(ccc)c}
 1 & 0&0  &1\\
 1 & 1&0 &2\\
 1 & 0&1 &3 \\
\end{block}
\end{blockarray}\scalebox{0.7}{$\square$}_3\;\;
\begin{blockarray}{ccc}
1 & 2 \\
\begin{block}{(cc)c}
 1 & 0&1 \\
 1 & 1&2\\
\end{block}
\end{blockarray}=
\begin{blockarray}{ccccc}
1&2 & 3 &4\\
\begin{block}{(cccc)c}
 1 & 0 & 0&0 &1\\
 1 & 1 & 0&0 &2\\
 1 & 0 & 1&0 &3\\
 1 & 0 & 1&1 &4\\
\end{block}
\end{blockarray}\quad\Leftrightarrow\quad$
\begin{minipage}[c]{.20\textwidth}
\begin{tikzpicture}
  [scale=.7,auto=center,every node/.style={circle,fill=blue!20}]

\node[rectangle] (c1) at (6.2,2.5) {2};
  \node[rectangle] (c2) at (7,1.5) {1};
  \node[rectangle] (c4) at (7.8,2.5) {3};
  \node[rectangle] (c8) at (7.8,3.7)  {4};

      \draw (c1) -- (c2);
       \draw (c4) -- (c2);
   \draw (c8) -- (c4);

 \end{tikzpicture}

\end{minipage}

The dual construction results to the following.

$\begin{blockarray}{cccc}
1 & 2&3\\
\begin{block}{(ccc)c}
 1 & 0&0  &1\\
 0 & 1&0 &2\\
 1 & 1&1 &3 \\
\end{block}
\end{blockarray}\scalebox{0.7}{$\square$}_1\;\;
\begin{blockarray}{ccc}
1& 2 \\
\begin{block}{(cc)c}
 1 & 0&1 \\
 1 & 1&2\\
\end{block}
\end{blockarray}=
\begin{blockarray}{ccccc}
1&2& 3 &4\\
\begin{block}{(cccc)c}
 1 & 0 & 0&0 &1\\
1 & 1 & 0&0 &2\\
 0 &0 & 1&0 &3\\
 1 & 1 & 1&1 &4\\
\end{block}
\end{blockarray}\quad\Leftrightarrow\quad$
\begin{minipage}[c]{.20\textwidth}
\begin{tikzpicture}
  [scale=.7,auto=center,every node/.style={circle,fill=blue!20}]

\node[rectangle] (c1) at (6.2,3) {3};
  \node[rectangle] (c2) at (7.8,1.7) {1};
  \node[rectangle] (c4) at (7.8,3) {2};
  \node[rectangle] (c8) at (7,4)  {4};

      \draw (c4) -- (c2);
    \draw (c8) -- (c4);
   \draw (c8) -- (c1);

 \end{tikzpicture}

 \end{minipage}

\begin{theorem}
$A\SG{\PosetComposition{1}{0}_{n - i + 1}} B=(A^* \SG{\PosetComposition{0}{1}_i} B^*)^*$ where $A \in \mathcal{PM}(n)$.
\end{theorem}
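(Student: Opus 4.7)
The statement is essentially a corollary of Theorem~\ref{dualthm}(2), so my plan is to derive it by invoking that theorem and the involutive nature of the dual operation, without any fresh matrix manipulation.

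First I would observe that the dual operation $X \mapsto X^* = E X^T E$ is an involution on $\mathcal{PM}(n)$: since $E^T = E$ and $E^2 = I_n$, a direct computation gives $(X^*)^* = E(EX^TE)^TE = EE X EE = X$. In particular, for any $A \in \mathcal{PM}(n)$ and $B \in \mathcal{PM}(m)$, we have $(A^*)^* = A$ and $(B^*)^* = B$.

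Next, I would apply Theorem~\ref{dualthm}(2) with the roles of its $B$ and $C$ played by $A^*$ and $B^*$ respectively. That theorem asserts that whenever $M = B \SG{\PosetComposition{0}{1}_i} C$ with $B \in \mathcal{PM}(n)$, one has $M^* = B^* \SG{\PosetComposition{1}{0}_{n-i+1}} C^*$. Specialising, set
\[
M = A^* \SG{\PosetComposition{0}{1}_i} B^*,
\]
so that $M^* = (A^*)^* \SG{\PosetComposition{1}{0}_{n-i+1}} (B^*)^* = A \SG{\PosetComposition{1}{0}_{n-i+1}} B$ by the involutivity observation. Rewriting $M^* = (A^* \SG{\PosetComposition{0}{1}_i} B^*)^*$ yields exactly the identity in the statement.

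The only real step in this argument is the prior Theorem~\ref{dualthm}(2); once it is available, there is no calculational obstacle, just the bookkeeping of applying the involution. If Theorem~\ref{dualthm}(2) had not been stated, I would instead have to expand both sides of the desired equation using the block definitions~\eqref{eq2} and~\eqref{eq3} together with Lemma~\ref{dual2}, and verify block by block that flipping the ${\rm Min}_i$ entries via $*$ yields the ${\rm Max}_{n-i+1}$ entries (since minimal elements of the poset of $B^*$ are precisely the maximal elements of the poset of $B$, relabelled by $j \mapsto m-j+1$). That cross-check is implicitly the content of Theorem~\ref{dualthm}(2), which is why invoking it makes the present proof essentially a one-line deduction.
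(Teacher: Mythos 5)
Your proof is correct, but it takes a genuinely different route from the paper's. The paper proves this identity by direct block computation: it expands $(A^* \SG{\PosetComposition{0}{1}_i} B^*)^*$ using Lemma~\ref{dual2}, identifies the middle block $M$ whose columns are $\mathbb{O}$ or $\bc_A(i)$ according to minimality in the poset of $B^*$, and then uses Lemma~\ref{minmax} to check that $M^*$ has rows $\mathbb{O}$ or $\br_A(n-i+1)$ according to maximality in the poset of $B$, so that the flipped matrix matches the definition of $\SG{\PosetComposition{1}{0}_{n-i+1}}$. You instead observe that $X\mapsto EX^TE$ is an involution (correct: $E^T=E$ and $E^2=I$) and deduce the identity formally by substituting $A^*$ and $B^*$ into Theorem~\ref{dualthm}(2). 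Your deduction is valid and much shorter, and it makes explicit that the present theorem is logically equivalent to Theorem~\ref{dualthm}(2) modulo involutivity. The one caveat is that the paper's proof of Theorem~\ref{dualthm}(2) is itself only the remark ``similar to (1),'' and the detailed Min--Max verification it omits is precisely the computation carried out in the paper's proof of the present theorem; so if one reads the present theorem as supplying the missing detail for Theorem~\ref{dualthm}(2), your argument would be circular. As the paper is actually organized, with Theorem~\ref{dualthm}(2) asserted beforehand, your one-line deduction is legitimate, but you should flag this dependency; the direct computation (which you sketch in your final paragraph as a fallback) is what makes the result self-contained.
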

\begin{proof} Let $\alpha= \{1,\ldots, i-1\}$ and $\beta=\{i+1,\ldots, n\}.$ Then $\alpha^F= \{n-i+2,\ldots, n\}$ and $\beta^F=\{1,\ldots, n-i\}.$ Then it follows from Lemma \ref{dual2} that
\begin{align}
	(A^* \SG{\PosetComposition{0}{1}_i} B^*)^* &=
	\left[\begin{array}{c|c|c}
		A^*[\alpha] & \mathbb{O} & \mathbb{O}\\
		\hline
		\br_{A^*}(i)\otimes {\bf{1}} & B^* & \mathbb{O}\\
		\hline
		A^*[\beta;\alpha] & M & A^*[\beta]
	\end{array}\right]^* \nonumber\\
&=
\left[\begin{array}{c|c|c}
	A[\beta^F] & \mathbb{O} & \mathbb{O}\\
	\hline
	M^* & B & \mathbb{O}\\
	\hline
	A[\alpha^F;\beta^F] & \bc_{A}(n-i+1)\otimes {\bf{1}}^T & A[\alpha^F],
\end{array}\right],\label{eq4}
\end{align}
where $M$ is of the form:
\begin{equation*}
	M[;j] =
	\begin{cases}
		\bf{0} & \text{if $j$ is not minimal associated to $B^*$},\\
		\bc_A(i) & \text{if $j$ is minimal associated to $B^*$}.
	\end{cases}
\end{equation*}
By Lemma \ref{minmax} and Lemma \ref{dual2},
\begin{equation*}
	M^*[j;] =
	\begin{cases}
		\bf{0} & \text{if $j$ is not maximal associated to $B$},\\
		\br_A(n-i+1) & \text{if $j$ is maximal associated to $B$},
	\end{cases}
\end{equation*}
which implies that the right-hand side of equation \eqref{eq4} is equal to $A
\SG{\PosetComposition{1}{0}_{n - i + 1}} B$.
\end{proof}

\begin{definition}
{\rm Let $\text{A}$ and $\text{B}$ be poset matrices of size $n\times n.$  If there exist a subposet matrix $\text{A}^{\prime\prime}$ of A which is a dual disconnected poset matrix of the  subposet matrix $\text{B}^{\prime\prime}$ of $\text{B}$ and the corresponding entries of $A$ and $B$ are always equal except at the region covered by the subposet matrices $\text{A}^{\prime\prime}$  and  $\text{B}^{\prime\prime}$ respectively,  then we shall henceforth refer to  the poset matrix $A$  as a \textbf{semi-equidual} of the poset matrix  $B$ and vice-versa.}
\end{definition}

\begin{example}{\rm Semi-equidual poset matrices of order $4$ and $5$ are as follows:}
\end{example}
$$A=\begin{blockarray}{ccccc}
1&2 & 3 &4\\
\begin{block}{(cccc)c}
 1 & 0 & 0&0 &1\\
 1 & 1 & 0&0 &2\\
 1 & 1 & 1&0 &3\\
 1 & 0 & 0&1 &4\\
\end{block}
\end{blockarray}\quad B=\begin{blockarray}{ccccc}
1&2 & 3 &4\\
\begin{block}{(cccc)c}
 1 & 0 & 0&0 &1\\
 1 & 1 & 0&0 &2\\
 1 & 0 & 1&0 &3\\
 1 & 0 & 1&1 &4\\
\end{block}
\end{blockarray}.$$

$$C=\begin{blockarray}{cccccc}
1&2 & 3 &4&5\\
\begin{block}{(ccccc)c}
 1 & 0 & 0&0&0 &1\\
 0 & 1 & 0&0& 0&2\\
 0 & 1 & 1&0 &0&3\\
 1&1&1&1&0&4\\
 1 & 1 & 1&1&1 &5\\
\end{block}
\end{blockarray}\quad D=\begin{blockarray}{cccccc}
1&2 & 3 &4&5\\
\begin{block}{(ccccc)c}
 1 & 0 & 0&0&0 &1\\
 1 & 1 & 0&0& 0&2\\
 0 & 0 & 1&0 &0&3\\
 1&1&1&1&0&4\\
 1 & 1 & 1&1&1 &5\\
\end{block}
\end{blockarray}.$$

\begin{remark}
{\rm The poset matrices $A$ and $B$ are semi-equidual poset matrices since as the disconnected subposet matrix $A[\{2,3,4\}]$  is a dual of the disconnected subposet matrix $B[\{2,3,4\}]$  and all entries of the column $1$  and row $1$ of $A$ are equal to the corresponding entries of $B$. Similarly, the poset matrices $C$ and $D$ are semi-equidual since as the disconnected subposet matrix $C[\{1,2,3\}]$ is a dual to the disconnected subposet matrix $D[\{1,2,3\}]$ and all the entries in the fourth row, fourth column, fifth row and fifth column of $C$ are equal to the corresponding entries of $D.$}
\end{remark}

\begin{theorem}\label{semiequidual}
 Let $A\in {\cal PM}(n)$ and let $\text{B}$ be a totally connected poset matrix of size $m\times  m.$Then the following holds.
\begin{enumerate}
\item If there exist a totally disconnected subposet matrix $A[\alpha]$ defined on $\alpha=\{1,...,k\}$ with $1<k<n$ such that the submatrix $D:=A[\{k+1,...,n\}|\{1,...,k\}]$ satisfies the condition that $D^{(p)}=D^{(p+1)}$ for each $p\in\{1,...,k-1\}.$ Then $\text{A}\square_1\text{B}$ \text{ is a semi-equidual poset matrix of} $\text{A}\square_k \text{B}$ and vice-versa.
\item If there exist a totally disconnected subposet matrix $A[\alpha]$ defined on $\alpha=\{k,...,n\}$ with $k\geq 2$ such that the submatrix $D:=A[\{k,...,n\}|\{1,...,k-1\}]$ satisfies the condition that $D_{(p)}=D_{(p+1)}$ for each $p\in\{1,...,n-k\}.$ Then $\text{A}\square_k\text{B}$ \text{ is a semi-equidual poset matrix of} $\text{A}\square_n \text{B}$ and vice-versa.
	\item If there exist a totally disconnected subposet matrix $A[\alpha]$ defined on $\alpha=\{d,...,k\}$ with $1<d<k<n$ such that the submatrix $D:=A[\{k,...,n\}|\{1,...,k-1\}]$ satisfies the condition that $D_{(p)}=D_{(p+1)}$ for each $p\in\{1,...,n-k\}$ where $D_{(p)}$ is a   \textbf{1}-vector or a \textbf{0}-vector of size $1 \times(k-1).$ Then $\text{A}\square_d\text{B}$ \text{ is a semi-equidual poset matrix of} $\text{A}\square_k \text{B}$ and vice-versa.
\end{enumerate}
\end{theorem}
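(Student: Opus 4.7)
The plan is to prove the claim by explicit block-matrix bookkeeping, carrying out case~(1) in detail and indicating how cases~(2) and~(3) follow by analogous arguments. The core idea is that the totally disconnected subposet matrix $A[\alpha]$ together with the uniformity hypothesis on $D$ force $A\square_d B$ and $A\square_k B$ (for the two extremes $d,k$ of $\alpha$) to differ only in a principal block of the form $B\oplus I$ versus $I\oplus B$; these are duals of each other in the flip-transpose sense because the totally connected matrix $B$ is a chain and hence self-dual.

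Concretely, for case~(1) with $\alpha=\{1,\ldots,k\}$, I let $d$ denote the common column of $D$. The hypotheses translate into $A[\{1,\ldots,k\}]=I_k$, which forces $A_{(j)}=\mathbf{0}$ and $A^{(j)}=\bigl(\mathbf{0}_{k-j}^{\top},\,d^{\top}\bigr)^{\top}$ for every $j\in\alpha$, and moreover $A[\{k+1,\ldots,n\}\mid\{1,\ldots,k-1\}]=\mathbbm{1}_{k-1}\otimes d$. Substituting these identities into the definition \eqref{eq1} and further decomposing $A[\{2,\ldots,n\}]$ along the partition $\{2,\ldots,k\}\cup\{k+1,\ldots,n\}$ in the case $i=1$, I obtain the block forms
\[
A\square_1 B=\begin{pmatrix}B&\mathbb{O}&\mathbb{O}\\\mathbb{O}&I_{k-1}&\mathbb{O}\\\mathbbm{1}_m\otimes d&\mathbbm{1}_{k-1}\otimes d&A[\{k+1,\ldots,n\}]\end{pmatrix},
\]
\[
A\square_k B=\begin{pmatrix}I_{k-1}&\mathbb{O}&\mathbb{O}\\\mathbb{O}&B&\mathbb{O}\\\mathbbm{1}_{k-1}\otimes d&\mathbbm{1}_m\otimes d&A[\{k+1,\ldots,n\}]\end{pmatrix}.
\]

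Setting $S=\{1,\ldots,k+m-1\}$, the restrictions of the two output matrices to $S$ are $B\oplus I_{k-1}$ and $I_{k-1}\oplus B$, respectively, both of which are manifestly disconnected poset matrices. Applying Lemma~\ref{dual2}(1) blockwise, together with $I_{k-1}^{*}=I_{k-1}$ and the self-duality $B^{*}=B$ (since a totally connected poset matrix is the matrix of a chain and a chain is self-dual), yields $(B\oplus I_{k-1})^{*}=I_{k-1}\oplus B$, so the two restrictions are dual to one another. It remains to verify that the corresponding entries outside $S\times S$ coincide: the upper-right $(k+m-1)\times(n-k)$ blocks vanish by upper-triangularity in both matrices; the bottom-right $(n-k)\times(n-k)$ blocks equal $A[\{k+1,\ldots,n\}]$ in both matrices; and the bottom-left $(n-k)\times(k+m-1)$ blocks coincide with $\mathbbm{1}_{k+m-1}\otimes d$ in both matrices, the two differing only in how this horizontal concatenation is internally partitioned between the $B$-block and the $I$-block. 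By the definition of semi-equidual, $A\square_1 B$ and $A\square_k B$ are therefore semi-equidual.

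The main technical point is this last identification of the bottom-left cross block, which is precisely where the equal-columns hypothesis on $D$ is used in an essential way. Cases~(2) and~(3) are handled by the same scheme with the obvious modifications: in~(2) one takes $S=\{k,\ldots,n+m-1\}$ and uses the equal-rows hypothesis on $D$ to match the upper-right cross block; in~(3) one takes $S=\{d,\ldots,k+m-1\}$ and invokes the uniformity-and-vector-type assumption on the rows of $D$ (which forces $D=\mathbb{O}$ when $d<k$, in view of the total disconnection of $A[\alpha]$) so that both cross-block regions vanish in both matrices, and the self-dual chain $B$ again supplies the required duality of the principal middle block.
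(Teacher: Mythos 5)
Your proof is correct and follows essentially the same route as the paper's: both decompose $A\square_1 B$ and $A\square_k B$ into the principal $(k+m-1)\times(k+m-1)$ block, the bottom-left cross block, and the bottom-right block, show the principal blocks $B\oplus I_{k-1}$ and $I_{k-1}\oplus B$ are dual disconnected poset matrices, and use the equal-columns (resp.\ equal-rows) hypothesis on $D$ to match everything outside. Your observation that the duality of the principal blocks follows from self-duality of the chain $B$ is a slightly cleaner packaging of the paper's direct flip-transpose index computation, but it is the same argument.
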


\begin{proof}
Let $1,k\in\alpha$  as stated  in (1).  Consider that the output poset matrix structure of $A\square_{1}B=E$ can be partitioned into three submatrices $E1, E2$ and $E3$ such that:
\begin{itemize}
\item $E1=E[\{1,...,k+m-1\}]$  of size $(k+m-1)\times (k+m-1).$
\item $E2=E[\{k+m,...,n+m-1\}\mid\{1,...,k+m-1\}]$ of size $(n-k)\times (k+m-1).$
\item $E3=E[\{k+m,...,n+m-1\}]$ of size $(n-k)\times(n-k).$
\end{itemize}
\noindent $E1$ can be subpartitioned such that $E[\{1,...,m\}]=B$, $E[\{1+m,...,k+m-1\}\mid\{1,...,m\} ]$ is a zero matrix of size $(k-1)\times m,$ and $E[\{m+1,...,k+m-1\}]$ is either the connected poset matrix of size $1$ when $k=2$ or the totally disconnected poset matrix of size $k-1$ when $k>2.$

\noindent$E2$ can  be subpartitioned such that $E[\{k+m,...,n+m-1\}\mid\{1,...,m\}]={\mathbbm{1}}_{m}\otimes A^{(1)}[\{k+1,...,n\}\mid\{1,..,k\}]$,  and $E[\{k+m,...,n+m-1\}\mid\{m+1,...,k+m-1\}]=A[\{k+1,...,n\}\mid\{2,...,k\}].$

\noindent$E3=A[\{k+1,...,n\}].$

At the insertion point $k$, similarly consider that the output poset matrix structure of $A\square_{k}B=F$ can be partitioned into three submatrices $F1, F2$ and $F3$ such that:
\begin{itemize}
\item $F1=F[\{1,...,k+m-1\}]$  of size $(k+m-1)\times (k+m-1).$
\item $F2=F[\{k+m,...,n+m-1\}\mid\{1,...,k+m-1\}]$ of size $(n-k)\times (k+m-1).$
\item $F3=F[\{k+m,...,n+m-1\}]$ of size $(n-k)\times(n-k).$
\end{itemize}
\noindent $F1$ can be subpartitioned such that $F[\{1,...,k-1\}]$  is either the connected poset matrix of size $1$ when $k=2$ or the totally disconnected poset matrix of size $k-1$ when $k>2$
, $F[\{k,...,k+m-1\}\mid\{1,...,k-1\} ]$ is a zero matrix of size $m\times (k-1),$ and $F[\{k,...,k+m-1\}]=B.$

\noindent$F2$ can  be subpartitioned such that $F[\{k+m,...,n+m-1\}\mid\{k,...,k+m\}]={\mathbbm{1}}_{m}\otimes A^{(k)}[\{k+1,...,n\}\mid\{1,..,k\}]$, $F[\{k+m,...,n+m-1\}\mid\{1,...,k-1\}]=A[\{k+1,...,n\}\mid\{1,..,k-1\}].$

\noindent$F3=A[\{k+1,...,n\}].$

\noindent From the matrix structure of $F$ and $E,$ it follows that:
\begin{itemize}
\item For $E1=(e_{i,j})$ and $F1=(f_{i,j})$ both of size $k+m-1,$  it can be verified that $F1$ is a dual poset matrix of $E1$ since $F1=(e_{{k+m-j},{k+m-i}})$ and $E1=(f_{{k+m-j},{k+m-i}}).$ By definition \ref{disconnectMAT},  $F1$ and $E1$ are both disconnected poset matrix structure.
\item  $E2=F2$ since as each of the columns in the submatrix $A[\{k+1,...,n\}|\{1,...,k\}]$ has equal corresponding entries.
\item  $E3=F3.$
\end{itemize}

Thus, $E$ is a semi-equidual poset matrix of $F$ and vice-versa.

\noindent Let $k,n\in\alpha$  as stated  in (2).  Consider that the output poset matrix structure of $A\square_{k}B=G$ can be partitioned into three submatrices $G1, G2$ and $G3$ such that:
\begin{itemize}
\item $G1=G[\{1,...,k-1\}]$  of size $(k-1)\times (k-1).$
\item $G2=G[\{k,...,n+m-1\}\mid\{1,...,k-1\}]$ of size $(n+m-k)\times (k-1).$
\item $G3=G[\{k,...,n+m-1\}]$ of size $(n+m-k)\times(n+m-k).$
\end{itemize}
\noindent $G3$ can be subpartitioned such that $G[\{k,...,k+m\}]=B$, $G[\{k+m-1,...,n+m-1\}\mid\{k,...,k+m-1\} ]$  is a zero matrix of size $(n-k)\times m,$ and $G[\{k+m-1,...,n+m-1\}]$ is either the connected poset matrix of size $1$ when $k=2$ or the totally disconnected poset matrix of size $(n-k)\times(n-k)$ when $k>2.$

\noindent$G2$ can  be subpartitioned such that $G[\{k,...,k+m-1\}\mid\{1,...,k-1\}]={\mathbbm{1}}_m^T\otimes A_{(k)}[\{k,...,n\}\mid\{1,..,k-1\}]$,  and $G[\{k+m,...,n+m-1\}\mid\{1,...,k-1\}]=A[\{k,...,n\}\mid\{1,...,k-1\}].$

\noindent$G1=A[\{1,...,k-1\}].$

 At the insertion point $n,$ consider that the output poset matrix structure of $A\square_{n}B=H$ can be partitioned into three submatrices $H1, H2$ and $H3$ such that:
\begin{itemize}
\item $H1=H[\{1,...,k-1\}]$  of size $(k-1)\times (k-1).$
\item $H2=H[\{k,...,n+m-1\}\mid\{1,...,k-1\}]$ of size $(n+m-k)\times (k-1).$
\item $H3=H[\{k,...,n+m-1\}]$ of size $(n+m-k)\times(n+m-k).$
\end{itemize}
\noindent $H3$ can be subpartitioned such that $H[\{n,...,n+m-1\}]=B$, $H[\{n,...,n+m-1\}\mid\{k,...,n-1\} ]$ is the zero matrix of size $m\times (n-k),$ and $H[\{k,...,n-1\}]$ is either the connected poset matrix of size $1$ when $k=2$ or the totally disconnected poset matrix of size $(n-k)\times(n-k)$ when $k>2.$

\noindent$H2$ can  be subpartitioned such that $H[\{n,...,n+m-1\}\mid\{1,...,k-1\}]={\mathbbm{1}}_m^T\otimes A_{(n)}[\{k,...,n\}\mid\{1,..,k-1\}]$,  and $H[\{k,...,n-1\}\mid\{1,...,k-1\}]=A[\{k,...,n-1\}\mid\{1,...,k-1\}].$

\noindent$H1=A[\{1,...,k-1\}].$

\noindent From the matrix structure of $G$ and $H,$ it follows that:
\begin{itemize}
\item For $G3=(g_{i,j})$ and $H3=(h_{i,j})$ both of size $(n+m-k)\times(n+m-k),$  it can be verified that $H3$ is a dual poset matrix of $G3$ since $H3=(g_{{n+m-k+1-j},{n+m-k+1-i}})$ and $G3=(h_{{n+m-k+1-j},{n+m-k+1-i}}).$ By definition \ref{disconnectMAT},  $H3$ and $G3$ are disconnected poset matrix structures.
\item  $H2=G2.$ under the condition that each of the rows in the submatrix $A[\{k,...,n\}|\{1,...,k-1\}]$ has equal corresponding entries.
\item  $H3=G3.$
\end{itemize}
Thus,  $G$ is a semi-equidual poset matrix of $H$ and vice-versa.

The proof of (3) follows from the arguments in (1) and (2).

\end{proof}

\begin{example}
\end{example}
$\begin{blockarray}{ccccc}
1 & 2&3&4\\
\begin{block}{(cccc)c}
 1 & 0&0  &0&1\\
 1 & 1&0 &0&2\\
 1 & 0&1 &0&3 \\
 1&0&0&1&4\\
\end{block}
\end{blockarray}\scalebox{0.7}{$\square$}_2\;\;
\begin{blockarray}{ccc}
1 & 2 \\
\begin{block}{(cc)c}
 1 & 0&1 \\
 1 & 1&2\\
\end{block}
\end{blockarray}=
\begin{blockarray}{cccccc}
1&2 & 3 &4&5\\
\begin{block}{(ccccc)c}
 1 & 0 & 0&0&0 &1\\
 1 & 1 & 0&0& 0&2\\
 1 & 1 & 1&0 &0&3\\
 1&0&0&1&0&4\\
 1 & 0 & 0&0&1 &5\\
\end{block}
\end{blockarray}\quad\Leftrightarrow\quad$
\begin{minipage}[c]{.20\textwidth}
\begin{tikzpicture}
  [scale=.7,auto=center,every node/.style={circle,fill=blue!20}]

\node[rectangle] (c1) at (5.5,3) {2};
\node[rectangle] (c7) at (5.5,4.5) {3};
  \node[rectangle] (c2) at (7,1.5) {1};
  \node[rectangle] (c4) at (7,3) {4};
   \node[rectangle] (c5) at (8.5,3) {5};
      \draw (c5) -- (c2);
      \draw (c1) -- (c2);
        \draw (c2) -- (c4);
         \draw (c1) -- (c7);

 \end{tikzpicture}

\end{minipage}

$\begin{blockarray}{ccccc}
1 & 2&3&4\\
\begin{block}{(cccc)c}
 1 & 0&0  &0&1\\
 1 & 1&0 &0&2\\
 1 & 0&1 &0&3 \\
 1&0&0&1&4\\
\end{block}
\end{blockarray}\scalebox{0.7}{$\square$}_4\;\;
\begin{blockarray}{ccc}
1 & 2 \\
\begin{block}{(cc)c}
 1 & 0&1 \\
 1 & 1&2\\
\end{block}
\end{blockarray}=
\begin{blockarray}{cccccc}
1&2 & 3 &4&5\\
\begin{block}{(ccccc)c}
 1 & 0 & 0&0&0 &1\\
 1 & 1 & 0&0& 0&2\\
 1 & 0 & 1&0 &0&3\\
 1&0&0&1&0&4\\
 1 & 0 & 0&1&1 &5\\
\end{block}
\end{blockarray}\quad\Leftrightarrow\quad$
\begin{minipage}[c]{.20\textwidth}
\begin{tikzpicture}
  [scale=.7,auto=center,every node/.style={circle,fill=blue!20}]

\node[rectangle] (c1) at (5.5,3) {2};
\node[rectangle] (c7) at (8.5,4.5) {5};
  \node[rectangle] (c2) at (7,1.5) {1};
  \node[rectangle] (c4) at (7,3) {3};
   \node[rectangle] (c5) at (8.5,3) {4};
      \draw (c5) -- (c2);
      \draw (c1) -- (c2);
        \draw (c2) -- (c4);
         \draw (c5) -- (c7);

 \end{tikzpicture}

\end{minipage}

 \begin{remark}
{\rm By applying Theorem  \ref{semiequidual}(ii), we can observe that  the input disconnected subposet matrix $A[ \{2,3,4 \}]$ forms semi-equidual poset matrices at insertion points $2$ and $4,$ and we can observe that the submatrix  $D=A[ \{2,3,4 \}| \{1 \}]$ has all equal rows.}
 \end{remark}

\begin{example}
\end{example}
$\begin{blockarray}{ccccc}
1 & 2&3&4\\
\begin{block}{(cccc)c}
 1 & 0&0  &0&1\\
 0 & 1&0 &0&2\\
 1 & 1&1 &0&3 \\
 1&1&1&1&4\\
\end{block}
\end{blockarray}\scalebox{0.7}{$\square$}_1\;\;
\begin{blockarray}{ccc}
1 & 2 \\
\begin{block}{(cc)c}
 1 & 0&1 \\
 1 & 1&2\\
\end{block}
\end{blockarray}=
\begin{blockarray}{cccccc}
1&2 & 3 &4&5\\
\begin{block}{(ccccc)c}
 1 & 0 & 0&0&0 &1\\
 1 & 1 & 0&0& 0&2\\
 0 & 0 & 1&0 &0&3\\
 1&1&1&1&0&4\\
 1 & 1 & 1&1&1 &5\\
\end{block}
\end{blockarray}\quad\Leftrightarrow\quad$
\begin{minipage}[c]{.20\textwidth}
\begin{tikzpicture}
  [scale=.7,auto=center,every node/.style={circle,fill=blue!20}]

\node(d4) at (10,2) {5};
\node(d1) at (10,0.5) {4};
\node(d6) at (11,-0.3)  {2};
\node(d9) at (11,-1.5)  {1};
\node(d7) at (9,-0.3) {3};
\draw (d6) -- (d1);
\draw (d1) -- (d7);
\draw (d1) -- (d4);
\draw (d9) -- (d6);

 \end{tikzpicture}

\end{minipage}

$\begin{blockarray}{ccccc}
1 & 2&3&4\\
\begin{block}{(cccc)c}
 1 & 0&0  &0&1\\
 0 & 1&0 &0&2\\
 1 & 1&1 &0&3 \\
 1&1&1&1&4\\
\end{block}
\end{blockarray}\scalebox{0.7}{$\square$}_2\;\;
\begin{blockarray}{ccc}
1 & 2 \\
\begin{block}{(cc)c}
 1 & 0&1 \\
 1 & 1&2\\
\end{block}
\end{blockarray}=
\begin{blockarray}{cccccc}
1&2 & 3 &4&5\\
\begin{block}{(ccccc)c}
 1 & 0 & 0&0&0 &1\\
 0 & 1 & 0&0& 0&2\\
 0 & 1 & 1&0 &0&3\\
 1&1&1&1&0&4\\
 1 & 1 & 1&1&1 &5\\
\end{block}
\end{blockarray}\quad\Leftrightarrow\quad$
\begin{minipage}[c]{.20\textwidth}
\begin{tikzpicture}
  [scale=.7,auto=center,every node/.style={circle,fill=blue!20}]

\node(d4) at (10,2) {5};
\node(d1) at (10,0.5) {4};
\node(d6) at (11,-0.3)  {1};
\node(d7) at (9,-0.3) {3};
\node(d9) at (9,-1.5) {2};
\draw (d6) -- (d1);
\draw (d1) -- (d7);
\draw (d1) -- (d4);
\draw (d9) -- (d7);

 \end{tikzpicture}

\end{minipage}

 \begin{remark}
{\rm By applying Theorem  \ref{semiequidual}(i), we can observe that  the input disconnected subposet matrix $A[ \{1,2 \}]$ forms semi-equidual poset matrices at insertion points $1$ and $2,$ and we can observe that the submatrix  $D=A[ \{3,4 \}| \{1,2 \}]$ is comprised of equal columns.}
 \end{remark}

\end{document}